\newtheorem{headtheorem}{Theorem}
\newtheorem{headexample}{Example}
\DeclareMathOperator{\ann}{ann}
\DeclareMathOperator{\Coker}{Coker}
\DeclareMathOperator{\Cone}{Cone}
\DeclareMathOperator{\depth}{depth}
\DeclareMathOperator{\eend}{end}
\DeclareMathOperator{\Ext}{Ext}
\DeclareMathOperator{\Hom}{Hom}
\DeclareMathOperator{\Char}{char}
\DeclareMathOperator{\Image}{Image}
\DeclareMathOperator{\indeg}{indeg}
\DeclareMathOperator{\Ker}{Ker}
\DeclareMathOperator{\Proj}{Proj}
\DeclareMathOperator{\reg}{reg}
\DeclareMathOperator{\Specmax}{Specmax}
\DeclareMathOperator{\Supp}{Supp}
\DeclareMathOperator{\Tor}{Tor}
\renewcommand{\ge}{\geqslant}
\renewcommand{\le}{\leqslant}
\theoremstyle{plain}
\newtheorem{theorem}{Theorem}[section]
\newtheorem{lemma}[theorem]{Lemma}
\newtheorem{proposition}[theorem]{Proposition}
\theoremstyle{definition}
\newtheorem{definition}[theorem]{Definition}
\newtheorem{example}[theorem]{Example}
\newtheorem{hypothesis}[theorem]{Hypothesis}
\newtheorem{para}[theorem]{}
\newtheorem{question}[theorem]{Question}
\newtheorem{setup}[theorem]{Setup}
\theoremstyle{remark}
\newtheorem{remark}[theorem]{Remark}
\numberwithin{equation}{section}
\title{The (ir)regularity of Tor and Ext}
\author[M.~Chardin]{Marc Chardin}
\address{Institut de math\'ematiques de Jussieu, CNRS \& Sorbonne Universit\'e, 4 place Jussieu, 75005 Paris , France}
\email{marc.chardin@imj-prg.fr}
\author[D.~Ghosh]{Dipankar Ghosh}
\address{Department of Mathematics, Indian Institute of Technology Hyderabad, Kandi, Sangareddy, Telangana - 502285, India}
\email{dghosh@iith.ac.in, dipug23@gmail.com}
\author[N.~Nemati]{Navid Nemati}
\address{Institut de math\'ematiques de Jussieu, Sorbonne Universit\'e, 4 place Jussieu, 75005 Paris , France}
\email{navid.nemati@imj-prg.fr}
\date{May 6, 2019}
\subjclass[2010]{Primary 13D07,13D02} 
\keywords{Complete intersection rings; Castelnuovo-Mumford regularity; Asymptotic behavior; Tor; Ext; Eisenbud operators; Spectral sequences}
\begin{document}

\pagenumbering{arabic}
\thispagestyle{empty} 
\maketitle
  
 \begin{abstract}
 	We investigate the asymptotic behavior of Castelnuovo-Mumford regularity of Ext and Tor, with respect to the homological degree, over complete intersection rings. We derive from a theorem of Gulliksen a  linearity result for the regularity of Ext modules in high homological degrees. We show a  similar result for Tor, under the additional hypothesis that high enough Tor modules are supported in dimension at most one; we then provide examples showing that  the behavior could be pretty hectic when the latter condition is not satisfied.
 \end{abstract}

\section{Introduction}
	
	There has been a keen interest in understanding the behavior of $ \reg(I^n) $ as a function of $ n $, where $I$ is a homogeneous ideal in a polynomial ring $ Q = K[X_1,\ldots, X_d] $ over a field. Geramita, Gimigliano and Pitteloud \cite{GGP95} and Chandler \cite{Cha97} proved that if $ \dim(Q/I) \le 1 $, then $ \reg(I^n) \le n \cdot \reg(I) $ for all $ n \ge 1 $. This bound need not be true for higher dimension, due to an example of Sturmfels \cite{Stu00}. However, in \cite[Thm.~3.6]{Swa97}, Swanson showed that $ \reg(I^n) \le kn $ for all $ n \ge 1 $, where $ k $ is some constant. Thereafter, Cutkosky, Herzog and Trung \cite[Thm. 1.1]{CHT99} and Kodiyalam \cite{Kod00} independently showed that asymptotically $ \reg(I^n) $ is a linear function of $ n $. Later, in \cite[Thm. 3.2]{TW05}, Trung and Wang generalized this result over Noetherian standard graded ring. This behavior also has been studied for powers of more than one ideals in \cite{BCH13}, \cite{Gho16} and \cite{BC17}.
	
	One notices that $ \Tor_1^Q(Q/I^p, Q/I^q)=I^p/I^{p+q}$ if $p\geq q$, which relates this question to more general results for finitely generated graded $ Q $-modules $ M $ and $ N $. The following results are known in this case.

\begin{enumerate}[{\rm (1)}]
	\item \cite[Thm. 5.7]{Cha07} If $ \dim(\Tor_1^Q(M, N)) \le 1 $, then
	\[
		\max_{0 \le i \le d}\{ \reg\big( \Tor_i^Q(M, N) \big) - i \} = \reg(M) + \reg(N).
	\]
	This generalizes results of 
	Sidman \cite{Sid02}, Conca-Herzog \cite{CH03}, Caviglia \cite{Cav07} and Eisenbud-Huneke-Ulrich \cite[Cor.~3.1]{EHU06}. The equality in (1) extends to the case when $ Q $ is standard graded, and $ M $ or $ N $ has finite projective dimension, replacing the right hand side by $ \reg(M) + \reg(N) - \reg(Q) $.
	\item \cite[Thm. 3.2 and 4.6]{CD08} 
		\[
	\min_{0 \le i \le d}\{ \indeg (\Ext^i_Q(M,N))+i\} =\indeg (N)-\reg (M),
		\]
	 and if $ \dim (M \otimes_Q N) \le 1 $, then
	\[
	\max_{0 \le i \le d} \left\{ \reg\big( \Ext_Q^i(M,N) \big) + i \right\} = \reg(N) - \mathrm{indeg}(M),
	\]
	where $ \mathrm{indeg}(M) := \inf\{ n \in \mathbb{Z} : M_n \neq 0 \} $.
	\item \cite[ Thm.~2.4(2) and 3.5]{CHH11} An upper bound of $ \reg( \Ext_Q^i(M,N) ) + i $ is given in terms of certain invariants of $ M $ and $ N $.
\end{enumerate}

When working over standard graded algebras that are not regular ({\it i. e.} not a polynomial ring over a regular ring), one can also bound regularity of Tor modules under the same kind of hypothesis, for instance the following theorem, which follows along the same lines as in the proof of \cite[Thm.~5.17]{Cha07}.
	
\begin{theorem}\label{thm:Chardin}
	Suppose $ Q $ is a standard graded ring over a field, but $ Q $ is not a polynomial ring. Let $ M $ and $ N $ be finitely generated graded $ Q $-modules, and $ d := \min\{ \dim(M), \dim(N) \} $. If $ \dim\big( \Tor_i^Q(M, N) \big) \le 1 $ for all $ i \ge i_0 $, then
	\[
		\reg \left( \Tor_i^Q(M, N) \right) -i \le \reg(M) + \reg(N) + \left\lfloor \frac{i+d}{2} \right\rfloor (\reg(Q) -1),\ \forall i \ge i_0,
	\]
	{\rm (}and for $i=i_0-1$ if $i_0=1${\rm )}.
\end{theorem}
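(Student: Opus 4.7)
The plan is to adapt the argument from the proof of \cite[Thm.~5.17]{Cha07} to the non-regular setting, while tracking the extra contributions that appear since $\reg Q > 0$. By the symmetry of $\Tor$ assume $\dim N = d$. After possibly extending scalars to an infinite field (which preserves regularities of all relevant modules and of $Q$), choose linear forms $\ell_1,\ldots,\ell_d$ forming a system of parameters for $N$, and set $N_j := N/(\ell_1,\ldots,\ell_j)N$, with $N_0 = N$.

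The four-term exact sequences
$$0 \to (0:_{N_j}\ell_{j+1})(-1) \to N_j(-1) \xrightarrow{\ell_{j+1}} N_j \to N_{j+1} \to 0$$
split into two short exact sequences and, upon applying $\Tor_i^Q(M,-)$, yield long exact sequences from which one extracts a recursive inequality bounding $\reg\Tor_i^Q(M,N_{j+1})$ in terms of $\reg\Tor_i^Q(M,N_j)$, $\reg\Tor_{i-1}^Q(M,N_j)+1$, and an error term controlled by the regularity of the annihilator submodule $(0:_{N_j}\ell_{j+1})$. Because $\ell_{j+1}$ is generic, this annihilator is supported in dimension zero, so its regularity is bounded by $\reg N_j + (\reg Q - 1)$ using standard estimates on socle degrees of finite length modules over $Q$. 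Iterating these reductions $d$ times reaches a module $N_d$ of dimension zero whose regularity is controlled by $\reg N + d(\reg Q - 1)$. A parallel induction on $i$ then reduces the problem to the base case, where the hypothesis $\dim\Tor_i^Q(M,N)\le 1$ allows one to read off $\reg\Tor_i^Q(M,N)$ from $H^0_{\mathfrak{m}}$ and $H^1_{\mathfrak{m}}$ alone and bound these using the Grothendieck spectral sequence together with the hypotheses on $\reg M$ and $\reg N$.

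The main obstacle is the precise bookkeeping of the accumulated $\reg Q - 1$ contributions. Each of the $d$ dimension reductions adds one potential unit of $\reg Q - 1$ (from the torsion submodule), and each homological shift $i \leadsto i-1$ in the long exact sequence potentially adds another. The subtlety is that these two types of contribution do not stack additively but combine in a staggered pattern, because the long exact sequence advances $(i,j)$ by $(-1,+1)$ or $(0,+1)$ at each stage, and only one of these transitions actually picks up a $\reg Q - 1$ error at each step. Running the double induction on $(i,j)$ and splitting into even/odd cases according to the parity of $i+j$ produces the floor coefficient $\lfloor (i+d)/2 \rfloor$ in the final bound. The edge case $i = i_0 - 1 = 0$ is handled by applying the same estimates to $\Tor_0^Q(M,N) = M\otimes_Q N$, whose dimension is at most one under the stated hypothesis, so the local cohomology argument applies verbatim.
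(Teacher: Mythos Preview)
The paper does not actually prove this statement: it asserts only that the result ``follows along the same lines as in the proof of \cite[Thm.~5.17]{Cha07}.'' Since your proposal is likewise framed as an adaptation of that same argument, you and the paper are pointing to the same source, and in that sense you are aligned.

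That said, because you have filled in details the paper omits, a couple of them deserve scrutiny. First, for a generic linear form $\ell_{j+1}$ one has $(0:_{N_j}\ell_{j+1}) = H^0_{\mathfrak m}(N_j)$, whose regularity is at most $\reg N_j$; the extra $(\reg Q - 1)$ you attach to this term is not obviously warranted, so the mechanism by which $\reg Q - 1$ enters the estimate needs to be located elsewhere. In the argument of \cite{Cha07} the $\reg Q$ contribution typically comes from bounds on the maximal shifts $t_i^Q(M)$ in a minimal free resolution of $M$ over $Q$ (these shifts grow faster than $i + \reg M$ precisely when $Q$ is not a polynomial ring), combined with a spectral-sequence comparison of local cohomology of $\Tor$ with the resolution tensored by $N$; this input is absent from your sketch. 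Second, the ``staggered'' double-induction producing $\lfloor (i+d)/2\rfloor$ is asserted rather than shown, and this bookkeeping is the entire content of the bound. If you intend your write-up to stand on its own, you should make explicit the syzygy-shift estimate you are invoking and carry out the parity-split induction in full.
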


This implies that if $ \Proj(Q) $ has isolated singularities, then the estimate in Theorem~\ref{thm:Chardin} holds true for $ i \ge \dim(Q)-1 $.

Over complete intersection ring, the following result controls the asymptotic behavior with respect to both a power of an ideal and the homological degree.

\begin{theorem}\cite[Thm.~5.4]{GP18}\label{thm:Ghosh-Puthen}
	Set $ A := Q/({\bf f}) $, where $ Q $ is a polynomial ring over a field, and $ {\bf f} = f_1,\ldots,f_c $ is a homogeneous $ Q $-regular sequence. Let $ M $ and $ N $ be finitely generated graded $ A $-modules, and $ I $ be a homogeneous ideal of $ A $. Then, 
	\begin{enumerate}[{\rm (i)}]
		\item 
		$ 
		\reg\left( \Ext_A^{i}(M, I^nN) \right) \le \rho_N(I) \cdot n - w \cdot \left\lfloor \frac{i}{2} \right\rfloor + e,\quad \forall i, n \ge 0 ,
		$
		\item 
		$ 
		\reg\left( \Ext_A^{i}(M,N/I^nN) \right) \le \rho_N(I) \cdot n - w \cdot \left\lfloor \frac{i}{2} \right\rfloor + e',\quad \forall i, n \ge 0,
		$
	\end{enumerate}
	where $ e,e'\in \mathbb{Z}$, $ w := \min\{ \deg(f_j) : 1 \le j \le c \} $, and $ \rho_N(I) $ is an invariant defined in terms of reduction ideals of $ I $ with respect to $ N $.
\end{theorem}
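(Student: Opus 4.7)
The plan is to combine the Eisenbud--Gulliksen chi-operator structure on $\Ext_A^*(M,-)$ with a Rees-algebra / reduction argument to track the $I^n$-filtration, producing a joint finite generation statement that yields the stated regularity bounds in the homological and filtration degrees simultaneously.

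For part~(i), the first step is Gulliksen's finite generation theorem: for any finitely generated graded $A$-module $L$, the total Ext $\bigoplus_{i\ge 0}\Ext_A^i(M,L)$ is finitely generated over the polynomial extension $R:=A[\chi_1,\ldots,\chi_c]$, where each chi-operator $\chi_j$ has cohomological degree $2$ and internal degree $-\deg(f_j)$ (the sign is forced by the Eisenbud construction, since the periodic part of a resolution of $M$ over $A$ satisfies $\Omega^{i+2}\cong\Omega^i(-\deg f_j)$ in the hypersurface case, so $\chi_j$ lowers internal degree). Writing $w=\min_j\deg(f_j)$, a degree count on a finite set of Gulliksen generators of bidegrees $(a_k,b_k)$ gives, for each fixed $L$, a slope bound
\[
\reg\left(\Ext_A^i(M,L)\right)\le -w\lfloor i/2\rfloor + C(M,L),
\]
where $C(M,L)$ depends only on these bidegrees and $\reg(A)$.

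To incorporate the $n$-direction, choose a minimal reduction $J\subseteq I$ of $I$ with respect to $N$, so that $JI^nN=I^{n+1}N$ for $n\gg 0$ and $\bigoplus_n I^nN$ is finitely generated over the Rees algebra $\mathcal{R}(J)$. The invariant $\rho_N(I)$ is defined precisely to control the internal degrees of a generating set of $\bigoplus_n I^nN$ together with a regularity term from $N$. Assemble $\mathcal{T}:=\bigoplus_{i,n\ge 0}\Ext_A^i(M,I^nN)$; since the chi-operators are natural in the second argument of $\Ext$, the $R$-action commutes with the maps induced by $J$-multiplication $I^nN\to I^{n+1}N$, and $\mathcal{T}$ is a finitely generated trigraded module over $\mathcal{R}(J)\otimes_A R$. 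Combining the slope bound of the previous step (applied fiberwise in $n$) with Rees-algebra generator-degree estimates across $n$ then yields
\[
\reg\left(\Ext_A^i(M,I^nN)\right)\le \rho_N(I)\,n - w\lfloor i/2\rfloor + e,
\]
the constant $e$ absorbing contributions from $\reg(M)$, $\reg(N)$, $\reg(A)$, and the joint generator bidegrees.

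For part~(ii), apply $\Ext_A^*(M,-)$ to the short exact sequence $0\to I^nN\to N\to N/I^nN\to 0$ to get a long exact sequence whose middle piece reads
\[
\Ext_A^i(M,N)\to \Ext_A^i(M,N/I^nN)\to \Ext_A^{i+1}(M,I^nN).
\]
Standard regularity estimates for three-term sequences give $\reg(\Ext_A^i(M,N/I^nN))\le\max\{\reg(\Ext_A^i(M,N)),\reg(\Ext_A^{i+1}(M,I^nN))\}+1$. Apply part~(i) to the right term (noting $\lfloor(i+1)/2\rfloor\ge\lfloor i/2\rfloor$, so the homological shift costs at most $w/2$, absorbed into $e'$) and the Step-1 Gulliksen bound to the left term (independent of $n$) to obtain (ii). The main obstacle is the joint finite generation used in the third step: extending Gulliksen's chi-operator structure functorially across the Rees filtration so that the combined generator bidegrees produce precisely the coefficients $\rho_N(I)$ and $-w$ without loss. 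This likely requires a careful spectral sequence or double-complex argument relating the Shamash lifting of a $Q$-resolution of $M$ to the Rees algebra structure on $\bigoplus_n I^nN$, together with tight bookkeeping of internal degrees in the trigraded polynomial ring $\mathcal{R}(J)\otimes_A R$.
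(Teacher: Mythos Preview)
This theorem is quoted in the paper as \cite[Thm.~5.4]{GP18} and is not proved there; it appears in the introduction purely as background motivating Question~\ref{question2}. So there is no proof in the paper to compare against. That said, your outline for part~(i) is in the right spirit: in \cite{GP18} the argument does proceed by combining Gulliksen's finite generation of $\bigoplus_i\Ext_A^i(M,-)$ over $A[\chi_1,\ldots,\chi_c]$ with a Rees-type structure in the $n$-direction, and then reading off the slopes $-w$ and $\rho_N(I)$ from generator bidegrees of the resulting finitely generated trigraded module. You correctly identify the delicate point as the joint finite generation of $\bigoplus_{i,n}\Ext_A^i(M,I^nN)$ over $\mathcal{R}(J)[\chi_1,\ldots,\chi_c]$; this is indeed where the work lies, and your sketch does not supply it.

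Your derivation of (ii) from (i), however, has a genuine gap. From the long exact sequence one gets that $\Ext_A^i(M,N/I^nN)$ is an extension of a \emph{submodule} of $\Ext_A^{i+1}(M,I^nN)$ by a \emph{quotient} of $\Ext_A^i(M,N)$. There is no ``standard regularity estimate'' bounding the regularity of a submodule or quotient by that of the ambient module plus a constant; both can be arbitrarily larger. So the inequality
\[
\reg\big(\Ext_A^i(M,N/I^nN)\big)\le\max\big\{\reg(\Ext_A^i(M,N)),\,\reg(\Ext_A^{i+1}(M,I^nN))\big\}+1
\]
is unjustified. The actual proof of (ii) in \cite{GP18} does not deduce it from (i) this way; it treats $\bigoplus_{i,n}\Ext_A^i(M,N/I^nN)$ directly, again via a finite-generation statement over a suitable multigraded ring, parallel to (i) rather than as a consequence of it.
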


Moreover, in \cite[6.6]{GP18}, Ghosh and Puthenpurakal raised the following question. 

\begin{question}\label{question2}
	For $ \ell \in \{0,1\} $, do there exist $ a_\ell , a'_\ell \in \mathbb{Z}_{>0} $ and $ e_\ell , e'_\ell \in \mathbb{Z} \cup \{ -\infty \} $ such that
	\begin{enumerate}[{\rm (i)}]
		\item $ \reg\big(\Ext_A^{2i+\ell}(M,N)\big) = - a_\ell \cdot i + e_\ell $ for all $ i \gg 0 $ ?
		\item $ \reg\left(\Tor_{2i+\ell}^A(M,N)\right) = a'_\ell \cdot i + e'_\ell $ for all $ i \gg 0 $ ?
	\end{enumerate}
\end{question}

	In this text, we are addressing these questions. We prove that the answer to (i) is positive, even in a more general situation, while the answer to (ii) is negative. However, if $ \dim\left( \Tor_i^A(M, N) \right) \le 1 $ for all $ i \gg 0 $, the second question does have a positive answer.
	
	Our main positive result on these questions is the following:
	
	\begin{headtheorem}[Theorems~\ref{thm:main-Ext} and \ref{cor:lin-reg-Tor-dim-1}] \label{thm:main-Intro}
	Let $ Q $ be a standard graded Noetherian algebra, $ A := Q/({\bf f}) $, where $ {\bf f}  := f_1,\ldots,f_c $ is a homogeneous $ Q $-regular sequence. Let $ M $ and $ N $ be finitely generated graded $ A $-modules such that $ \Ext_Q^i(M,N) = 0 $ for all $ i \gg 0 $.
	
	Then, 
	\begin{enumerate}[{\rm (i)}]
	\item for every $ \ell \in \{0,1\} $, there exist $ a_{\ell} \in \{ \deg(f_j) : 1 \le j \le c \} $ and $ e_{\ell} \in \mathbb{Z} \cup \{ - \infty \} $ such that
	\[
		\reg \big( \Ext_A^{2i+\ell}(M,N) \big) = - a_{\ell} \cdot i + e_{\ell} \quad \mbox{for all } i \gg 0.
	\]
	\item if further $Q$ is *local or the epimorphic image of a Gorenstein ring, $M$ has finite projective dimension over $Q$ and 
	$$
	\dim\left( \Tor_i^A(M,N)\right)\le 1,\ \forall i \gg 0,
	$$
	then, for every $ \ell \in \{0,1\} $, there exist $ a'_\ell \in \{ \deg(f_j) : 1 \le j \le c \} $ and $ e'_\ell \in \mathbb{Z} \cup \{ - \infty \} $ such that	
	$$
	\reg\left( \Tor_{2i+\ell}^A(M,N) \right) = a'_\ell \cdot i + e'_\ell ,\ \forall i \gg 0.
	$$
	\end{enumerate}
\end{headtheorem}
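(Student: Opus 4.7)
The plan hinges on Gulliksen's classical theorem, which endows $E := \bigoplus_{i \ge 0} \Ext_A^i(M,N)$ with the structure of a finitely generated bigraded module over $S := A[\chi_1,\dots,\chi_c]$, where the Eisenbud operator $\chi_j$ has cohomological degree $2$ and internal degree $-\deg(f_j)$; the hypothesis $\Ext_Q^i(M,N) = 0$ for $i \gg 0$ is precisely what is needed to invoke this finite generation. I would separate $E$ into its even and odd cohomological parts, writing $E^\ell := \bigoplus_{i \ge 0} \Ext_A^{2i+\ell}(M,N)$ for $\ell \in \{0,1\}$; each is a finitely generated graded module over the polynomial subring $S' := A[\chi_1,\dots,\chi_c]$ viewed as singly graded with $\chi_j$ of homological degree $1$ and internal degree $-\deg(f_j)$.

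For part (i), the goal then reduces to proving an auxiliary statement of Kodiyalam / Cutkosky--Herzog--Trung type: for a finitely generated bigraded module $F$ over $A[y_1,\dots,y_c]$, where $y_j$ has $A$-internal degree $d_j > 0$ and homological degree $1$, the Castelnuovo--Mumford regularity $\reg(F_n)$ of the $n$-th homological piece is eventually an affine function of $n$ with slope belonging to $\{d_1,\dots,d_c\}$. Applied to $F = E^\ell$ with $d_j = \deg(f_j)$, this yields the \textrm{Ext} conclusion, the minus sign of the slope reflecting the negative internal degree of the Eisenbud operators. The standard route to this auxiliary statement is to reduce to cyclic modules via a prime filtration, then apply Noether normalization to identify the asymptotically relevant subset of the $y_j$'s and transfer the behavior from the subring back to $F$.

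For part (ii), I would invoke the analogous Gulliksen structure on $T := \bigoplus_{i \ge 0} \Tor_i^A(M,N)$, which is finitely generated over $A[\eta_1,\dots,\eta_c]$ with $\eta_j$ of homological degree $2$ and internal degree $+\deg(f_j)$; this finite generation is available once $\projdim_Q(M) < \infty$. Because $A$ is not regular, the regularity of the pieces of $T$ can a priori behave wildly (consistent with the authors' negative counterexample), and it is exactly the hypothesis $\dim(\Tor_i^A(M,N)) \le 1$ for $i \gg 0$ that rescues the situation: it reduces the computation of $\reg(\Tor_i^A(M,N))$ to the end degrees of the two local cohomology modules $H^0_{\mathfrak m}$ and $H^1_{\mathfrak m}$ of each $\Tor_i$. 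Using local duality, which is available thanks to the $*$local or epimorphic-image-of-Gorenstein hypothesis, these local cohomologies dualize to specific $\Ext$-modules over $A$ to which the linearity established in (i) can be applied. Tracking the internal-degree shifts through the duality gives the linearity of $\reg(\Tor_{2i+\ell}^A(M,N))$ with positive slope in $\{\deg(f_j)\}$.

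The principal obstacle is the auxiliary lemma underpinning (i): one must prove not merely quasi-linearity but that the slope is an integer from the prescribed set of variable degrees. This requires a careful analysis of the associated graded structure across the prime filtration, exploiting that only variables appearing in the associated primes of maximal relative dimension contribute asymptotically, and running the argument separately along each parity subsequence so that a single degree $\deg(f_j)$ emerges. For (ii), the delicate additional point is to set up the local-duality dictionary so that the $\Ext$-side linearity from (i) translates cleanly into $\Tor$-side linearity with matching slope set, while using the dimension hypothesis uniformly in $i$ (not only termwise) so that the comparison passes to the asymptotic regime.
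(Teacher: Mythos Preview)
Your plan for part (i) is essentially the paper's: Gulliksen's theorem gives finite generation of $\Ext_A^\star(M,N)$ over $A[\chi_1,\dots,\chi_c]$, one splits into even and odd parts, and then one invokes a linearity lemma for the regularity of the graded pieces of a finitely generated bigraded module. The only difference is cosmetic: the paper cites a result of Bagheri--Chardin--H\`a (their Proposition~\ref{prop:BCH}) describing the support of $\Tor^Q_l(L_{(t,*)},Q_0)$ explicitly, whereas you propose a Kodiyalam/CHT-style prime-filtration argument. Either route works.

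For part (ii), however, there is a genuine gap. Your opening move---``invoke the analogous Gulliksen structure on $T=\bigoplus_i\Tor_i^A(M,N)$, finitely generated with $\eta_j$ of homological degree $2$ and internal degree $+\deg(f_j)$''---misstates the direction of the Eisenbud operators on Tor. They act as $t_j:\Tor_{i+2}^A(M,N)\to\Tor_i^A(M,N)(-w_j)$, \emph{lowering} homological degree. Whatever finite-generation statement Gulliksen yields for $\Tor_\star$ is therefore in the opposite direction to what your auxiliary lemma needs: a finitely generated module over a ring with generators of bidegree $(-1,-w_j)$ tells you nothing about the asymptotics of $\reg(\Tor_i)$ as $i\to+\infty$. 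To get operators that \emph{raise} homological degree one must pass to Matlis duals of local cohomology, i.e.\ to $H^l_{\mathfrak m}(\Tor_i)^\vee\cong\Ext_B^{\dim B-l}(\Tor_i^A(M,N),\omega_B)$, exactly as the paper does in \ref{grad-mod-struc-Ext-Tor}. But these are \emph{not} automatically finitely generated over $A[y_1,\dots,y_c]$: the paper notes this explicitly after Theorem~\ref{thm:Gulliksen} and exhibits counterexamples in Remarks~\ref{rmk:fg-tor} and~\ref{rmk:never-fg-tor}. Your sentence ``these local cohomologies dualize to specific Ext-modules over $A$ to which the linearity established in (i) can be applied'' elides precisely this point: these are $\Ext_B^q(\Tor_i^A(M,N),\omega_B)$, not $\Ext_A^i(M,N)$, so neither part (i) nor your auxiliary lemma applies until you have established their finite generation over the ring of operators.

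That finite-generation step is the actual content of the paper's Theorem~\ref{thm:main}. It is proved by a double-complex/spectral-sequence argument: one compares the two spectral sequences of $\Hom_B(\mathbb{F}^M_p\otimes_A N,\mathbb{I}^q_{\omega_B})$. The vertical one has $E_2$-page $\Ext_A^p(M,\Ext_B^q(N,\omega_B))$, which is finitely generated over $A[y_1,\dots,y_c]$ by Gulliksen applied to \emph{Ext}; the horizontal one has $E_2$-page $\Ext_B^q(\Tor_p^A(M,N),\omega_B)$. The hypothesis $\dim(\Tor_i^A(M,N))\le 1$ forces the horizontal $E_2$ to be concentrated in two adjacent rows $q\in\{r-1,r\}$ for $p\gg0$, so the horizontal sequence degenerates and finite generation transfers from the total cohomology to $\Ext_B^q(\Tor_\star^A(M,N),\omega_B)$. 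Only after this does one localize at maximal ideals of $A_0$, use the exact sequence coming from $H^p_{\mathfrak m_0}(H^q_{A_+}(-))\Rightarrow H^{p+q}_{\mathfrak m}(-)$ (Lemma~\ref{lem:reg_and_regm}), and apply the linearity lemma to the resulting finitely generated bigraded modules. Your outline skips straight from ``$\dim\le 1$ means only $H^0_{\mathfrak m}$ and $H^1_{\mathfrak m}$ matter'' to ``apply (i)'', and that jump is where the real work lies.
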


On the negative side, we provide examples showing that the behavior of the regularity of Tor modules could be very different without the assumptions as in the result above.

\begin{headexample}[Example \ref{exam:reg-Tor-Ext-setup1}]\label{exam:reg-Tor-Ext-Ex1}
 	Let $ Q := K[Y,Z,V,W] $ be a polynomial ring with usual grading over a field $ K $, and $ A := Q/(Y^2,Z^2) $. Write $ A = K[y,z,v,w] $, where $ y, z, v $ and $ w $ are the residue classes of $ Y,Z,V $ and $ W $ respectively. Fix an integer $ m \ge 1 $. Set
 	\[
 	M := \Coker \left( \begin{bmatrix}
 	y & z & 0 & 0 \\
 	-v^m & -w^m & y & z
 	\end{bmatrix} : {\begin{array}{c}
 		A(-m)^2\\
 		\bigoplus \\
 		A(-1)^2
 		\end{array}} \longrightarrow {\begin{array}{c}
 		A(-m+1)\\
 		\bigoplus \\
 		A
 		\end{array}} \right)
 	\]
 	and $ N := A/(y,z) $. Then, for every $ i \ge 1 $, we have
 	\begin{enumerate}[{\rm (i)}]
 		\item $ \indeg\left( \Ext_A^i(M,N)  \right) = -i - m+1  $ and $ \reg\left( \Ext_A^i(M,N)  \right) = - i $.
 		\item $ \indeg\left( \Tor_i^A(M,N)  \right) = i $ and $ \reg\left( \Tor_i^A(M,N)  \right) = (m+1)i + (2m-2) $.		
 	\end{enumerate}
 \end{headexample}

	In this example, $\Tor_i^A(M,N)$  is supported in dimension 2 for $i\gg0$, its regularity is eventually linear, but the leading term depends on the module $M$ and could be arbitrarily large, opposite to the case where $\Tor_i^A(M,N)$  is supported in dimension 1 for $i\gg0$ -- in that case we showed the leading term would then be ${{2}\over{2} }=1$, as compared to $m+1$ here.
	
	This shows that the finiteness result for the $\Tor$-algebra that we prove under the condition that  $\Tor_i^A(M,N)$  is supported in dimension 1 for $i\gg0$ can fail if this hypothesis is removed. Additional results around the hypothesis on the asymptotic dimension of $\Tor$ are given in Remark \ref{rmk:thm:main-gen-version} and in Proposition \ref{prop:zero-two-loc-coh}.
	
	The following example that we develop in the last section shows that the eventual regularity of $\Tor$ could be very far from being linear,
	
	\begin{headexample}[Example~\ref{exam:reg-Tor-Ext-setup2}]
	Let $ Q := K[X,Y,Z,U,V,W] $ be a standard graded polynomial ring over a field $ K $ of characteristic $ 2 $, and $ A := Q/(X^2,Y^2,Z^2) $. We write $ A = K[x,y,z,u,v,w] $, where $ x,y,z,u,v $ and $ w $ are the residue classes of $ X,Y,Z,U,V $ and $ W $ respectively. Set
	\[
	M := \Coker \left( \begin{bmatrix}
	x & y & z & 0 & 0 & 0 \\
	u & v & w & x & y & z 
	\end{bmatrix} : A(-1)^6 \longrightarrow A^2 \right)
	\quad \mbox{and} \quad N:= A/(x,y,z).
	\]
	Then, for every $ n \ge 1 $, we have
	\begin{enumerate}[{\rm (i)}]
		\item $ \indeg\left( \Ext_A^n(M,N)  \right) =  \reg\left( \Ext_A^n(M,N)  \right) = - n $.
		\item $ \indeg\left( \Tor_n^A(M,N)  \right) = n $ and $ \reg\left( \Tor_n^A(M,N)  \right) = n+f(n) $, where
		\[
		f(n) := \left\{ \begin{array}{ll}
		2^{l+1} - 2 & \mbox{if } n = 2^l - 1 \\
		2^{l+1} - 1 & \mbox{if } 2^l \le n \le 2^{l+1} - 2
		\end{array}\right. \quad \mbox{for all integers $ l \ge 1 $}.
		\]
	\end{enumerate}
\end{headexample}

As a consequence, in this example,
\[ \{\reg(\Tor_{2n}^A(M,N))/(2n) : n \ge 1\} \quad \mbox{and} \quad \{\reg(\Tor_{2n+1}^A(M,N))/(2n+1) : n \ge 1\}
	\]
	are dense sets in $ [2,3] $ and 
	\[
		\liminf_{n \to \infty } \dfrac{\reg(\Tor_n^A(M,N))}{n} = 2 \quad \mbox{and} \quad \limsup_{n \to \infty } \dfrac{\reg(\Tor_n^A(M,N))}{n} = 3.
	\]
	
\section{Module structures on Ext  and Tor}\label{sec:module-structures-Ext-Tor}

Most of our results are proved under the following hypothesis.

\begin{hypothesis}\label{hyp:1}
	The ring $ Q $ is a standard graded Noetherian algebra, $ A = Q/({\bf f}) $, where $ {\bf f}  := f_1,\ldots,f_c $ is a homogeneous $ Q $-regular sequence with $ w_j = \deg(f_j) $, and $ M , N $ are finitely generated graded $ A $-modules such that $ \Ext_Q^i(M,N) = 0 $ for all $ i \gg 0 $.
\end{hypothesis}

\begin{para}\label{para:Matlis-duality}
	Write $ A = A_0[x_1,\ldots,x_d] $, where $ \deg(x_i) = 1 $ for $ 1 \le i \le d $. When $ (A_0,\mathfrak{m}_0) $ is local, then following the terminologies in \cite[pp.~141]{BH98}, $ A $ is *local, i.e., it has a unique maximal homogeneous ideal $\mathfrak{m}=\mathfrak{m}_0+A_+$. Setting $ E_0 := E_{A_0}(A_0/\mathfrak{m}_0) $, the Matlis dual of $ M $ is defined to be $ M^{\vee} := {\text *}\Hom_{A_0}(M,E_0) $, where $ \left( M^{\vee} \right)_n = \Hom_{A_0}(M_{-n},E_0) $ for every $ n \in \mathbb{Z} $. In view of \cite[Prop. 3.6.16 and Thm. 3.6.17]{BH98}, the contravariant functor $ (-)^{\vee} $ from the category of finitely generated graded $ A $-modules to itself is exact, and $ M^{\vee \vee} \cong M $.
\end{para}

\begin{para}[\bf Eisenbud operators]\label{Eisenbud-operators}
	We need to remind facts about Eisenbud operators \cite[Section~1]{Eis80} in the graded setup. By a {\it homogeneous homomorphism}, we mean a graded homomorphism of degree zero. Let $ \mathbb{F} : ~ \cdots \rightarrow F_n \rightarrow \cdots \rightarrow F_1\rightarrow F_0 \rightarrow 0 $ be a graded free resolution of $ M $ over $ A $. In view of the construction of Eisenbud operators \cite[pp.~39, (b)]{Eis80}, one may choose homogeneous $ A $-module homomorphisms $ t'_j : F_{i+2} \to F_i(-w_j) $ (for every $ i $) corresponding to $ f_j $.

	Thus the {\it Eisenbud operators} corresponding to $ {\bf f}  = f_1,\ldots,f_c $ are given by $ t'_j : \mathbb{F}[2] \rightarrow \mathbb{F}(-w_j) $, $ 1 \le j \le c $, where $ [-] $ and $ (-) $ denote respectively shift in homological degree and internal degree.
\end{para}

\begin{para}[\bf Graded module structures on Ext and Tor]\label{grad-mod-struc-Ext-Tor}
	The homogeneous chain maps $t'_j$ are determined uniquely up to homotopy; see \cite[Cor. 1.4]{Eis80}. Therefore the maps
	\begin{align*}
	\Hom_A(t'_j,N) & : \Hom_A(\mathbb{F}(-w_j),N) \longrightarrow \Hom_A(\mathbb{F}[2],N) \\
	t'_j \otimes_A 1_N & : \mathbb{F}[2] \otimes_A N \longrightarrow \mathbb{F}(-w_j) \otimes_A N
	\end{align*}
	induce well-defined homogeneous $ A $-module homomorphisms
	\begin{align}
		s_j & : \Ext_A^i(M,N) \longrightarrow \Ext_A^{i+2}(M,N)(-w_j) \quad \mbox{for all $ i $ and $ 1 \le j \le c $}, \label{Eis-operators-on-ext}\\
		t_j & : \Tor_{i+2}^A(M,N) \longrightarrow \Tor_i^A(M,N)(-w_j) \quad \mbox{for all $ i $ and $ 1 \le j \le c $}. \label{Eis-operators-on-tor}
	\end{align}
	Hence, for every $ l \ge 0 $, applying the functors $ H_{A_+}^l(-) $ and $ (-)^{\vee} $ successively on \eqref{Eis-operators-on-tor}, one obtains the homogeneous $ A $-module homomorphisms
	\begin{equation}\label{Eis-operators-on-tor-plus-coh}
		^+{t_j^l} := H_{A_+}^l(t_j)^{\vee} : H_{A_+}^l \hspace{-0.1cm} \left( \Tor_i^A(M,N) \right)\hspace{-0.1cm}^{\vee} \longrightarrow H_{A_+}^l \hspace{-0.1cm} \left( \Tor_{i+2}^A(M,N) \right)\hspace{-0.1cm}^{\vee}(-w_j)
	\end{equation}
	\begin{equation}\label{Eis-operators-on-tor-loc-coh}
		^\mathfrak{m}{t_j^l} := H_{\mathfrak{m}}^l(t_j)^{\vee} : H_{\mathfrak{m}}^l \hspace{-0.1cm} \left( \Tor_i^A(M,N) \right)\hspace{-0.1cm}^{\vee} \longrightarrow H_{\mathfrak{m}}^l \hspace{-0.1cm} \left( \Tor_{i+2}^A(M,N) \right)\hspace{-0.1cm}^{\vee}(-w_j)
	\end{equation}
	for all $ i $ and $ 1 \le j \le c $. These coincide whenever $A_0$ is artinian.
	By \cite[Cor. 1.5]{Eis80}, since the chain maps $ t'_j $ $ ( 1 \le j \le c ) $ commute up to homotopy,
	$$
	 \Ext_A^{\star}(M,N) ,
	\ H_{A_+}^l \hspace{-0.1cm} \left( \Tor_{\star}^A(M,N)\right) \hspace{-0.1cm}^{\vee} 
	\ {\rm and}\ H_{\mathfrak{m}}^l \hspace{-0.1cm} \left( \Tor_{\star}^A(M,N)\right) \hspace{-0.1cm}^{\vee} 
	$$
	turn into graded $ T := A[y_1,\ldots,y_c] $-modules, where $ T $ is a graded polynomial ring  over $ A $ with $ \deg(y_j) = 2 $ for $ 1 \le j \le c $. The actions of $ y_j $ on these three graded $T$-modules are defined by the maps $ s_j $,  $ ^+{t_j^l} $ and $ ^\mathfrak{m}{t_j^l} $, respectively.

	These structures depend only on $ {\bf f} $, are natural in both module arguments and commute with the connecting maps induced by short exact sequences.
	
	Choosing a graded epimorphism $B\rightarrow Q$, such that $B$ is *local and Cohen-Macaulay of dimension $b$, with canonical module $\omega_B$, local duality provides a commutative diagram,
	$$
	\xymatrix{
	H_{\mathfrak{m}}^l \hspace{-0.1cm} \left( \Tor_i^A(M,N) \right)\hspace{-0.1cm}^{\vee} \ar^(.45){^\mathfrak{m}{t_j^l}}[rr] \ar^{\simeq}[d]&&
	H_{\mathfrak{m}}^l \hspace{-0.1cm} \left( \Tor_{i+2}^A(M,N) \right)\hspace{-0.1cm}^{\vee}(-w_j)\ar^{\simeq}[d]\\
	\Ext^{b-l}_B\left(\Tor_i^A(M,N),\omega_B\right) \ar^(.44){\Ext_B^{b-l}(t_j,\omega_B)}[rr] &&\Ext^{b-l}_B\left(\Tor_{i+2}^A(M,N),\omega_B\right)(-w_j) \\}
	$$
	where the map on the top row identifies to the one in \ref{Eis-operators-on-tor-plus-coh}, whenever $A_0$ is artinian.
\end{para}

\begin{theorem}\cite[Thm. 3.1]{Gul74}\label{thm:Gulliksen}
	The graded module $ \Ext_A^{\star}(M,N) $ is finitely generated over $ A[y_1,\ldots,y_c] $ provided $ \Ext_Q^i(M,N) = 0 $ for all $ i \gg 0 $.
\end{theorem}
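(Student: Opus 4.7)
The plan is to deduce finite generation of $E^\star := \Ext_A^\star(M,N)$ over $R := A[y_1,\ldots,y_c]$—with $y_j$ acting through the Eisenbud operators $s_j$ of \ref{grad-mod-struc-Ext-Tor}—from graded Nakayama applied to the cohomological grading on $R$. Each $E^i$ is a finitely generated graded $A$-module (as $M$ admits a graded $A$-free resolution with finitely generated components), so it suffices to show that
\[
	E^\star / (y_1,\ldots,y_c)E^\star
\]
is concentrated in finitely many cohomological degrees.

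The comparison I would exploit is the Cartan--Eilenberg change-of-rings spectral sequence
\[
	E_2^{p,q} = \Ext_A^p\bigl(M,\,\Ext_Q^q(A,N)\bigr) \;\Longrightarrow\; \Ext_Q^{p+q}(M,N).
\]
Since $\mathbf{f}$ is $Q$-regular, $K(\mathbf{f};Q)\to A$ is a free $Q$-resolution, and as $\mathbf{f}\cdot N = 0$, one obtains
\[
\Ext_Q^q(A,N) \cong \bigoplus_{1\le i_1<\cdots<i_q\le c} N(w_{i_1}+\cdots+w_{i_q}).
\]
Hence each $E_2^{p,q}$ is a direct sum of shifted copies of $E^p$, confined to the strip $0\le q\le c$. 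The key technical step is the identification, up to sign, of the $d_2$-differential with the Koszul differential on $(y_1,\ldots,y_c)$ acting on $E^\star$ via the Eisenbud operators; once this is granted, the $E_3$-page is the Koszul homology $H_\star(y_1,\ldots,y_c;E^\star)$, so $E_3^{p,0}$ is the cohomological-degree-$p$ piece of $E^\star/(y_1,\ldots,y_c)E^\star$.

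Since the spectral sequence stabilises at $E_{c+2}$ by the strip constraint and $E_\infty^{p,q}$ vanishes for $p+q$ large by hypothesis, one concludes that $E_3^{p,0}=0$ for $p\gg 0$ and applies graded Nakayama. The hard part is twofold: identifying $d_2$ with the Koszul differential (which requires opening the Cartan--Eilenberg double complex and matching the boundary maps with the chain-map construction of \ref{Eisenbud-operators} and its $Q$-lift $\tilde{\mathbb{F}}$ satisfying $\tilde d^{\,2} = \sum_j f_j \tilde t'_j$), and controlling the higher differentials $d_r$ for $r\ge 3$ entering the bottom row, so that vanishing of $E_\infty^{p,0}$ really implies vanishing of $E_3^{p,0}$. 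Gulliksen's original argument handles both at once by building an $A$-free resolution of $M$ as a DG module over the Koszul--Tate algebra $Q\langle T_1,\ldots,T_c\mid dT_j=f_j\rangle$, on which the polynomial action of $R$ is built in from the outset.
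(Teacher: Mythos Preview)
The paper does not prove this statement; Theorem~\ref{thm:Gulliksen} is quoted verbatim from Gulliksen's 1974 paper and used as a black box throughout (see the one-line proof of Proposition~\ref{prop:Gulliksen-cor} and the appeal in the proof of Theorem~\ref{thm:main}). So there is no ``paper's own proof'' to compare your attempt against.

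That said, your sketch is a recognisable outline of the Avramov--Gulliksen circle of ideas, and you have been honest about where the work lies. One point deserves emphasis: in the cohomological change-of-rings spectral sequence the differentials $d_r$ for $r\ge 2$ land \emph{into} the bottom row $q=0$ (they go $E_r^{p-r,r-1}\to E_r^{p,0}$), so $E_\infty^{p,0}$ is a \emph{quotient} of $E_3^{p,0}$, not a submodule. Hence vanishing of $E_\infty^{p,0}$ for $p\gg 0$ does not by itself force $E_3^{p,0}=0$; one needs an additional argument (e.g.\ descending induction on $q$ using that each $E_3^{p,q}$ is Koszul homology $H_{c-q}(y_1,\ldots,y_c;E^\star)$ and that vanishing of $H_0$ in high degrees propagates upward, or a filtration argument on the total complex). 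You flag this as ``the hard part,'' which is accurate, but as written the deduction ``$E_\infty^{p,0}=0\Rightarrow E_3^{p,0}=0$'' is a gap, not merely a difficulty. Gulliksen's original DG-algebra construction sidesteps this entirely, as you note in your final sentence.
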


For instance, when $ Q $ is a polynomial ring over a field, $ \Ext_A^{\star}(M,N) $ is finitely generated over $ A[y_1,\ldots,y_c] $, but $ H_{A_+}^l \hspace{-0.1cm} \left( \Tor_{\star}^A(M,N)\right) \hspace{-0.1cm}^{\vee} $ is not necessarily finitely generated by Remark~\ref{rmk:fg-tor}. Nevertheless, we prove that if $ \dim(\Tor_i^A(M,N)) \le 1 $ for all $ i \gg 0 $, then the modules $ H_{\mathfrak{m}}^l \hspace{-0.1cm} \left( \Tor_{\star}^A(M,N)\right) \hspace{-0.1cm}^{\vee} $ are finitely generated over $ A[y_1,\ldots,y_c] $; see Theorem~\ref{thm:main}. In order to prove our results, we use the canonical bigraded structures on these graded modules.

\begin{para}[\bf Bigraded structures]\label{bigrad-mod-struc-Ext-Tor}
	We make $ T = A[y_1,\ldots,y_c] $ a $ \mathbb{Z}^2 $-graded ring as follows. Write
	\begin{equation}\label{Z2-graded-ring}
		T = A[y_1,\ldots,y_c] = A_0[x_1,\ldots,x_d,y_1,\ldots,y_c],
	\end{equation}
	and set $ \deg(x_i) = (0,1) $ for $ 1 \le i \le d $ and $ \deg(y_j) = (2,-w_j) $ for $ 1 \le j \le c $. We give $ \mathbb{Z}^2 $-grading structures on $ E^\star :=\Ext_A^{\star}(M,N) $, $ ^+{D^l_\star}:=H_{A_+}^l \hspace{-0.1cm} \left( \Tor_{\star}^A(M,N)\right) \hspace{-0.1cm}^{\vee} $  
	and $ ^\mathfrak{m}{D^l_\star}:=H_{\mathfrak{m}}^l \hspace{-0.1cm} \left( \Tor_{\star}^A(M,N)\right) \hspace{-0.1cm}^{\vee} $ by setting their $ (i,a) $th graded components as the $ a $th graded components of $ \mathbb{Z} $-graded modules $ \Ext_A^i(M,N) $, $H_{A_+}^l \hspace{-0.1cm} \left( \Tor_i^A(M,N)\right) \hspace{-0.1cm}^{\vee} $ and $ H_{\mathfrak{m}}^l \hspace{-0.1cm} \left( \Tor_i^A(M,N)\right) \hspace{-0.1cm}^{\vee} $ respectively, for $ (i,a) \in \mathbb{Z}^2 $. Hence, in view of Section~\ref{grad-mod-struc-Ext-Tor},  $ E^\star$ $ ^+{D^l_\star}$ and  $^\mathfrak{m}{D^l_\star}$ are $ \mathbb{Z}^2 $-graded $ T $-modules. We consider the graded submodules corresponding to direct sums of even and odd components :
	\begin{align}
		E^{2\star} := \bigoplus_{i \in \mathbb{Z}} \Ext_A^{2i}(M,N),\quad E^{2\star +1} := \bigoplus_{i \in \mathbb{Z}} \Ext_A^{2i+1}(M,N),
		\label{even-odd-ext-mods}
	\end{align}
	that we will also refer to as  $ \Ext_A^{\rm even}(M,N) $ and $ \Ext_A^{\rm odd}(M,N) $, respectively,  depending on the context.
Similarly, one defines
	\begin{align}
^+{D^l_{2\star}}, \; ^+{D^l_{2\star +1}},  \; ^\mathfrak{m}{D^l_{2\star}},  \; ^\mathfrak{m}{D^l_{2\star +1}}
 \label{odd-loc-coh-tor-mods}
	\end{align}
by taking direct sums over even or odd homological degree components.

	In view of \eqref{Z2-graded-ring}, set a polynomial ring $ S := Q_0[X_1,\ldots,X_d,Y_1,\ldots,Y_c] $, where $ \deg(X_i) = (0,1) $ for $ 1 \le i \le d $ and $ \deg(Y_j) = (1,-w_j) $ for $ 1 \le j \le c $. The modules stated in \eqref{even-odd-ext-mods} and \eqref{odd-loc-coh-tor-mods} are canonically $ \mathbb{Z}^2 $-graded $ S $-modules. For instance, the $ (i,a) $th graded component of $E^{2\star}$ is defined to be $ \Ext_A^{2i}(M,N)_a $ for $ (i,a) \in \mathbb{Z}^2 $, while the actions of $ X_1,\ldots,X_d,Y_1,\ldots,Y_c $ on $ E^{2\star}$ are defined by $ x_1,\ldots,x_d,y_1,\ldots,y_c $ respectively. Note that $ Y_j \cdot \Ext_A^{2i}(M,N) \subseteq \Ext_A^{2(i+1)}(M,N) $ for $ i \in \mathbb{Z} $ and $ 1 \le j \le c $.
\end{para}

Thus, in bigraded setup, we have the following result on Ext modules.

\begin{proposition}\label{prop:Gulliksen-cor}
	If $ \Ext_Q^i(M,N) = 0 $ for all $ i \gg 0 $, then $ \Ext_A^{\rm even}(M,N) $ and $ \Ext_A^{\rm odd}(M,N) $ are finitely generated $ \mathbb{Z}^2 $-graded over $ S = Q_0[X_1,\ldots,X_d,Y_1,\ldots,Y_c] $, where $ \deg(X_l) = (0,1) $ for $ 1 \le l \le d $ and $ \deg(Y_j) = (1,-w_j) $ for $ 1 \le j \le c $. 
	\end{proposition}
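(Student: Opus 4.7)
The plan is to deduce this essentially as a formal consequence of Gulliksen's theorem (Theorem~\ref{thm:Gulliksen}), with the main content being a careful translation of bigradings. First, I would apply Theorem~\ref{thm:Gulliksen} to obtain a finite set of homogeneous generators of $\Ext_A^\star(M,N)$ as a module over $T = A[y_1,\ldots,y_c]$. The key observation is that each $y_j$ shifts homological degree by $2$, an even number, so the $T$-action preserves the parity of the homological degree. Consequently, $\Ext_A^{\rm even}(M,N)$ and $\Ext_A^{\rm odd}(M,N)$ are $T$-submodules, and the Gulliksen generators, sorted by the parity of their homological degrees, give finite generating sets for the even and the odd parts separately.

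Next I would reconcile the bigrading on $T$ (where $\deg(y_j)=(2,-w_j)$) with the bigrading on $S$ (where $\deg(Y_j) = (1,-w_j)$). The discrepancy is exactly a halving of the first component, which is precisely the effect of replacing the indexing $2i$ (resp.\ $2i+1$) on the even (resp.\ odd) part by $i$, as in the definitions in \ref{bigrad-mod-struc-Ext-Tor}. After this reindexing, multiplication by $y_j$ on $\Ext_A^{\rm even}(M,N)$ (resp.\ $\Ext_A^{\rm odd}(M,N)$) sends the $(i,a)$-component to the $(i+1, a-w_j)$-component, matching $\deg(Y_j) = (1,-w_j)$, and multiplication by $x_i$ has bidegree $(0,1)$, matching $\deg(X_i)$.

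To conclude, I would observe that $T = A_0[x_1,\ldots,x_d,y_1,\ldots,y_c]$ and $S = Q_0[X_1,\ldots,X_d,Y_1,\ldots,Y_c]$ coincide abstractly once one sends $X_i \mapsto x_i$ and $Y_j \mapsto y_j$ and uses the canonical map $Q_0 \to A_0$ (this map is the identity when $w_j > 0$ for every $j$, and otherwise it is a surjection through which the $S$-action on the Ext modules factors). With the reindexing above, the $T$-module structure on each parity part transports to an $S$-module structure with precisely the required bidegrees, and the Gulliksen generators (split by parity) become $S$-generators.

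I do not anticipate a genuine obstacle here; the entire argument is Gulliksen's finiteness theorem combined with the fact that an action by elements of even homological degree respects the parity decomposition, the remainder being the bookkeeping required to match the two $\mathbb{Z}^2$-gradings. The only point that needs care is confirming that the reindexing transforms the bidegrees of the $y_j$ as claimed, and that the coefficient ring can be taken to be $Q_0$ (via the factorization through $A_0$) rather than $A_0$ itself.
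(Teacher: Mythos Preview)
Your proposal is correct and follows essentially the same approach as the paper's proof: apply Gulliksen's theorem to get finite generation over $T$, observe that the even and odd parts are $T$-submodules (since each $y_j$ has even homological degree), and then translate the bigrading from $T$ to $S$ via the reindexing described in \ref{bigrad-mod-struc-Ext-Tor}. The paper's proof is terser---it simply says the even and odd parts are graded submodules and that ``we are only extending the grading''---but your more explicit bookkeeping (the halving of the first grading component and the factorization through $Q_0 \to A_0$) spells out exactly what the paper leaves implicit.
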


Recall that for every $ i,a \in \mathbb{Z} $,
	\[
		\Ext_A^{\rm even}(M,N)_{(i,a)} = \Ext_A^{2i}(M,N)_a \quad \mbox{and} \quad \Ext_A^{\rm odd}(M,N)_{(i,a)} = \Ext_A^{2i+1}(M,N)_a,
	\]
	where $ L_{(i,*)} := \bigoplus_{a \in \mathbb{Z}} L_{(i,a)} $ for a $ \mathbb{Z}^2 $-graded $ S $-module $ L $.

\begin{proof}
	By virtue of Theorem~\ref{thm:Gulliksen}, $ \Ext_A^{\star}(M,N) $ is a finitely generated graded module over $ T = A[y_1,\ldots,y_c] $. Therefore the graded submodules $ \Ext_A^{\rm even}(M,N) $ and $ \Ext_A^{\rm odd}(M,N) $ are also finitely generated. Since we are only extending the grading, the proposition now follows from \ref{bigrad-mod-struc-Ext-Tor}.
\end{proof}

\section{Linearity of regularity of Ext and Tor}\label{sec:lin-reg-ext-tor}
	
	In this section, we show that $ \reg( \Ext_A^{2i}(M,N) ) $ and $ \reg( \Ext_A^{2i+1}(M,N) ) $ are asymptotically linear in $ i $, where $ M $ and $ N $ are finitely generated graded modules over a graded complete intersection ring $ A $. Moreover, a similar result for Tor modules is proved when $ \dim(\Tor_i^A(M,N)) \le 1 $ for all $ i \gg 0 $. We use the following result, which is a consequence of a theorem due to Bagheri, Chardin and H\`a.
	
\begin{proposition}\cite[Thm.~4.6]{BCH13}\label{prop:BCH}
	Let $ Q_0 $ be a commutative Noetherian ring. Set $ R := Q_0 [X_1,\ldots,X_d,Z_1,\ldots,Z_c] $, where $ \deg(X_i) = (0,1) $ for $ 1 \le i \le d $ and $ \deg(Z_j) = (1,g_j) $ for some $ g_j \in \mathbb{Z} $, $ 1 \le j \le c $. Let $ L  $ be a finitely generated $ \mathbb{Z}^2 $-graded $ R $-module. Set $ Q := Q_0[X_1,\ldots,X_d] $, where $ \deg(X_i) = 1 $ for $ 1 \le i \le d $.
	
	Then, for every $ l \ge 0 $, there exist $ a_l, a'_l \in \{ g_j : 1 \le j \le c \} $, $ e_l \in \mathbb{Z} \cup \{ - \infty \} $ and $ e'_l \in \mathbb{Z} \cup \{ + \infty \} $ such that
	\begin{align}
		\eend\big( \Tor_l^Q(L_{(t,*)},Q_0) \big) & = t \cdot a_ l+ e_l \quad \mbox{for all } t \gg 0, \label{end-tor-lin}\\
		\indeg\big( \Tor_l^Q(L_{(t,*)},Q_0) \big) & = t \cdot a'_l + e'_l \quad \mbox{for all } t \gg 0,\label{indeg-tor-lin}
	\end{align}
	where $ \mathrm{end}(M) := \sup\{ n \in \mathbb{Z} : M_n \neq 0 \} $ and $ \mathrm{indeg}(M) := \inf\{ n \in \mathbb{Z} : M_n \neq 0 \} $ for a graded $ Q $-module $ M $.
	Hence, there exist $ a, a' \in \{ g_j : 1 \le j \le c \} $, $ e \in \mathbb{Z} \cup \{ - \infty \} $ and $ e' \in \mathbb{Z} \cup \{ + \infty \} $ such that
	\begin{align*}
		& \reg\left( L_{(t,*)} \right) = \sup\left\{ \eend\left( \Tor_l^Q(L_{(t,*)},Q_0) \right) - l : 0 \le l \le d \right\} = t \cdot a + e \mbox{ for all } t \gg 0,\\
		& \indeg\left( L_{(t,*)} \right) = \indeg\big( \Tor_0^Q(L_{(t,*)},Q_0) \big) = t \cdot a' + e' \mbox{ for all } t \gg 0.
	\end{align*}
\end{proposition}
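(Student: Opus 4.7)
The plan is to deduce all four assertions from \cite[Thm.~4.6]{BCH13}, which provides the eventual linearity in $t$ of the end-degree and initial-degree of $\Tor_l^Q$ along slices $L_{(t,*)}$ of finitely generated bigraded $R$-modules whose bidegrees take the shape considered here. Applying that theorem directly to $L$ yields \eqref{end-tor-lin} and \eqref{indeg-tor-lin}: for each $l \ge 0$, the asymptotic slopes $a_l, a'_l$ are drawn from the finite set $\{g_j : 1 \le j \le c\}$ and the intercepts $e_l, e'_l$ lie in $\mathbb{Z}\cup\{-\infty\}$ (resp.\ $\mathbb{Z}\cup\{+\infty\}$), the infinite value being the convention used when the corresponding Tor vanishes for $t \gg 0$.

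To deduce the eventual linearity of $\reg(L_{(t,*)})$ from \eqref{end-tor-lin}, I would invoke the standard identity
\[
\reg\big(L_{(t,*)}\big) = \max_{0 \le l \le d}\big\{\eend\big(\Tor_l^Q(L_{(t,*)},Q_0)\big) - l\big\},
\]
valid for any finitely generated graded module over the standard graded polynomial ring $Q = Q_0[X_1,\ldots,X_d]$. Each of the finitely many functions $t \mapsto \eend(\Tor_l^Q(L_{(t,*)},Q_0)) - l$ is eventually of the form $t \cdot a_l + (e_l - l)$ with $a_l \in \{g_j\} \cup \{-\infty\}$, and the pointwise maximum of finitely many such eventually linear functions is again eventually linear in $t$: for $t \gg 0$ its slope is $a := \max\{a_l : e_l \ne -\infty\}$ (lying in $\{g_j\}$ unless all the Tor modules eventually vanish, in which case $a = -\infty$) and its intercept is some integer $e$ (again $-\infty$ in the trivial case). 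The initial-degree statement for $L_{(t,*)}$ is then the $l=0$ case of \eqref{indeg-tor-lin}, using that $\indeg(L_{(t,*)}) = \indeg(\Tor_0^Q(L_{(t,*)},Q_0))$, which is the standard fact that the initial degree of a finitely generated graded module equals the initial degree of its minimal generating set.

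The main obstacle, modest here given that the substance of the asymptotic behavior is furnished by \cite[Thm.~4.6]{BCH13}, is to verify that the bigrading conventions on $R$ match those required by that theorem and to propagate the possible values $\pm\infty$ correctly through the maximum in the regularity formula. No further structural reduction appears to be necessary; an alternative packaging would go through the bigraded identification $\Tor^R_l(R/(X_1,\ldots,X_d),L)_{(t,*)} \cong \Tor_l^Q(L_{(t,*)},Q_0)$ arising from the Koszul complex of $X_1,\ldots,X_d$, which is concentrated in first bidegree zero and thus commutes with the slicing operation before one invokes BCH.
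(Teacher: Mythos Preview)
Your proposal is correct and follows essentially the same route as the paper. Both arguments rest on \cite[Thm.~4.6]{BCH13} for the eventual linearity of $\eend$ and $\indeg$ of the Tor slices, and then pass to $\reg(L_{(t,*)})$ by taking the maximum of finitely many eventually linear functions. The only difference is one of packaging: the paper does not cite the linearity as a black box but rather notes that the \emph{proof} of \cite[Thm.~4.6]{BCH13} (stated there for modules of the form $M\mathcal{R}$) carries over verbatim with $L$ in place of $M\mathcal{R}$, yielding the explicit description
\[
\Supp_{\mathbb{Z}}\big(\Tor_l^Q(L_{(t,*)},Q_0)\big)=\bigcup_{p=1}^{m}\Big(\delta_p^l+\bigcup_{\substack{c_1\in\mathbb{Z}_{\ge 0}^{|E_{p,1}^l|}\\ |c_1|=t-t_{p,1}^l}} c_1\cdot E_{p,1}^l\Big)
\]
for $t\gg 0$, from which $a_l,a'_l,e_l,e'_l$ are read off as the extremal elements and corresponding shifts. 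Your treatment of the last part (max of linear functions, and $\indeg$ from $l=0$) is slightly more explicit than the paper's one-line ``choosing suitable $a,a',e,e'$'', but the content is the same.
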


\begin{proof}
	The same proof as of \cite[Thm.~4.6]{BCH13} works if one considers $ L $ in place of $ M\mathcal{R} $. We use notations as in this reference, in particular $\Supp_{\mathbb{Z}}(M)=\{ \mu\in \mathbb{Z}\ \vert\ M_\mu\not= 0\}$ and for two tuples $c=(c_1,\ldots ,c_s)\in \mathbb{Z}_{\ge 0}^s$ and $E=(e_1,\ldots ,e_s)$ with $e_i$ in an abelian group ($\mathbb{Z}$-module), $|c|:=c_1+\cdots +c_s$,  $|E|=s$ and $c.E:=c_1e_1+\cdots +c_s e_s$.

	As in \cite[Thm.~4.6]{BCH13}, there exist a finite collection of integers $ \{ \delta_p^l, t_{p,1}^l : 1 \le p \le m \} $, and tuples $ E_{p,1}^l $ of elements in $ \Gamma_1 = \{ g_j : 1 \le j \le c \} $ such that $ \Delta E_{p,1}^l $ is linearly independent for every $ 1 \le p \le m $, satisfying:
	\begin{equation}\label{betti-shape}
		\Supp_{\mathbb{Z}} \left( \Tor_l^Q(L_{(t,*)},Q_0) \right) = \bigcup_{p = 1}^m \Big( \delta_p^l + \bigcup_{c_1 \in \mathbb{Z}_{\ge 0}^{|E_{p,1}^l|}, |c_1| = t - t_{p,1}^l} c_1 . E_{p,1}^l \Big)
	\end{equation}
	for all $ t \ge \max_p\{ t_{p,1}^l \} $, where $ \Delta E_{p,1}^l = ( h_2 - h_1, \ldots, h_r - h_{r-1} ) $ if $ E_{p,1}^l = (h_1,\ldots,h_r)$ and $r\geq 2$, and else empty. So the cardinality of each $ E_{p,1}^l $ must be at most $ 2 $.
	It can be observed that the equalities \eqref{end-tor-lin} and \eqref{indeg-tor-lin} follow from \eqref{betti-shape} once we set
	\begin{align*}
		a_l & := \max\{ h : h \in E_{p,1}^l, 1 \le p \le m \},\quad a'_l := \min\{ h : h \in E_{p,1}^l, 1 \le p \le m \},\\
		e_l & := \max\{ \delta_p^l - a_l \cdot t_{p,1}^l : 1 \le p \le m \mbox{ for which } a_l \in E_{p,1}^l \} \quad \mbox{and} \\
		e'_l & := \min\{ \delta_p^l - a_l \cdot t_{p,1}^l : 1 \le p \le m \mbox{ for which } a_l \in E_{p,1}^l \}.
	\end{align*}
	Finally, one obtains the last part from \eqref{end-tor-lin} and \eqref{indeg-tor-lin} by choosing suitable $ a, a', e $ and $ e' $.
\end{proof}

Here are our results on the linearity of regularity for Ext and Tor modules.

\begin{theorem}\label{thm:main-Ext}
	Let $ Q $ be a standard graded Noetherian algebra, $ A := Q/({\bf f}) $, where $ {\bf f}  := f_1,\ldots,f_c $ is a homogeneous $ Q $-regular sequence. Let $ M $ and $ N $ be finitely generated graded $ A $-modules such that $ \Ext_Q^i(M,N) = 0 $ for all $ i \gg 0 $.
	
	Then, for every $ \ell \in \{0,1\} $, there exist $ a_{\ell} \in \{ \deg(f_j) : 1 \le j \le c \} $ and $ e_{\ell} \in \mathbb{Z} \cup \{ - \infty \} $ such that
	\[
		\reg \big( \Ext_A^{2i+\ell}(M,N) \big) = - a_{\ell} \cdot i + e_{\ell} \quad \mbox{for all } i \gg 0.
	\]
\end{theorem}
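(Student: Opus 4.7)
The strategy is to combine Proposition~\ref{prop:Gulliksen-cor} with Proposition~\ref{prop:BCH}. First, by Proposition~\ref{prop:Gulliksen-cor}, the hypothesis $\Ext_Q^i(M,N)=0$ for $i\gg0$ guarantees that each of the $\mathbb{Z}^2$-graded modules $E^{\rm even}:=\Ext_A^{\rm even}(M,N)$ and $E^{\rm odd}:=\Ext_A^{\rm odd}(M,N)$ is finitely generated over the bigraded polynomial ring $S=Q_0[X_1,\ldots,X_d,Y_1,\ldots,Y_c]$, with $\deg(X_i)=(0,1)$ and $\deg(Y_j)=(1,-w_j)$, where $w_j=\deg(f_j)$.

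Next I would apply Proposition~\ref{prop:BCH} with $R=S$, taking the variables $Z_j:=Y_j$ of bidegree $(1,g_j)$ where $g_j=-w_j$, and $L=E^{\rm even}$ (respectively $L=E^{\rm odd}$). By the very way the bigrading on $E^{\rm even}$ and $E^{\rm odd}$ was defined in \ref{bigrad-mod-struc-Ext-Tor}, the $t$-th horizontal slice $L_{(t,*)}$ is precisely the graded $Q$-module underlying $\Ext_A^{2t+\ell}(M,N)$. Since Castelnuovo--Mumford regularity of a finitely generated graded $A$-module is computed from local cohomology with respect to $A_+$, which agrees with local cohomology with respect to $Q_+$ after the epimorphism $Q\to A$, the quantity $\reg(L_{(t,*)})$ of Proposition~\ref{prop:BCH} coincides with $\reg(\Ext_A^{2t+\ell}(M,N))$. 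Invoking the last part of that proposition yields $a\in\{-w_j:1\le j\le c\}$ and $e\in\mathbb{Z}\cup\{-\infty\}$ with
\[
\reg\bigl(\Ext_A^{2t+\ell}(M,N)\bigr)=t\cdot a+e\quad\text{for all }t\gg0.
\]

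Setting $a_\ell:=-a\in\{\deg(f_j):1\le j\le c\}$ and $e_\ell:=e$ then completes the argument. The delicate point, namely that the leading coefficient must land in the prescribed finite set $\{\deg(f_j)\}$ rather than being an arbitrary rational (or even integer), is exactly the content of Proposition~\ref{prop:BCH}, which in turn rests on the description of the shape of the support of Tor modules over a polynomial ring established in \cite{BCH13}. Thus the principal obstacle has already been absorbed by the preparatory results in Section~\ref{sec:module-structures-Ext-Tor} and in \cite{BCH13}, and once the bigraded Eisenbud-operator structure is in place, Theorem~\ref{thm:main-Ext} is a direct translation of Proposition~\ref{prop:BCH} into the Ext language, separated according to parity of the homological degree.
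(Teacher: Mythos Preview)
Your proposal is correct and is precisely the paper's approach: the paper's own proof is the single sentence ``The theorem follows from Propositions~\ref{prop:Gulliksen-cor} and \ref{prop:BCH},'' and you have simply unpacked that citation with the appropriate identifications $g_j=-w_j$ and $L_{(t,*)}=\Ext_A^{2t+\ell}(M,N)$. One small caution on notation: the ring called $Q$ in Proposition~\ref{prop:BCH} is the auxiliary polynomial ring $Q_0[X_1,\ldots,X_d]$, not the $Q$ of the theorem statement, so your remark about regularity agreeing via the epimorphism is really about the surjection $Q_0[X_1,\ldots,X_d]\twoheadrightarrow A$; but this does not affect the validity of the argument.
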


\begin{proof}
	The theorem follows from Propositions~\ref{prop:Gulliksen-cor} and \ref{prop:BCH}.
\end{proof}

\begin{remark}
More precisely, for every $ \ell \in \{0,1\} $, Proposition \ref{prop:BCH} shows that, for any $j$, the initial and ending degrees of $ \Tor^{Q_0[X_1,\dots,X_d]}_j \big( \Ext_A^{2i+\ell}(M,N),Q_0 \big) $ are eventually  linear functions in $ i $.
\end{remark}

\begin{remark}
	In Theorem~\ref{thm:main-Ext}, if $ Q $ is regular, then the assumption on vanishing of Ext modules over $ Q $ is superfluous.	
\end{remark}

	The asymptotic linearity of regularity for Tor modules holds in certain cases.
	
	\begin{theorem}\label{cor:lin-reg-Tor-dim-1}
		Let $ Q $ be a standard graded Noetherian algebra, $ A := Q/({\bf f}) $, where $ {\bf f}  := f_1,\ldots,f_c $ is a homogeneous $ Q $-regular sequence. Assume $Q$ is *local or the epimorphic image of a Gorenstein ring. Let $ M $ and $ N $ be finitely generated graded $ A $-modules such that,\par
		
		{\rm (i)}  $M$ has finite projective dimension over $Q$,\par
		{\rm (ii)} $\dim \big(\Tor_i^A(M,N) \big) \le 1 $ for any $ i \gg 0 $.\par
		
		Then, for every $ \ell \in \{0,1\} $, there exist $ a_\ell \in \{ \deg (f_j) : 1 \le j \le c \} $ and $ e_\ell \in \mathbb{Z} \cup \{ - \infty \} $ such that
		$$ \reg\left( \Tor_{2i+\ell}^A(M,N) \right) = a_\ell \cdot i + e_\ell ,\ \forall i \gg 0.$$
\end{theorem}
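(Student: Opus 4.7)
The plan is to mirror the proof of Theorem~\ref{thm:main-Ext}, inserting Matlis duality so that Proposition~\ref{prop:BCH} can be applied in the Tor direction. The hypothesis that $M$ has finite projective dimension over $Q$ gives $\Tor^Q_i(M,N)=0$ for $i\gg 0$, the Tor-analog of the Ext-vanishing used in Gulliksen's theorem. Combined with the dimension bound (ii), Theorem~\ref{thm:main} (announced in the discussion after Theorem~\ref{thm:Gulliksen}) supplies finite generation of the graded $T$-module $^\mathfrak{m}{D^l_\star} = H^l_\mathfrak{m}(\Tor^A_\star(M,N))^\vee$ over $T = A[y_1,\ldots,y_c]$ for every $l$; the assumption that $Q$ is *local or an epimorphic image of a Gorenstein ring is exactly what is needed so that the Matlis/local-duality framework of Section~\ref{sec:module-structures-Ext-Tor} is available.

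Since $\dim \Tor^A_i(M,N)\le 1$ for $i\gg 0$, only $l\in\{0,1\}$ contribute to local cohomology in high homological degree. For each $\ell\in\{0,1\}$, the even and odd pieces $^\mathfrak{m}{D^l_{2\star+\ell}}$ are, by \ref{bigrad-mod-struc-Ext-Tor}, finitely generated $\mathbb{Z}^2$-graded modules over $S = Q_0[X_1,\ldots,X_d,Y_1,\ldots,Y_c]$ with $\deg Y_j = (1,-w_j)$. Applying Proposition~\ref{prop:BCH} to each of these four bigraded modules yields that $\indeg$ and $\eend$ of the slice $(^\mathfrak{m}{D^l_{2\star+\ell}})_{(i,*)}$ are eventually linear functions of $i$ with slopes in $\{-w_j:1\le j\le c\}$. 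Since Matlis duality swaps $\indeg$ and $\eend$ up to sign, $\eend H^l_\mathfrak{m}(\Tor^A_{2i+\ell}(M,N))$ is eventually linear in $i$ with slope in $\{w_j\} = \{\deg f_j\}$. Using the identity
\[
\reg(L) = \max_{l\ge 0}\bigl\{\eend H^l_\mathfrak{m}(L)+l\bigr\},
\]
which here reduces to a maximum over $l\in\{0,1\}$, we find that $\reg(\Tor^A_{2i+\ell}(M,N))$ is the pointwise maximum of two eventually linear integer-valued sequences in $i$, hence eventually linear itself, with slope one of the $\deg f_j$.

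The main obstacle is the Matlis-duality bookkeeping in the graded (possibly non-artinian) setting: one has to verify that the regularity of $\Tor^A_{2i+\ell}(M,N)$ computed over the polynomial presentation really matches $\max_l\{\eend H^l_\mathfrak{m}+l\}$, and that the bigraded $S$-structure on $^\mathfrak{m}{D^l_{2\star+\ell}}$ records the correct internal-degree shifts by $-w_j$. This is precisely where the *local or Gorenstein-quotient hypothesis on $Q$ enters: it validates the commutative square at the end of Section~\ref{sec:module-structures-Ext-Tor}, which identifies the action of $^\mathfrak{m}{t_j^l}$ with an Ext-action under local duality and thereby makes the finite generation provided by Theorem~\ref{thm:main} usable in our bigraded setup.
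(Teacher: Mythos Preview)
Your overall strategy is right and closely parallels the paper's, but there is a genuine gap at the step where you invoke
\[
\reg(L)\;=\;\max_{l\ge 0}\bigl\{\eend H^l_{\mathfrak m}(L)+l\bigr\}.
\]
The right-hand side is $\reg_{\mathfrak m}(L)$, not $\reg(L)$. As recorded in Lemma~\ref{lem:reg_and_regm}(2), one only has $\reg(L)\le \reg_{\mathfrak m}(L)\le \reg(L)+\dim A_0$, and these differ whenever $A_0$ is not artinian. So your argument, as written, proves that $\reg_{\mathfrak m}\!\big(\Tor^A_{2i+\ell}(M,N)\big)$ is eventually linear in $i$, but this does \emph{not} force $\reg\!\big(\Tor^A_{2i+\ell}(M,N)\big)$ to be eventually linear: a sequence sandwiched within a bounded distance of a linear function can still oscillate. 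Your closing paragraph flags this as ``bookkeeping,'' but it is exactly the substantive point, and the *local/Gorenstein hypothesis on $Q$ does not resolve it---that hypothesis is there to make local duality and Theorem~\ref{thm:main} available, not to collapse $\reg$ and $\reg_{\mathfrak m}$.

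The paper closes this gap by refining the data coming out of Theorem~\ref{thm:main}. Instead of working only with $G_i=\Ext^{\dim B}_B(W_i,\omega_B)$ and $D_i=\Ext^{\dim B-1}_B(W_i,\omega_B)$ (dual to $H^0_{\mathfrak m}$ and $H^1_{\mathfrak m}$), one splits $D_i$ via the functor $H^0_{[0]}$ into an $\mathfrak m_0$-torsion part $E_i$ and a quotient $F_i$; all three families $\bigoplus G_i$, $\bigoplus E_i$, $\bigoplus F_i$ are finitely generated over $A[y_1,\ldots,y_c]$, so Proposition~\ref{prop:BCH} gives linearity of each $\indeg$. After localizing at $\mathfrak m_0\in\Specmax(A_0)$, the short exact sequence $0\to E_i'\to D_i'\to F_i'\to 0$ is identified (via the spectral sequence $H^p_{\mathfrak m_0}H^q_{A_+}\Rightarrow H^{p+q}_{\mathfrak m}$) with the Matlis dual of the sequence in Lemma~\ref{lem:reg_and_regm}(3), so that $-\indeg G_i$, $-\indeg F_i$, $-\indeg E_i$ recover exactly the three terms in the formula for $\reg(W_i)$ there. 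Taking the supremum over $\mathfrak m_0$ then yields $\reg(W_i)=\max\{ai+e,\,a'i+e',\,a''i+e''+1\}$, which is eventually linear with slope in $\{\deg f_j\}$. If you restrict to the case $A_0$ artinian (e.g.\ a field), your argument is complete as stated; in the general *local setting you need this extra decomposition.
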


We postpone the proof of Theorem~\ref{cor:lin-reg-Tor-dim-1} until presenting ingredients of the proof. 

\begin{remark}
In Sections~\ref{sec:examples:linearity} and \ref{sec:examples:nonlinearity}, we show that the condition (ii) in the Theorem~\ref{cor:lin-reg-Tor-dim-1} cannot be omitted.
\end{remark}

\begin{lemma}\label{lem:reg_and_regm}
Let $B\rightarrow A$ be a graded epimorphism of *local rings. Assume that $ B $ is Cohen-Macaulay.  Let $W$ be a finitely generated  graded $A$-module. Set $\reg_{\mathfrak{m}}(W):= \max_{j} \lbrace \eend(H^j_{\mathfrak{m}}(W))+j\rbrace$. Then one has
\begin{itemize}
\item[$(1)$] $\eend\big(H^i_{\mathfrak{m}}(W)\big)= -\indeg\big(\Ext_B^{\dim B-i}(W, \omega_B)\big)$.
\item[$(2)$] $\reg(W)\le \reg_{\mathfrak{m}}(W)\le \reg(W)+\dim A_0$.
\item[$(3)$] If $\dim(W) \le 1$,  then $H^p_{\mathfrak{m}_0}(H^q_{A_+}(W))=0$ for $p+q>1$, and 
\begin{align*}
\reg_{\mathfrak{m}}(W)\hspace{-0.05cm}&= \hspace{-0.05cm}\max \hspace{-0.05cm} \left\{ \hspace{-0.05cm}\eend\hspace{-0.05cm} \left( H^0_{\mathfrak{m}}(W)\right)\hspace{-0.05cm}, \eend\hspace{-0.05cm}\left(\hspace{-0.05cm}H^1_{\mathfrak{m}_0}\hspace{-0.05cm}\big(H^0_{A_+}(W)\big)\hspace{-0.05cm}\right)\hspace{-0.07cm}+\hspace{-0.05cm}1, \eend\hspace{-0.05cm}\left(\hspace{-0.05cm}H^0_{\mathfrak{m}_0}\hspace{-0.05cm}\big(H^1_{A_+}(W)\big)\hspace{-0.05cm}\right)\hspace{-0.07cm}+\hspace{-0.05cm}1\right\}\\
\reg(W)&= \max  \hspace{-0.05cm} \left\{ \hspace{-0.05cm}\eend\hspace{-0.05cm}\big( H^0_{\mathfrak{m}}(W)\big),  \eend\hspace{-0.05cm}\left(\hspace{-0.05cm}H^1_{\mathfrak{m}_0}\hspace{-0.05cm}\big(H^0_{A_+}(W)\big)\hspace{-0.05cm}\right)\hspace{-0.05cm}, \eend\hspace{-0.05cm}\left(\hspace{-0.05cm}H^0_{\mathfrak{m}_0}\hspace{-0.05cm}\big(H^1_{A_+}(W)\big)\hspace{-0.05cm}\right)\hspace{-0.05cm}+1\right\}
\end{align*}
\end{itemize}
\end{lemma}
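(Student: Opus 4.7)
For part (1), the idea is to apply graded local duality. Viewing $W$ as a finitely generated graded $B$-module via $B\twoheadrightarrow A$, the *local Cohen--Macaulay hypothesis on $B$ produces the natural graded isomorphism $H^i_{\mathfrak{m}}(W)^{\vee}\simeq \Ext_B^{\dim B-i}(W,\omega_B)$; combining this with the identity $\eend(N^{\vee})=-\indeg(N)$ of graded Matlis duality gives the claimed equality.

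For part (2), the tool is the Grothendieck spectral sequence arising from the factorization $\Gamma_{\mathfrak{m}}=\Gamma_{\mathfrak{m}_0}\circ\Gamma_{A_+}$,
\[
E_2^{p,q}=H^p_{\mathfrak{m}_0}\bigl(H^q_{A_+}(W)\bigr)\Longrightarrow H^{p+q}_{\mathfrak{m}}(W).
\]
The upper bound is easy: since $\mathfrak{m}_0 A$ is generated in internal degree $0$, the functor $H^p_{\mathfrak{m}_0}$ preserves the $A$-grading and hence satisfies $\eend(H^p_{\mathfrak{m}_0}(N))\le\eend(N)$ for every graded $A$-module $N$; together with $H^p_{\mathfrak{m}_0}=0$ for $p>\dim A_0$, convergence of the spectral sequence gives $\reg_{\mathfrak{m}}(W)\le\reg(W)+\dim A_0$. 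The lower bound $\reg(W)\le\reg_{\mathfrak{m}}(W)$ I would prove by induction on $\dim A_0$: when $\dim A_0=0$, each graded piece of $H^q_{A_+}(W)$ is automatically $\mathfrak{m}_0$-torsion, so $E_2^{p,q}=0$ for $p\ge 1$ and the spectral sequence forces $H^q_{\mathfrak{m}}(W)=H^q_{A_+}(W)$; the inductive step picks a degree-$0$ element of $\mathfrak{m}_0$ that is filter-regular on $W$ and on each $H^q_{A_+}(W)$, and transfers the estimate across the resulting short exact sequences in both local cohomologies.

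For part (3), the dimension hypothesis kills most of the $E_2$-page. Grothendieck vanishing gives $H^q_{A_+}(W)=0$ for $q\ge 2$; $H^1_{A_+}(W)$ has $\dim_A\le 0$, so is $\mathfrak{m}$-torsion and in particular $\mathfrak{m}_0$-torsion, whence $H^p_{\mathfrak{m}_0}(H^1_{A_+}(W))=0$ for $p\ge 1$; and $H^0_{A_+}(W)\subseteq W$ is a finitely generated graded $A$-module of dimension $\le 1$, forcing $H^p_{\mathfrak{m}_0}(H^0_{A_+}(W))=0$ for $p\ge 2$. This is the claimed vanishing $H^p_{\mathfrak{m}_0}(H^q_{A_+}(W))=0$ for $p+q>1$. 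Feeding it back, the differential $d_2\colon E_2^{0,1}\to E_2^{2,0}$ is zero, so the spectral sequence yields $H^0_{\mathfrak{m}}(W)=H^0_{\mathfrak{m}_0}(H^0_{A_+}(W))$, $H^i_{\mathfrak{m}}(W)=0$ for $i\ge 2$, and
\[
0\to H^1_{\mathfrak{m}_0}(H^0_{A_+}(W))\to H^1_{\mathfrak{m}}(W)\to H^1_{A_+}(W)\to 0,
\]
from which the $\reg_{\mathfrak{m}}$-formula (using $H^1_{A_+}(W)=H^0_{\mathfrak{m}_0}(H^1_{A_+}(W))$) is immediate. For the $\reg$-formula I would split $\eend(H^0_{A_+}(W))$ via $0\to H^0_{\mathfrak{m}}(W)\to H^0_{A_+}(W)\to N\to 0$; since $H^0_{A_+}(W)$ lives in boundedly many graded degrees it is finitely generated over $A_0$, so the $\mathfrak{m}_0$-torsion-free quotient $N$ has $\dim_{A_0}N_n=1$ whenever $N_n\ne 0$, and Grothendieck nonvanishing $H^1_{\mathfrak{m}_0}(N_n)\ne 0$ gives $\eend(N)=\eend(H^1_{\mathfrak{m}_0}(N))=\eend(H^1_{\mathfrak{m}_0}(H^0_{A_+}(W)))$.

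\textbf{Main obstacle.} The least routine point is the lower bound $\reg(W)\le\reg_{\mathfrak{m}}(W)$ in part (2): higher differentials in the spectral sequence could in principle destroy contributions from $\mathfrak{m}_0$-torsion-free parts of $H^q_{A_+}(W)$, so the inductive reduction requires careful verification that an appropriate filter-regular degree-$0$ element of $\mathfrak{m}_0$ exists and that passing to the quotient preserves the end-of-cohomology data.
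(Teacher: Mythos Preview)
Your approach is essentially the same as the paper's: graded local duality for (1), and the composed functor spectral sequence $H^p_{\mathfrak{m}_0}(H^q_{A_+}(W))\Rightarrow H^{p+q}_{\mathfrak{m}}(W)$ for (2) and (3). The paper simply cites \cite[3.6.19]{BH98} for (1) and \cite[Prop.~3.4]{Has12} for (2), so your outlines (especially the induction on $\dim A_0$ for the lower bound in (2)) supply what the paper leaves to references.

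One small point of divergence in (3): to get $E_2^{1,1}=H^1_{\mathfrak{m}_0}(H^1_{A_+}(W))=0$ you argue directly that $H^1_{A_+}(W)$ is $\mathfrak{m}_0$-torsion. Your one-line justification ``$\dim_A\le 0$'' is correct but deserves a word of explanation, since $\Supp(W)\cap V(A_+)$ can a priori have dimension $1$: the point is that any graded prime $\mathfrak{p}\in\Supp(W)$ with $\mathfrak{p}\not\supseteq A_+$ must satisfy $\mathfrak{p}\cap A_0=\mathfrak{m}_0$ (else $\dim(A/\mathfrak{p})\ge\dim(A_0/(\mathfrak{p}\cap A_0))+1\ge 2$), so $W/H^0_{A_+}(W)$ is $\mathfrak{m}_0$-torsion and hence so is $H^1_{A_+}(W)$. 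The paper's route is slightly lazier: it only argues $E_2^{p,q}=0$ for $p\ge 2$ or $q\ge 2$, so that $E_2=E_\infty$, and then reads off $E_2^{1,1}=E_\infty^{1,1}=H^2_{\mathfrak{m}}(W)=0$ from Grothendieck vanishing on the abutment. Either way works; the paper's avoids the support analysis, yours is more self-contained.
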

\begin{proof}
Part $(1)$ follows from \cite[Thm. 3.6.19]{BH98}.  Part $(2)$ is \cite[Prop. 3.4]{Has12}; it follows from the proof of this result that $H^p_{\mathfrak{m}_0}(H^q_{A_+}(W))=0$ for $p+q>\dim W$, which proves $(3)$. More directly, $(3)$ follows from the fact that $ H^p_{\mathfrak{m}_0}\hspace{-0.05cm}\big(H^q_{A_+}(W)\big) = 0 $ for $q\ge 2$  and for  $p\ge 2$ as $\dim (A/\ann_A(W))\le 1$, which implies that the composed functor spectral sequence $H^p_{\mathfrak{m}_0}\hspace{-0.05cm}\big(H^q_{A_+}(W)\big) \Longrightarrow H^{p+q}_{\mathfrak{m}}(W)$ abuts at step $2$.
\end{proof}

\begin{theorem}\label{thm:main}
	Let  $B\rightarrow Q$ be a graded epimorphism, $A := Q/({\bf f})$, where $ {\bf f}  := f_1,\ldots,f_c $ is a homogeneous $ Q $-regular sequence. Let $P$ be a finitely generated graded $B$-module, and $ M, N $ be finitely generated graded $A$-modules such that

{\rm (i)} $\Ext_B^q (N, P)=0$ for $q\gg0$,

{\rm (ii)} $M$ has finite projective dimension over $Q$, 

{\rm (iii)} $\exists \,r$,  $\Ext_B^q  \hspace{-0.07cm} \left(\Tor^A_i (M,N), P\right)=0$  for every $q\not\in \{ r-1,r\}$ and $i\gg 0$.

Then, for any $q$, 
	\[
		\Ext_B^q \hspace{-0.07cm} \left( \Tor^A_\star (M,N), P\right)
	\]
is a finitely generated graded $ A[y_1,\ldots , y_c] $-module.
\end{theorem}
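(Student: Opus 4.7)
The plan is to construct two Cartan--Eilenberg-type spectral sequences with a common abutment, use Gulliksen's theorem (Theorem~\ref{thm:Gulliksen}) together with (i) and (ii) on one of them to prove that the abutment is finitely generated over $T := A[y_1,\dots,y_c]$, and then leverage (iii) on the other to propagate finite generation down to $\Ext_B^q(\Tor_\star^A(M,N),P)$.

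Fix a graded $A$-free resolution $\mathbb{F}_\bullet \twoheadrightarrow M$ by finitely generated free modules, and a graded $B$-injective resolution $P \hookrightarrow \mathbb{J}^\bullet$. The two filtrations on the double complex $\Hom_B(\mathbb{F}_\bullet \otimes_A N,\mathbb{J}^\bullet)$ produce spectral sequences
\[
\widehat E_2^{p,q} = \Ext_A^p\bigl(M,\Ext_B^q(N,P)\bigr) \quad \text{and} \quad \widetilde E_2^{p,q} = \Ext_B^q\bigl(\Tor_p^A(M,N),P\bigr),
\]
both converging to the same graded object $\mathcal{E}^{\bullet}$. Naturality of the Eisenbud operators (\ref{grad-mod-struc-Ext-Tor}) lifts the $T$-action to every page of both spectral sequences, with $T$-linear differentials (up to prescribed bidegree shifts), and equips $\mathcal{E}^{\bullet}$ with a graded $T$-module structure whose spectral-sequence filtrations are $T$-stable.

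To show $\mathcal{E}^\bullet$ is finitely generated over $T$: by (i) only finitely many columns of $\widehat E_2$ are nonzero and each $\Ext_B^q(N,P)$ is a finitely generated $A$-module; by (ii), $M$ has finite projective dimension over $Q$, whence $\Ext_Q^i(M,-)$ vanishes for $i \gg 0$ on any finitely generated $A$-module. Theorem~\ref{thm:Gulliksen} then ensures every column $\Ext_A^\star(M,\Ext_B^q(N,P))$ is finitely generated over $T$, so $\widehat E_2$ is a finitely generated $T$-module. Passage through $T$-linear differentials on subsequent pages preserves finite generation (Noetherianity of $T$), so $\widehat E_\infty$ is finitely generated, and hence so are the associated graded of $\mathcal{E}^\bullet$ and $\mathcal{E}^\bullet$ itself.

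Now apply (iii) to $\widetilde E_2$: for $p\gg 0$ only rows $q\in\{r-1,r\}$ are nonzero, so the only possibly nontrivial differential in that range is $d_2\colon \widetilde E_2^{p,r} \to \widetilde E_2^{p+2,r-1}$. Hence $\widetilde E_\infty = \widetilde E_3$ on the two-row region, and the filtration on $\mathcal{E}^n$ collapses for $n \gg 0$ to a two-step extension
\[
0 \to \widetilde E_\infty^{\,n-r+1,\,r-1} \to \mathcal{E}^n \to \widetilde E_\infty^{\,n-r,\,r} \to 0.
\]
As $T$-stable submodule and quotient of the finitely generated $T$-module $\mathcal{E}^\bullet$, both $\widetilde E_\infty^{\,\star,r-1}$ and $\widetilde E_\infty^{\,\star,r}$ are finitely generated over $T$. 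For $q\notin\{r-1,r\}$, $\widetilde E_2^{\,\star,q}$ is nonzero only in finitely many homological degrees (each component a finitely generated $A$-module), hence is trivially finitely generated over $T$. The principal obstacle is the two-row case $q\in\{r-1,r\}$: from the $T$-linear short exact sequences
\[
0 \to \widetilde E_\infty^{\,\star,r} \to \widetilde E_2^{\,\star,r} \to \mathrm{im}(d_2) \to 0, \qquad 0 \to \mathrm{im}(d_2) \to \widetilde E_2^{\,\star+2,r-1} \to \widetilde E_\infty^{\,\star+2,r-1} \to 0,
\]
one must extract finite generation of $\widetilde E_2^{\,\star,r}$ and $\widetilde E_2^{\,\star,r-1}$. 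The delicate point is breaking the apparent circular dependence between the two rows by simultaneously exploiting the $T$-equivariance of $d_2$ and Noetherianity of $T$ to establish that $\mathrm{im}(d_2)$ is finitely generated; once this is accomplished, both $\widetilde E_2$-rows are finitely generated over $T$ by Noetherian extension, and the theorem follows.
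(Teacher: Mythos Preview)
Your architecture is exactly the paper's: the same double complex $\Hom_B(\mathbb{F}_\bullet\otimes_A N,\mathbb{J}^\bullet)$, the same two spectral sequences, and the same use of Gulliksen's theorem together with (i) and (ii) to conclude that the common abutment $\mathcal{E}^\bullet$ is finitely generated over $T$. Your treatment of the rows $q\notin\{r-1,r\}$ is also correct.

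The genuine gap is in your analysis of $\widetilde E$ on the two-row strip. You assert that the only possible differential is $d_2\colon \widetilde E_2^{\,p,r}\to\widetilde E_2^{\,p+2,r-1}$, but this is the differential shape for the \emph{other} filtration (the one yielding $\widehat E$). The spectral sequence with $\widetilde E_2^{\,p,q}=\Ext_B^q\bigl(\Tor_p^A(M,N),P\bigr)$ is obtained by taking cohomology in the $p$-direction first, i.e.\ from the filtration of the total complex by the injective-resolution index $q$; its differentials therefore \emph{raise} $q$, namely $d_s\colon \widetilde E_s^{\,p,q}\to\widetilde E_s^{\,p-s+1,\,q+s}$. (This is consistent with the paper's own computation in Proposition~\ref{prop:zero-two-loc-coh}, where $d_2$ runs from row $b-2$ to row $b$.) With only two \emph{adjacent} rows $q\in\{r-1,r\}$ present for $p\ge p_0$, no $d_s$ with $s\ge2$ can connect them, and no $d_s$ enters them from below; hence $\widetilde E_2^{\,p,q}=\widetilde E_\infty^{\,p,q}$ in that range, exactly as the paper states. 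One then has for $n\gg0$ the short exact sequence
\[
0\;\longrightarrow\;\widetilde E_2^{\,n-r,\,r}\;\longrightarrow\;\mathcal{E}^{\,n}\;\longrightarrow\;\widetilde E_2^{\,n-r+1,\,r-1}\;\longrightarrow\;0,
\]
(note your sub and quotient are interchanged), which immediately exhibits each of the two $\widetilde E_2$-rows as a sub- or quotient $T$-module of $\mathcal{E}^\star$, finishing the proof. Your ``circular dependence'' is thus an artifact of the wrong differential direction; as written, you stop at precisely this point, and the gesture toward ``$T$-equivariance of $d_2$ and Noetherianity of $T$'' cannot by itself establish that $\mathrm{im}(d_2)$ is finitely generated without already knowing one of the two rows is Noetherian. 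The fix is the one-line correction of the bidegree of $d_s$.
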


Recall that whenever $B$ is equidimensional Cohen-Macaulay and $P=\omega_B$, then the  modules  $\Ext_B^{\dim B-i}\hspace{-0.05cm} \left( \Tor^A_\star (M,N), \omega_B\right) $ only depend upon $i$ and $ \Tor^A_\star (M,N)$ as, in the local case, these are Matlis dual to the $i$-th local cohomologies of $\Tor^A_\star (M,N)$.

With such a choice for $B$ and $P$, condition (i) is satisfied, and condition (iii) with $r=\dim B$ is equivalent to $ \dim \left( \Tor_i^A(M,N) \right) \le 1$. This will be the main case of application of this result.

Also condition (i) is always satisfied if $B$ is regular.

\begin{proof}[Proof of Theorem~\ref{thm:main}]
	Let $\mathbb{F}_{\bullet}^{M}$ be a graded minimal free resolution of $M$ over $A$, and $ \mathbb{I}^{\bullet}_{P}$ be a  graded minimal injective resolution of $P$ over $B$. Consider the double complex $ \mathbb{K}^{\bullet,\bullet} $ defined by
	\begin{equation}\label{double-complexes}
		\mathbb{K}^{p,q}:= \Hom_B \hspace{-0.07cm} \left( \mathbb{F}^{M}_{p} \otimes_A N, \mathbb{I}^{q}_{P} \right) \cong \Hom_A \hspace{-0.07cm} \left( \mathbb{F}^M_{p}, \Hom_B \hspace{-0.07cm} \left( N,\mathbb{I}^q_{P} \right) \right)
	\end{equation}
	and its associated spectral sequences. The double complexes in \eqref{double-complexes} are equalized by the natural isomorphism. Since $\Hom_A \hspace{-0.07cm} \left( \mathbb{F}_p^M,- \right) $ is an exact functor, by computing cohomology vertically,
	\[
		^{v}E^{p,q}_1 = \Hom_A  \hspace{-0.07cm} \left( \mathbb{F}^M_p, \Ext_B^q(N,P) \right) \quad \mbox{and} \quad ^{v}E_{2}^{p,q} =
		\Ext^p_A\hspace{-0.05cm}\left(M,\Ext_B^q(N,P)\right).
	\]
According to Theorem \ref{thm:Gulliksen}, condition (ii) implies that the graded $ A[y_1,\ldots , y_c] $-modules $\Ext^\star_A(M,\Ext_B^q(N,P))$ are finitely generated for every $q$. As these are zero for all but finitely many $q$ by (i), $^{v}E^{\star ,q}_\infty$, for any $q$, as well as the homology $H^\star$ of the totalization of $\mathbb{K}^{\bullet ,\bullet}$ are finitely generated graded $ A[y_1,\ldots , y_c] $-modules.

On the other hand, since $\Hom_B(-, \mathbb{I}^q_{P})$ is an exact functor, if we start taking cohomology horizontally, then we obtain the first pages of the spectral sequence:
	\[
		^hE_1^{p,q} = \Hom_B \hspace{-0.07cm} \left( \Tor^A_p(M,N), \mathbb{I}^q_{P} \right), \quad ^{h}E_{2}^{p,q} = \
		\Ext_B^q \hspace{-0.07cm} \left( \Tor^A_p(M,N), P\right)
	\]
	and condition (iii) implies that there exists $p_0$ such that $^{h}E_{2}^{p,q}=0$ unless $q=r$ or $q=r-1$, if $p\geq p_0$.  Hence $^{h}E_{2}^{p,q}={^{h}{E_{\infty}^{p,q}}}$ for $p\ge p_0$.

	Taking direct sum over $ p \ge p_0 + r $ and using the naturality of Eisenbud operators, as in \ref{grad-mod-struc-Ext-Tor}, we obtain a short exact sequence of graded $ A[y_1,\ldots,y_c] $-modules:

		\begin{align*}
		0 \longrightarrow  \bigoplus_{p \ge p_0 +r }  \hspace{-0.1cm} \hspace{-0.1cm} \Ext_B^r \hspace{-0.07cm} \left( \Tor^A_{p-r}(M,N),P \right)\hspace{-0.1cm}& \longrightarrow \bigoplus_{p \ge p_0+r} \hspace{-0.1cm} H^p \\ 
		& \longrightarrow  \bigoplus_{p \ge p_0 +r } \Ext_B^{r-1} \hspace{-0.07cm} \left( \Tor^A_{p-r+1}(M,N),P \right)\hspace{-0.1cm}\longrightarrow 0.
	\end{align*}
	The middle term is a finitely generated graded $A[y_1,\dots,y_c]$-module, as $H^\star$ is so. Hence the assertion follows.
\end{proof}
\begin{remark} Notice that whenever $A$ is *local Cohen-Macaulay (equivalently $Q$), one may apply the same line of proof with $B=A$, $P=\omega_A$ and $N$ replaced by a high syzygy to assume that $N$ is maximal Cohen-Macaulay. In this particular (but important) situation, the vertical spectral sequence abuts on step 2.
\end{remark}

\begin{remark}\label{rmk:thm:main-gen-version}

	 Theorem~\ref{thm:main} with $B$ *local Cohen-Macaulay shows that if there exits an integer $ r \ge 1 $ such that $ r - 1 \le \depth\left( \Tor_i^A(M,N) \right) $ and $ \dim \left( \Tor_i^A(M,N)  \right) \le r $  for all $ i \gg 0 $, then for any $q$, 
	\[
		\Ext_B^q \hspace{-0.07cm} \left( \Tor^A_\star (M,N), \omega_B\right)
	\]
is a finitely generated graded $ A[y_1,\ldots , y_c] $-module.
\end{remark}

\begin{proof}[Proof of Theorem~\ref{cor:lin-reg-Tor-dim-1}]
Set $W_i:= \Tor_{2i}^A(M,N)$. We will show the linearity of $ \reg( W_i ) $ for all $ i \gg 0 $. The result for $ \Tor_{2i+1}^A(M,N) $ follows similarly.  We adopt the notations of the proof of Theorem~\ref{thm:main} after choosing a graded epimorphism  $B\rightarrow Q$ with $B$ equidimensional Cohen-Macaulay and $P=\omega_B$ in this statement, and choose $i_0$ such that $\dim(W_i)\le 1$ for all $i\ge i_0$. Notice that $\Ext^{\dim B-j}_B(W_i,\omega_B)=0$ for all $i\ge i_0$ and $j\neq 0,1$. 
Set 
\begin{equation*}
H^0_{[0]}(M):= \bigcup_{\substack{I\subset A_0,\\
\dim(A_0/I)=0}} H^0_{I}(M)
\end{equation*}
 as in \cite[Section~7]{CJR13}. Let  $D_i:= \Ext^{\dim B-1}_B(W_i,\omega_B)$,  $E_i:= H^0_{[0]}(D_i )$, $F_i$ be defined by the exact sequence
\begin{equation}\label{exact sequence proof 1}
	0 \longrightarrow \bigoplus_{ i \ge i_0 } E_i \longrightarrow \bigoplus_{i\ge i_0} \Ext^{\dim B-1}_B(W_i,\omega_B) \longrightarrow \bigoplus_{i\ge i_0} F_i \longrightarrow 0,
\end{equation}
and $G_i:= \Ext^{\dim B}_B(W_i,\omega_B)$. By Theorem~\ref{thm:main}, $\bigoplus_i G_i$ and $\bigoplus_i D_i$ are finitely generated graded $A[y_1,\dots,y_c]$-modules. Hence, by \ref{exact sequence proof 1}, so are $\bigoplus_{i} E_i $ and $\bigoplus_{i} F_i$. Then Proposition~\ref{prop:BCH} shows that there exist $a,a',a'' \in \{ \deg (f_j) : 1 \le j \le c \}$, $e,e',e''\in \mathbb{Z}\cup \lbrace -\infty\rbrace$ and $i_0'\ge i_0$, such that for all $i\ge i_0'$,
\begin{align*}
\indeg(G_i)&= -ai-e,\\
\indeg( F_i)&= -a'i-e',\\
\indeg( E_i)&= -a''i-e''.
\end{align*}
We will now show that for $i\ge i_0'$,
$$
\reg(W_i)=r(i):= \max\left\{ ai+e, a'i+e', a''i+e''+1\right\}.
$$
In view of \cite[Lemma~7.2]{CJR13}, for any graded $A$-module $M$, $H^0_{[0]}(M)_{\mu}=H^0_{[0]}(M_{\mu})$, and for $\mathfrak{m}_0\in \Specmax(A_0)$,
\begin{equation}\label{CJR equation}
H^0_{[0]}(M)\otimes_{A_0}(A_0)_{\mathfrak{m}_0} = H^0_{\mathfrak{m}_0}\hspace*{-0.05cm}\big(M \otimes_{A_0}(A_0)_{\mathfrak{m}_0}\big).
\end{equation}
Recall that $\reg(W_i)=\max \left\{ \reg\big(W_i\otimes_{A_0}(A_0)_{\mathfrak{m}_0}\big) : \mathfrak{m}_0\in \Specmax(A_0)\right\}$. Let $\mathfrak{m}_0\in \Specmax(A_0)$, $\mathfrak{m}:= \mathfrak{m}_0+A_{+}$ and write 
$$
-':=-\otimes_{A_0}(A_0)_{\mathfrak{m}_0} \quad \text{and} \quad -^{\vee}:= {^*\Hom}_{A_0'}\hspace*{-0.05cm}\big( - , E_{A_0'}(A_0'/\mathfrak{m}'_0)\big).
$$
 Applying $-'$  to the  sequence \ref{exact sequence proof 1}, we get by \ref{CJR equation} for $i\ge i_0$ the exact sequences 
\begin{equation}\label{exact sequence 2}
	0 \longrightarrow  H^0_{\mathfrak{m}_0} (D_i') \longrightarrow  D_i' \longrightarrow  F_i' \longrightarrow 0.
\end{equation}
Note that $(D_i')^{\vee}\cong H^1_{\mathfrak{m}}(W_i')$ by \cite[Cor.~3.5.9]{BH98}. With the notations as in  Lemma~\ref{lem:reg_and_regm}, and considering the composed functor spectral sequence
\[
	H^p_{\mathfrak{m}_0}\hspace*{-0.05cm}\big(H^q_{A'_+}(-)\big) \Longrightarrow H^{p+q}_{\mathfrak{m}}(-)
\]
as in the proof of \cite[Prop. 3.4]{Has12}, for $i\ge i_0$, we have  the following exact sequences of graded $A'$-modules:
\begin{equation}\label{exact sequence 3}
0\longrightarrow  H^1_{\mathfrak{m}_0}\hspace*{-0.05cm}\big(H^0_{A'_+}(W_i')\big) \longrightarrow  H^{1}_{\mathfrak{m}}(W_i')\longrightarrow  H^0_{\mathfrak{m}_0}\hspace*{-0.05cm}\big( H^1_{A'_+}(W_i')\big)\longrightarrow 0.
\end{equation}
Since $H^0_{A'_+}(W_i')_{\mu}$ is a finitely generated $A_0$-module of dimension at most $1$ for any $\mu$,  $H^1_{\mathfrak{m}_0}\hspace*{-0.05cm}\big(H^0_{A'_+}(W_i')\big)^{\vee}= \bigoplus_\mu H^1_{\mathfrak{m}_0}\hspace*{-0.05cm}\big(H^0_{A'_+}(W_i')_{\mu}\big)^{\vee}$ has no $\mathfrak{m}_0$-torsion,  it follows that 
\[
	H^0_{\mathfrak{m}_0}\hspace*{-0.05cm}\big(H^1_{A'_+}(W_i')\big)^\vee \cong H^0_{\mathfrak{m}_0}(D'_i).
\]
It shows that \ref{exact sequence 3} is the Matlis dual of \ref{exact sequence 2}, and the Matlis dual of $G'_i$ is $H^0_{\mathfrak{m}}(W_i')$.  In particular, we get
\begin{align*}
\eend \hspace*{-0.05cm}\big(H^0_{\mathfrak{m}}(W_i')\big) &= -\indeg(G'_i), \\
\eend \left(H^1_{\mathfrak{m}_0}\hspace*{-0.05cm}\big(H^0_{A'_+}(W_i')\big)\right) &= -\indeg(F'_i),\\
\eend \left(H^0_{\mathfrak{m}_0}\hspace*{-0.05cm}\big(H^1_{A'_+}(W_i')\big)\right)&= -\indeg(E'_i).
\end{align*}
As for any graded $A$-module $M$, $\indeg(M')\ge \indeg(M)$ with equality for some $\mathfrak{m}_0\in\Specmax(A_0)$ if $\indeg(M)\neq -\infty$, it follows from Lemma~\ref{lem:reg_and_regm} that $\reg(W_i)=r(i)$ for all $i\ge i'_0$.
\end{proof}

\begin{proposition}\label{prop:zero-two-loc-coh}
In Theorem \ref{thm:main}, assume $B$ is *local Cohen-Macaulay, $P=\omega_B$ and replace the hypothesis {\rm (iii)} by the following weaker assumption 

{\rm (iii)'} $ \dim \left( \Tor_i^A(M,N) \right) \le 2 $ for all $i\gg 0$. 

Then, for any $q\not= \dim B,\dim B-2$,  $\Ext_B^q \hspace{-0.07cm} \left( \Tor^A_\star (M,N), \omega_B \right) $
is a finitely generated graded $ A[y_1,\ldots , y_c] $-module, and the following are equivalent :\\
{\rm (a)} $\Ext_B^{\dim B} \hspace{-0.07cm} \left( \Tor^A_\star (M,N), \omega_B\right)$ is a finitely generated graded $ A[y_1,\ldots , y_c] $-module,\\
{\rm (b)} $\Ext_B^{\dim B-2} \hspace{-0.07cm} \left( \Tor^A_\star (M,N), \omega_B\right)$ is a finitely generated graded $ A[y_1,\ldots , y_c] $-module.
\end{proposition}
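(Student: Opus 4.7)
The plan is to mirror the proof of Theorem~\ref{thm:main}, reusing the double complex $\mathbb{K}^{p,q}=\Hom_B(\mathbb{F}^M_p\otimes_A N,\mathbb{I}^q_{\omega_B})$ and its two spectral sequences, but with three nonzero rows in the horizontal one instead of two. Set $r:=\dim B$. Since $B$ is Cohen-Macaulay, $\omega_B$ has injective dimension $r$, so hypothesis~(i) of Theorem~\ref{thm:main} holds automatically; together with (ii), the vertical spectral sequence argument there still gives that $\bigoplus_n H^n$ is a finitely generated graded $A[y_1,\ldots,y_c]$-module.

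For the horizontal spectral sequence ${}^hE_2^{p,q}=\Ext_B^q(\Tor_p^A(M,N),\omega_B)$, local duality identifies this with the Matlis dual of $H^{r-q}_\mathfrak{m}(\Tor_p^A(M,N))$; by (iii)' it vanishes for $p\ge p_0$ unless $q\in\{r-2,r-1,r\}$. In this three-row band only the $d_2$'s and $d_3^{p,r}\colon\Ker(d_2^{p,r})\to E_3^{p+3,r-2}$ can be nonzero, so ${}^hE_4={}^hE_\infty$ for $p\ge p_0$. The resulting three-step filtration of $H^n$ combined with the Noetherianity of $A[y_1,\ldots,y_c]$ yields that $\bigoplus_p E_\infty^{p,q}$ is finitely generated for each $q\in\{r,r-1,r-2\}$. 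The cases $q\notin\{r,r-1,r-2\}$ are immediate: either $\Ext_B^q=0$ identically, or it vanishes for $p\ge p_0$, leaving a finite sum of finitely generated $A$-modules.

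To get the equivalence (a)$\iff$(b) together with the finite generation of the middle row, I extract information from the $d_2,d_3$ exact sequences. No $d_3$ touches row $r-1$, so $E_\infty^{p,r-1}=\Ker(d_2^{p,r-1})/\Image(d_2^{p-2,r})$, and the $d_3$-differential on row $r$ fits into the four-term exact sequence of $A[y_1,\ldots,y_c]$-modules
\[
0\to\bigoplus_p E_\infty^{p,r}\to\bigoplus_p\Ker(d_2^{p,r})\xrightarrow{d_3}\bigoplus_p E_3^{p+3,r-2}\to\bigoplus_p E_\infty^{p+3,r-2}\to 0,
\]
whose outer terms are finitely generated. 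Splicing this with the short exact sequences presenting each of the three $E_2$ rows in terms of the $\Ker$ and $\Image$ of the relevant $d_2$'s, a Noetherian chase reduces the entire statement (finite generation of the middle row, and the equivalence (a)$\iff$(b)) to finite generation of the two transverse images $\bigoplus_p\Image(d_2^{p-2,r})$ and $\bigoplus_p\Image(d_2^{p-2,r-1})$. The main obstacle I expect is establishing finite generation of these two images unconditionally, without assuming (a) or (b) a priori; I would do this by realizing each of them as the image of an $A[y_1,\ldots,y_c]$-linear map out of a subquotient of the finitely generated module $\bigoplus_n H^n$, exploiting the naturality of the Eisenbud operators and their compatibility with the spectral sequence differentials, and then invoking Noetherianity.
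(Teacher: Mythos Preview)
Your setup is correct up to the identification of the three nonzero rows, but you have reversed the direction of the differentials in the horizontal spectral sequence. Write $b:=\dim B$. With the indexing ${}^hE_2^{p,q}=\Ext_B^q(\Tor_p^A(M,N),\omega_B)$, the filtration on the total complex giving this $E_2$-page is by the injective-resolution degree $q$, so the differential goes $d_r\colon E_r^{p,q}\to E_r^{p-r+1,\,q+r}$; in particular $d_2\colon(p,q)\to(p-1,q+2)$. (This is the map $\Phi_p\colon\Ext_B^{b-2}(\Tor_p^A,\omega_B)\to\Ext_B^b(\Tor_{p-1}^A,\omega_B)$ appearing in the paper's proof.) Hence in the band $q\in\{b-2,b-1,b\}$ the \emph{only} possibly nonzero differential is this single $d_2$ from row $b-2$ to row $b$; row $b-1$ is untouched and there is no $d_3$ at all, so already $E_3=E_\infty$ for $p\gg 0$.

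With the correct direction the argument is immediate. Row $b-1$ satisfies $E_2^{p,b-1}=E_\infty^{p,b-1}$ and is therefore finitely generated as a subquotient of $H^\star$, which settles the middle case directly. For the outer rows, the single family $\Phi_p$ yields one four-term exact sequence of graded $A[y_1,\dots,y_c]$-modules
\[
0\longrightarrow\bigoplus_p\Ker(\Phi_p)\longrightarrow\bigoplus_p\Ext_B^{b-2}\xrightarrow{\ \Phi\ }\bigoplus_p\Ext_B^{b}\longrightarrow\bigoplus_p\Coker(\Phi_p)\longrightarrow 0,
\]
whose outer terms are precisely the $E_\infty$ pieces on rows $b-2$ and $b$, hence finitely generated. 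The equivalence (a)$\iff$(b) follows at once.

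By contrast, with your reversed differentials you are forced to show unconditionally that $\bigoplus_p\Image(d_2^{p,b})$ and $\bigoplus_p\Image(d_2^{p,b-1})$ are finitely generated. These are quotients of $\bigoplus_p E_2^{p,b}$ and $\bigoplus_p E_2^{p,b-1}$, which are exactly the modules whose finite generation is in question; they are not realizable as subquotients of $H^\star$ in any evident way, so the proposed device of ``realizing each of them as the image of a map out of a subquotient of $\bigoplus_n H^n$'' does not go through, and the closing Noetherian chase is circular.
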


\begin{proof}
Using the same argument as in the proof of Theorem~\ref{thm:main}, the abutment of the spectral sequence is obtained in the third page for the following components:
	\begin{equation}\label{Spectral-seq-page3}
		^{h}E_{\infty}^{p,q} = \, ^{h}E_{3}^{p,q} =
		\begin{cases}
			\Coker(\Phi_{p+1}) & \mbox{if } p \ge p_0 - 2 \mbox{ and } q = b,\\
			\Ext_B^q \hspace{-0.07cm} \left( \Tor^A_p(M,N), \omega_B\right) & \mbox{if } p \ge p_0 - 1 \mbox{ and } q=b-1, \\
			\Ker(\Phi_p)&  \mbox {if } p\ge p_0 -1\mbox{ and } q = b-2,\\
			0 									& \mbox {if } p\ge p_0 \mbox{ and } q \notin \{ b, b-1, b-2 \},
		\end{cases}
	\end{equation}
	where $ \Phi_p: \Ext_B^{b-2} \hspace{-0.07cm} \left( \Tor^A_p(M,N), \omega_B\right) \longrightarrow \Ext_B^b \hspace{-0.07cm} \left( \Tor^A_{p-1}(M,N), \omega_B\right) $ are the induced maps in the second page of the spectral sequence. For every $ q $, the graded $ A[y_1,\dots , y_c] $-module $ \bigoplus_{p} {}^{h}E_{\infty}^{p,q} $ is finitely generated, because the spectral sequence identifies it as a quotient of two graded submodules of $ H^\star $. Thus, according to \eqref{Spectral-seq-page3}, it shows that
	\begin{equation}\label{abutment-fg}
		\bigoplus_{p \ge p_0 - 1} \hspace{-0.2cm} \Coker(\Phi_p), \quad \bigoplus_{p \ge p_0 - 1}\Ext_B^{b-1} \hspace{-0.07cm} \left( \Tor^A_p(M,N), \omega_B\right)\quad \mbox{and }\; \bigoplus_{p \ge p_0-1 } \hspace{-0.2cm} \Ker(\Phi_p)
	\end{equation}
	are finitely generated over $ A[y_1,\dots , y_c] $. For completing the proof, we use \eqref{abutment-fg} and the exact sequence
	\begin{align*}
		0 \longrightarrow \bigoplus_{p} \Ker(\Phi_p) & \longrightarrow  \bigoplus_{p} \Ext_B^{b-2} \hspace{-0.07cm} \left( \Tor^A_p(M,N), \omega_B\right) \\
		& \longrightarrow  \bigoplus_{p}\Ext_B^b \hspace{-0.07cm} \left( \Tor^A_{p-1}(M,N), \omega_B\right) \longrightarrow \bigoplus_{p} \Coker(\Phi_p) \longrightarrow 0
	\end{align*}
	of graded modules over $ A[y_1,\dots , y_c] $.	
\end{proof}
\begin{remark}
Whenever $B$ is a standard graded Gorenstein ring over a field, and $W$ or $P$ has finite projective dimension over $B$, the regularity of $W$ is provided by the formula \cite[3.2]{CD08}:
$$
\reg (W)=\reg (B)+\indeg (P)-\min_j \left\{ \indeg \left( \Ext^j_B(W,P) \right) + j \right\}.
$$
Hence Theorem \ref{thm:main} offers other choices of $P$ that could be used to deduce the linearity of the regularity for high Tor modules in specific situations, or to derive its value. To emphasize this remark, we recall now what Theorem \ref{thm:main} and this fact says whenever $Q$ is a polynomial ring over a field.
\end{remark}

\begin{proposition}\label{cor:thmain}
	Let  $Q$ be a polynomial ring over a field, $A := Q/({\bf f})$, where $ {\bf f}  := f_1,\ldots,f_c $ is a homogeneous $ Q $-regular sequence. Let $M$, $N$ and $P$ be finitely generated graded $A$-modules and $r\in \mathbb{N}$. If
	$$
	\Ext_Q^q \hspace{-0.05cm}\big(\Tor^A_i (M,N), P\big)=0, \forall \, i \gg 0 \ {\rm if}\ q\not\in \{ r-1,r\} ,
	$$
then

{\rm (i)} 	$\Ext_Q^j \hspace{-0.05cm} \left( \Tor^A_\star (M,N), P\right)$ is a finitely generated graded $ A[y_1,\ldots , y_c] $-module, for any $j$.

{\rm (ii)} $\reg \hspace{-0.05cm} \left(\Tor^A_i (M,N)\right)=\indeg (P) - \min_j \left\{ \indeg \left(\Ext^j_Q\hspace{-0.05cm} \left( \Tor^A_i (M,N),P \right) \right) + j \right\}$, for any $i$.
\end{proposition}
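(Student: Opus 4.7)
The plan is that both assertions follow almost immediately from earlier results in the paper, once one verifies that the hypotheses of Theorem~\ref{thm:main} and of the displayed formula from \cite[3.2]{CD08} (recalled in the remark preceding the statement) are in force. There is no genuine obstacle; the work is essentially verifying that the polynomial ring setting makes the various finiteness conditions automatic.

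For part~(i), I would apply Theorem~\ref{thm:main} with $B:=Q$. Since $Q$ is a polynomial ring over a field, it has finite global dimension, so $\Ext_Q^q(N,P)=0$ for all $q\gg 0$, giving hypothesis~(i) of that theorem. Likewise, every finitely generated graded $Q$-module, in particular $M$, has finite projective dimension over $Q$, giving hypothesis~(ii). Hypothesis~(iii) is precisely the vanishing assumption in the statement of Proposition~\ref{cor:thmain}, with the same integer $r$. Therefore Theorem~\ref{thm:main} applies and shows that $\Ext_Q^j(\Tor^A_\star(M,N),P)$ is a finitely generated graded $A[y_1,\ldots,y_c]$-module for every $j$, which is exactly~(i).

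For part~(ii), I would invoke the formula from \cite[3.2]{CD08} recalled in the remark immediately preceding the statement: when $B$ is a standard graded Gorenstein algebra over a field and $W$ or $P$ has finite projective dimension over $B$, one has
\[
\reg(W)=\reg(B)+\indeg(P)-\min_j\bigl\{\indeg\bigl(\Ext^j_B(W,P)\bigr)+j\bigr\}.
\]
Applied with $B=Q$ (which is a polynomial ring, hence Gorenstein with $\reg(Q)=0$) and $W=\Tor^A_i(M,N)$ — a finitely generated graded $Q$-module, so automatically of finite projective dimension over $Q$ — this collapses to the stated equality
\[
\reg\bigl(\Tor^A_i(M,N)\bigr)=\indeg(P)-\min_j\bigl\{\indeg\bigl(\Ext^j_Q(\Tor^A_i(M,N),P)\bigr)+j\bigr\},
\]
valid for every $i$. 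No use of the vanishing hypothesis is even required for~(ii); that hypothesis only enters through~(i), where it guarantees the finiteness of the $\Ext$-algebra over $A[y_1,\ldots,y_c]$ that makes~(ii) asymptotically useful (via Proposition~\ref{prop:BCH}) for controlling $\reg\bigl(\Tor^A_i(M,N)\bigr)$ as $i$ varies. This completes the proof plan.
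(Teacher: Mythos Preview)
Your proof is correct and matches the paper's intended argument: the proposition is presented there as a direct specialization of Theorem~\ref{thm:main} (with $B=Q$) together with the formula from \cite[3.2]{CD08} recalled in the preceding remark, exactly as you describe. The paper does not write out a separate proof, so your verification that the hypotheses of Theorem~\ref{thm:main} become automatic over a polynomial ring, and that $\reg(Q)=0$ collapses the regularity formula to the stated one, is precisely what is needed.
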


\begin{remark}
When $Q$ is *local,	along the same lines as in the  proof  of Theorem~\ref{cor:lin-reg-Tor-dim-1}, Remark~\ref{rmk:thm:main-gen-version} yields the following. With Hypothesis~\ref{hyp:1}, further assume that $ r - 1 \le \depth\left( \Tor_i^A(M,N) \right) $ and $ \dim \left( \Tor_i^A(M,N) \right) \le r $  for all $ i \gg 0 $, where $ r \ge 1 $ is an integer. Then, for every $ l \in \{0,1\} $, there exist $ a_l \in \{ w_j : 1 \le j \le c \} $ and $ e_l \in \mathbb{Z} \cup \{ - \infty \} $ such that $ \reg\left( \Tor_{2i+l}^A(M,N) \right) = a_l \cdot i + e_l $ for all $ i \gg 0 $.
\end{remark}


\section{Examples on linearity of regularity}\label{sec:examples:linearity}

 Here we construct an example, which shows that the result in Theorem~\ref{cor:lin-reg-Tor-dim-1} does not necessarily hold true for higher dimension. In this example, though $ \reg\left( \Tor_i^A(M,N) \right) $ is asymptotically linear in $ i $, but unlike Ext modules, the leading term of the linear function for Tor depends on the modules $ M $ and $ N $.
 
 \begin{example}\label{exam:reg-Tor-Ext-setup1}
 	Let $ Q := K[Y,Z,V,W] $ be a polynomial ring with usual grading over a field $ K $, and $ A := Q/(Y^2,Z^2) $. Write $ A = K[y,z,v,w] $, where $ y, z, v $ and $ w $ are the residue classes of $ Y,Z,V $ and $ W $ respectively. Fix an integer $ m \ge 1 $. Set
 	\[
 	M := \Coker \left( \begin{bmatrix}
 	y & z & 0 & 0 \\
 	-v^m & -w^m & y & z
 	\end{bmatrix} : {\begin{array}{c}
 		A(-m)^2\\
 		\bigoplus \\
 		A(-1)^2
 		\end{array}} \longrightarrow {\begin{array}{c}
 		A(-m+1)\\
 		\bigoplus \\
 		A
 		\end{array}} \right)
 	\]
 	and $ N := A/(y,z) $. Then, for every $ i \ge 1 $, we have
 	\begin{enumerate}[{\rm (i)}]
 		\item $ \indeg\left( \Ext_A^i(M,N)  \right) = -i - m+1  $ and $ \reg\left( \Ext_A^i(M,N)  \right) = - i $.
 		\item $ \indeg\left( \Tor_i^A(M,N)  \right) = i $ and $ \reg\left( \Tor_i^A(M,N)  \right) = (m+1)i + (2m-2) $.		
 	\end{enumerate}
 \end{example}
	
	We postpone the proof of Example~\ref{exam:reg-Tor-Ext-setup1} until the end of this section.

	\begin{remark}
		In Example~\ref{exam:reg-Tor-Ext-setup1}(ii), though $ \reg\left( \Tor_i^A(M,N)  \right) $ is linear in $ i $, but the leading term is $ (m+1) $, which can be as large as possible depending on $ M $. In particular, it shows that the result in Theorem~\ref{cor:lin-reg-Tor-dim-1} is not necessarily true for higher dimension of $ \Tor_i^A(M,N) $. In the proof of Example~\ref{exam:reg-Tor-Ext-setup1}(ii), since $ \dim(\Ker(\Phi_i)) = 2 $, it follows that $ \dim\left( \Tor_i^A(M,N)  \right) = 2 $ for all $ i \ge 1 $.
	\end{remark}
	
	\begin{remark}
		In view of Theorem~\ref{thm:Chardin} and Example~\ref{exam:reg-Tor-Ext-setup1}(ii), by comparing the coefficients of $ i $ from both sides, we can conclude that the inequalities in Theorem~\ref{thm:Chardin} do not necessarily hold true for higher dimension of Tor modules.
	\end{remark}
	
	\begin{remark}\label{rmk:fg-tor}
		With Setup~\ref{setup1}, the graded modules
		\begin{equation*}
		\bigoplus_{i \ge 0} H_{A_+}^0 \hspace{-0.1cm}\left( \Tor_i^A(M,N)\right)\hspace{-0.1cm}^{\vee} \quad \text{and} \quad \bigoplus_{i \ge 0} H_{A_+}^2 \hspace{-0.1cm}\left( \Tor_i^A(M,N)\right)\hspace{-0.1cm}^{\vee}
		\end{equation*}
		are not finitely generated over $ A[y_1,\dots , y_c] $. Otherwise, using Proposition~\ref{prop:zero-two-loc-coh}, as in Theorem~\ref{cor:lin-reg-Tor-dim-1}, one obtains that $ \reg\left( \Tor_{2i}^A(M,N)  \right) $ is linear in $ i $ with leading coefficient $ 2 $, which is a contradiction because $ \reg\left( \Tor_{2i}^A(M,N)  \right) = 2(m+1)i + (2m-2) $.
	\end{remark}
	
\begin{setup}\label{setup1}
	Along with the hypotheses of Example~\ref{exam:reg-Tor-Ext-setup1}, for every integer $ n \ge 1 $, we set the matrices $ B_{2n} $ and $ C_{2n} $ of order $ 2n \times (2n+1) $ as follows:
	\[ 
	B_{2n} := \hspace{-0.1cm} \begin{bmatrix}
	y          & -z   &  0        & 0	&  \cdots   & 0 \\
	0          & y   & z       & 0		& \cdots   & 0 \\
	0          & 0   &  y        & -z    & \cdots   & 0 \\
	\vdots  & \vdots  & \vdots & \ddots  & \ddots  & \vdots \\
	0        &   0   &  0		 &    \cdots	& y     & z \\
	\end{bmatrix}\hspace{-0.1cm},\;
	C_{2n} := \hspace{-0.1cm}  \begin{bmatrix}
	v^m      & w^m   &  0        & 0	&  \cdots   & 0 \\
	0          & -v^m   & w^m       & 0		& \cdots   & 0 \\
	0          & 0   &  v^m        & w^m    & \cdots   & 0 \\
	\vdots  & \vdots  & \vdots & \ddots  & \ddots & \vdots \\
	0        &   0   &  0		 &    \cdots	& -v^m     & w^m \\
	\end{bmatrix}
	\]
	while for $ n \ge 0 $, we set the matrices $ B_{2n+1} $ and $ C_{2n+1} $ of order $ (2n+1) \times (2n+2) $ as follows:
	\[ 
	B_{2n+1} \hspace{-0.15cm} := \hspace{-0.2cm} \begin{bmatrix}
	y          & z   &  0        & 0	&  \cdots   & 0 \\
	0          & y   & -z       & 0		& \cdots   & 0 \\
	0          & 0   &  y        & z    & \cdots   & 0 \\
	\vdots  & \vdots  & \vdots & \ddots  & \ddots & \vdots \\
	0        &   0   &  0		 &    \cdots	& y     & z \\
	\end{bmatrix}\hspace{-0.1cm},\,
	C_{2n+1} \hspace{-0.15cm} := \hspace{-0.2cm} \begin{bmatrix}
	-v^m      \hspace{-0.2cm}& -w^m   \hspace{-0.2cm}&  0        \hspace{-0.2cm}& 0	\hspace{-0.2cm} &  \cdots   \hspace{-0.2cm}& 0 \\
	0          \hspace{-0.2cm}& v^m   \hspace{-0.2cm}& -w^m       \hspace{-0.2cm}& 0		\hspace{-0.2cm} & \cdots   \hspace{-0.2cm}& 0 \\
	0          \hspace{-0.2cm}& 0   \hspace{-0.2cm}&  -v^m        \hspace{-0.2cm}& -w^m    \hspace{-0.2cm} & \cdots   \hspace{-0.2cm}& 0 \\
	\vdots  \hspace{-0.2cm}& \vdots  \hspace{-0.2cm}& \vdots \hspace{-0.2cm}& \ddots  \hspace{-0.2cm}& \ddots \hspace{-0.2cm}& \vdots \\
	0        \hspace{-0.2cm}&   0   \hspace{-0.2cm}&  0		 \hspace{-0.2cm}&    \cdots	\hspace{-0.2cm} & -v^m     \hspace{-0.2cm}& -w^m \\
	\end{bmatrix}
	\]
	Note that $ B_n $ and $ C_n $ are matrices over $ A $ both of order $ n \times (n+1) $ for every $ n \ge 1 $. Finally, we set a block matrix $ D_n $ of order $ 2n \times (2n+2) $ as follows:
	\[
	D_n := \begin{bmatrix}
	B_n & O_n \\
	C_n & B_n
	\end{bmatrix} \quad \mbox{for every } n \ge 1,
	\]
	where $ O_n $ denotes the matrix of order $ n \times (n+1) $ with all entries $ 0 $.
\end{setup}

The following relations of $ B_n $ and $ C_n $ $ (n \ge 1) $ help us to build minimal free resolution of $ M $.

\begin{proposition}\label{prop:BC+CB=0}
	With Setup~\ref{setup1}, for every $ n \ge 1 $, $ B_n C_{n+1} + C_n B_{n+1} = 0 $.
\end{proposition}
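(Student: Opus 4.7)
The plan is to verify the identity by direct matrix multiplication, exploiting two structural observations. First, both $B_n$ and $C_n$ are upper bidiagonal, with nonzero entries only at positions $(i,i)$ and $(i,i+1)$. Hence each of the two products $B_n C_{n+1}$ and $C_n B_{n+1}$ is supported on positions $(i,j)$ with $j \in \{i, i+1, i+2\}$, so the identity reduces to checking three entries per row. Second, the entries of $B_n$ lie in $\{0, y, \pm z\}$ and those of $C_n$ in $\{0, \pm v^m, \pm w^m\}$; since $y, z, v^m, w^m$ commute pairwise in $A$, the cancellation at each entry becomes a purely combinatorial check of signs.

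Next, I would separate the cases $n$ even and $n$ odd, since the sign patterns of $B_n, C_n$ depend on parity. At the diagonal entry one has $(B_n C_{n+1} + C_n B_{n+1})_{i,i} = y (C_{n+1})_{i,i} + (C_n)_{i,i}\, y$, and the sign conventions of Setup~\ref{setup1} are arranged so that $(C_{n+1})_{i,i} = -(C_n)_{i,i}$, giving immediate cancellation. At the entry $(i,i+2)$, the sum is $(B_n)_{i,i+1}(C_{n+1})_{i+1,i+2} + (C_n)_{i,i+1}(B_{n+1})_{i+1,i+2}$, each term of the form $\pm zw^m$, which again cancel by the sign convention. At the middle position $(i,i+1)$, each product contributes two terms, giving four terms in total --- two of the form $\pm y w^m$ and two of the form $\pm z v^m$ --- and the sign patterns are arranged so that these pair up and cancel. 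The boundary rows $i=1$ and $i=n$ need a small separate check because some entries vanish, but the same signs work.

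The only real obstacle is bookkeeping: one must juggle four sign patterns (even/odd $n$ combined with even/odd row index $i$). A clean organization is to set $\epsilon_i := (-1)^i$ and to write each nonzero entry of $B_n, C_n$ as a scalar from $\{y, z, v^m, w^m\}$ times some $\epsilon_i^{\alpha}$; the cancellations then become uniform statements in the $\epsilon_i$. Conceptually, the identity reflects that the block matrix $D_n$ will satisfy $D_{n-1} D_n = 0$ in the forthcoming resolution of $M$ --- together with the Koszul-type relation $B_{n-1} B_n = 0$ coming from $y^2 = z^2 = 0$ in $A$ --- so this proposition is exactly the anti-commutation condition needed to assemble the resolution by a mapping-cone construction. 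At this stage of the paper, however, a direct entrywise verification is the most efficient route.
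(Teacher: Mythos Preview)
Your approach is correct and will work, but it differs from the paper's argument. The paper proceeds by induction on $n$: after verifying the base cases $n=1,2$ directly, it observes that for $n=2q$ the matrices $B_{2q+1}C_{2q+2}$ and $C_{2q+1}B_{2q+2}$ share a block-triangular shape in which the lower-right $(2q-1)\times(2q+1)$ block is exactly $B_{2q-1}C_{2q}$ (respectively $C_{2q-1}B_{2q}$), so the inductive hypothesis handles that block and only a $2\times 3$ upper-left corner needs explicit checking. You instead do a uniform entrywise verification, using the bidiagonal structure to reduce to the three positions $(i,i)$, $(i,i+1)$, $(i,i+2)$ in each row and then checking signs.

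Both routes are elementary. The paper's recursion is slightly slicker in that each inductive step touches only a fixed number of entries, while your argument front-loads the sign bookkeeping (four parity cases for $n$ and $i$) but then finishes in one pass with no induction. Your suggestion to encode the signs via $\epsilon_i=(-1)^i$ is a good way to make the four cases collapse into a single computation; carried out carefully, this is arguably cleaner than the paper's case split on the parity of $n$. Your closing remark about the role of this identity in the mapping-cone construction of $\mathbb{F}_\bullet^M$ is exactly right and matches how the paper uses the proposition in Lemma~\ref{lem:reso-setup1}(ii).
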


\begin{proof}
	We use induction on $ n $. It can be verified that $ B_1 C_2 + C_1 B_2 = 0 $ and $ B_2 C_3 + C_2 B_3 = 0 $. Assuming the equality $ B_p C_{p+1} + C_p B_{p+1} = 0 $ for $ p \le n $, we verify it for $ n+1 $. We may assume that $ n $ is even, say $ 2q $. The case when $ n $ is odd can be treated in a similar way. Note that
	\begin{align*}
	B_{2q+1} C_{2q+2} & = \hspace{-0.15cm}\left[\begin{array}{cc|ccccc}
	y v^m  & y w^m - z v^m & z w^m   			&  0       				& 0 		& \cdots   		& 0 \\
	0         & -y v^m         & y w^m -z v^m   	&  -z w^m   		& 0			&  \cdots   	& 0 \\ \hline
	0         & 0          		&   						&        					& 			&					&   \\
	\vdots  & \vdots 		&    						&  B_{2q-1} C_{2q} &    		&    				&   \\
	0 		& 0  				&  							& 								&  				&  					&   
	\end{array}	\right] \mbox{and}\\
	C_{2q+1} B_{2q+2} & = \hspace{-0.15cm}\left[\begin{array}{cc|ccccc}
	-y v^m  & z v^m - y w^m & -z w^m   			&  0       				& 0 		& \cdots   		& 0 \\
	0         & y v^m         & z v^m - y w^m	&  z w^m   		& 0			&  \cdots   	& 0 \\ \hline
	0         & 0          		&   						&        					& 			&					&   \\
	\vdots  & \vdots 		&    						&  C_{2q-1} B_{2q} &    		&    				&   \\
	0 		& 0  				&  							& 								&  				&  					&   
	\end{array}	\right].
	\end{align*}
	Hence induction hypothesis yields that $ B_{2q+1} C_{2q+2} + C_{2q+1} B_{2q+2} = 0 $.
\end{proof}

Here we construct graded minimal free resolutions of $ M $ and $ N $ over $ A $.

\begin{lemma}\label{lem:reso-setup1}
	With Setup~\ref{setup1}, the following statements hold true.\\
	{\rm (i)} A graded minimal free resolution of $ N $ over $ A $ is given by $ \mathbb{F}_{\bullet}^N : $
	\[
	\cdots \longrightarrow  A(-n)^{n+1} \xrightarrow{B_n} A(-n+1)^n \longrightarrow\cdots \xrightarrow{B_2} A(-1)^2 \stackrel{B_1}{\longrightarrow} A\longrightarrow 0
	\]
	{\rm (ii)} A graded minimal free resolution of $ M $ over $ A $ is given by $ \mathbb{F}_{\bullet}^M : $
\[
\cdots
\xrightarrow{D_{n+1}}
\hspace{-0.3cm}
\begin{array}{c}
						A(-m-n+1)^{n+1}\\
						\bigoplus\\
						A(-n)^{n+1}
						\end{array}
						\hspace{-0.2cm}
	\xrightarrow{\; D_n \;}		
\hspace{-0.2cm}
\begin{array}{c}
			A(-m-n+2)^n\\
			\bigoplus\\
			A(-n+1)^n
			\end{array}
			\hspace{-0.3cm}
			\longrightarrow
			\cdots
			\xrightarrow{\; D_1 \;}
			\hspace{-0.2cm}
			\begin{array}{c}
A(-m+1)\\
\bigoplus\\
A
\end{array}
\hspace{-0.3cm}
\longrightarrow
0
\]
\end{lemma}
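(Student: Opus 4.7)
The plan is to verify that each displayed complex is (a) a complex of graded free $A$-modules with differentials having entries in the maximal irrelevant ideal (hence minimal), and (b) acyclic with the correct $H_0$. Part (i) will be handled by a classical tensor-product identification; part (ii) will then follow from (i) via a mapping cone argument built on Proposition~\ref{prop:BC+CB=0}.

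For part (i), I would use the decomposition $A = R \otimes_K K[v,w]$, where $R := K[y]/(y^2) \otimes_K K[z]/(z^2) = K[y,z]/(y^2,z^2)$, under which $N = A/(y,z)$ identifies with the base change of the residue field of $R$. Over each hypersurface ring $K[y]/(y^2)$ and $K[z]/(z^2)$, the minimal graded resolution of the residue field is the periodic complex whose differential is multiplication by the single variable; the total tensor product over $K$ is a minimal graded free resolution of $K$ over $R$ with a free module of rank $n+1$ in homological degree $n$. A direct computation using the Koszul sign convention produces exactly the matrices $B_n$ as differentials. Base-changing via $-\otimes_K K[v,w]$ preserves exactness (by $K$-flatness), yielding $\mathbb{F}_\bullet^N$. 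Minimality is immediate since every entry of $B_n$ lies in $(y,z)$.

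For part (ii), the presentation $D_1$ shows that $M$ fits into a short exact sequence
$$
0 \longrightarrow N \longrightarrow M \longrightarrow N(-m+1) \longrightarrow 0.
$$
If $e_1, e_2$ denote the basis vectors of the free cover (in degrees $m-1$ and $0$), then $Ae_2 \subset M$ is killed by $y$ and $z$, hence isomorphic to $N$; passing to the quotient $M/Ae_2$, the generator $\bar{e}_1$ satisfies $y\bar{e}_1 = v^m e_2 = 0$ and $z\bar{e}_1 = w^m e_2 = 0$, so $M/Ae_2 \cong N(-m+1)$. I then interpret $\mathbb{F}_\bullet^M$ as the mapping cone of the chain map with components $C_n : \mathbb{F}^{N(-m+1)}_n \to \mathbb{F}^N_{n-1}$: the block form $D_n = \bigl(\begin{smallmatrix} B_n & 0 \\ C_n & B_n \end{smallmatrix}\bigr)$ reduces the complex condition $D_{n-1}D_n = 0$ to $B_{n-1}B_n = 0$ (part (i)) together with $B_{n-1}C_n + C_{n-1}B_n = 0$, which is exactly Proposition~\ref{prop:BC+CB=0}.

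The cone structure now gives a short exact sequence of complexes
$$
0 \longrightarrow \mathbb{F}_\bullet^N \longrightarrow \mathbb{F}_\bullet^M \longrightarrow \mathbb{F}_\bullet^{N(-m+1)} \longrightarrow 0,
$$
whose long exact sequence in homology, together with the acyclicity of $\mathbb{F}_\bullet^N$ and $\mathbb{F}_\bullet^{N(-m+1)}$ in positive degree from part (i), forces $H_i(\mathbb{F}_\bullet^M) = 0$ for all $i \ge 1$. Finally, $H_0(\mathbb{F}_\bullet^M) = \mathrm{coker}(D_1) = M$ by construction, and minimality is clear since every entry of $D_n$ lies in the maximal irrelevant ideal. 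The main point requiring care is the sign bookkeeping in identifying the tensor-product differential with $B_n$ in part (i); once that identification is in place, part (ii) is a formal mapping cone manipulation that relies only on Proposition~\ref{prop:BC+CB=0} and the acyclicity established in part (i).
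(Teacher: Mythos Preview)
Your proof is correct and follows essentially the same approach as the paper's: for (i) you build the resolution as a tensor product of the two periodic resolutions (the paper tensors $\mathbb{F}_\bullet^{A/(y)}$ and $\mathbb{F}_\bullet^{A/(z)}$ over $A$ and invokes the vanishing of $\Tor_i^A(A/(y),A/(z))$, whereas you tensor over $K$ and then flat base-change by $K[v,w]$---an equivalent construction), and for (ii) both arguments realize $\mathbb{F}_\bullet^M$ as the mapping cone of the chain map $\mathbb{F}_\bullet^N[1](-m+1)\to\mathbb{F}_\bullet^N$ with components given by the $C_n$, using Proposition~\ref{prop:BC+CB=0} for the chain-map condition and the cone's long exact sequence for acyclicity. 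The only cosmetic difference is that you phrase the cone via the short exact sequence of complexes, while the paper cites the cone formalism from Weibel directly.
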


\begin{proof}
	(i) Set $ N_1 := A/(y) $ and $ N_2 := A/(z) $. Clearly,
	\begin{align*}
		&\mathbb{F}_{\bullet}^{N_1} : \;\; \cdots \xrightarrow{\; y \;} A(-2) \xrightarrow{\; y \;} A(-1) \xrightarrow{\; y \;} A \rightarrow 0		 \quad \mbox{and}\\
		&\mathbb{F}_{\bullet}^{N_2} : \;\; \cdots \xrightarrow{\; z \;} A(-2) \xrightarrow{\; z \;} A(-1) \xrightarrow{\; z \;} A \rightarrow 0
	\end{align*}
	are graded minimal $ A $-free resolutions of $ N_1 $ and $ N_2 $ respectively. Since $ \mathbb{F}_{\bullet}^{N_1} \otimes_A N_2 $ is acyclic, it follows that $ \Tor_i^A(N_1,N_2) = 0 $ for all $ i \ge 1 $. Let $ \mathbb{F}_{\bullet} $ be the tensor product of $ \mathbb{F}_{\bullet}^{N_1} $ and $ \mathbb{F}_{\bullet}^{N_2} $ over $ A $; see \cite[pp~614]{Rot09}. Note that the homology $ H_i(\mathbb{F}_{\bullet}) = \Tor_i^A(N_1,N_2) $ (cf. \cite[10.22]{Rot09}). Thus, since $ H_i(\mathbb{F}_{\bullet}) = 0 $ for all $ i \ge 1 $, $ \mathbb{F}_{\bullet} $ provides a free resolution of $ N_1 \otimes_A N_2 = A/(y,z) = N $. It follows from the definition of tensor product of complexes that $ \mathbb{F}_{\bullet} $ is same as the desired free resolution $ \mathbb{F}_{\bullet}^N $.
	
	(ii) Set $ \mathbb{G} := \mathbb{F}_{\bullet}^N $, the resolution shown in (i), and $ \mathbb{H} := \mathbb{G}[1](-m+1) $, i.e.,
	\[
		\mathbb{H}_n = \mathbb{G}_{n+1}(-m+1) \quad\mbox{and}\quad d_n^{\mathbb{H}} = (-1) d_{n+1}^{\mathbb{G}} \quad \mbox{for every } n;
	\]
	see \cite[1.2.8]{Wei94}. We construct a map $ f : \mathbb{H} \to \mathbb{G} $ as follows: the $ n $th component $ f_n : \mathbb{H}_n \to \mathbb{G}_n $ of $ f $ is defined by $ (-1) C_{n+1} $.	By virtue of Proposition~\ref{prop:BC+CB=0}, $ f $ is a homogeneous map of chain complexes. We consider the mapping cone $ \Cone(f) $; see \cite[1.5.1]{Wei94} for its definition. Note that $ \Cone(f)_n = \mathbb{H}_{n-1} \oplus \mathbb{G}_n $ with the $ n $th differential
	\[
		\begin{bmatrix}
		- d_{n-1}^{\mathbb{H}} & 0 \\
		- f_{n-1} & d_n^{\mathbb{G}}
		\end{bmatrix} : {\begin{array}{lcl}
			\mathbb{H}_{n-1} & \longrightarrow & \mathbb{H}_{n-2} \\
			\bigoplus & \searrow & \bigoplus \\
			\mathbb{G}_n & \longrightarrow & \mathbb{G}_{n-1}
			\end{array}}
	\]
	which is nothing but $ D_n $ as given in the desired resolution. Since $ H_n(\mathbb{G}) = 0 = H_{n-1}(\mathbb{H}) $ for every $ n \ge 1 $, in view of \cite[1.5.2]{Wei94}, we have $ H_n(\Cone(f)) = 0 $ for every $ n \ge 1 $. Hence $ \Cone(f) $ provides the desired free resolution $ \mathbb{F}_{\bullet}^M $.
\end{proof}

\begin{remark}
Part (i) in this Lemma could also be deduced from \cite[6.1.8]{Av98} and tools from \cite{EP16} could likely help to improve and shorten our elementary arguments for (ii). This also applies to Lemma \ref{lem:reso-setup2}.
\end{remark}

\begin{para}[Computations of $ \Tor_i^A(M,N) $ and $ \Ext_A^i(M,N) $ with Setup~\ref{setup1}]\label{para:example1-tor-ext}
	In view of 
	Lemma~\ref{lem:reso-setup1}(ii),  we obtain that the complex $ \mathbb{F}_{\bullet}^M \otimes_A N $ is given by
	\[
	\cdots
	\xrightarrow{D'_{n+1}}
	\hspace{-0.3cm}
	\begin{array}{c}
	N(-m-n+1)^{n+1}\\
	\bigoplus\\
	N(-n)^{n+1}
	\end{array}
\hspace{-0.2cm}	 \xrightarrow{\; D'_n \;} \hspace{-0.2cm}
\begin{array}{c}
	N(-m-n+2)^n\\
	\bigoplus\\
	N(-n+1)^n
	\end{array}
	\hspace{-0.3cm}
	\longrightarrow
	\cdots
	\xrightarrow{\; D'_1 \;} \hspace{-0.2cm} 
	\begin{array}{c}
	N(-m+1)\\
	\bigoplus\\
	N
	\end{array}
	\hspace{-0.3cm}
	\longrightarrow
	0
	\]
	where
	\[
		D'_n := D_n \otimes_A A/(y,z) = \begin{bmatrix}
			O_n & O_n \\
			C_n & O_n
		\end{bmatrix} \quad \mbox{for every } n \ge 1.
	\]
	This yields that
	\begin{equation}\label{Tor-n-setup1}
		\Tor_n^A(M,N) = \begin{array}{l}
		\Ker\Big( N(-m-n+1)^{n+1} \xrightarrow{\; C_n \;}  N(-n+1)^n  \Big) \\
		\;\; \bigoplus \\
		\Coker\Big(  N(-m-n)^{n+2} \xrightarrow{\; C_{n+1} \;}  N(-n)^{n+1} \Big)
		\end{array} \mbox{ for } n \ge 1.
	\end{equation}
	It follows that $ \Tor_n^A(M,N)_{\mu} = 0 $ for every $ \mu < n $, and $ \Tor_n^A(M,N)_n \neq 0 $. Therefore
	\begin{equation}\label{eqn:indeg-tor}
		\indeg\left( \Tor_n^A(M,N)  \right) = n \quad \mbox{for every } \; n \ge 1.
	\end{equation}
	To compute Ext modules, consider the complex $ \Hom_A(\mathbb{F}_{\bullet}^M,N) $, which is given by
	\[
	0 \longrightarrow \hspace{-0.3cm} \begin{array}{c}
	N(m-1)\\
	\bigoplus\\
	N
	\end{array}
	\hspace{-0.2cm} \xrightarrow{ (D'_1)^t } \cdots \longrightarrow \hspace{-0.3cm}
	\begin{array}{c}
	N(m+n-2)^n\\
	\bigoplus\\
	N(n-1)^n
	\end{array}
	\hspace{-0.2cm} \xrightarrow{ (D'_n)^t } \hspace{-0.2cm}
	\begin{array}{c}
	N(m+n-1)^{n+1}\\
	\bigoplus\\
	N(n)^{n+1}
	\end{array}
	\hspace{-0.3cm} \xrightarrow{(D'_{n+1})^t} \cdots
	\]
	where $ (-)^t $ stands for the transpose of a matrix. Hence it can be observed that
	\begin{equation}\label{Ext-n-setup1}
		\Ext^n_A(M,N) = \begin{array}{l}
	\Coker\Big( N(n-1)^n \xrightarrow{\;\; C^t_n \;\;} N(m+n-1)^{n+1} \Big) \\
	\;\; \bigoplus \\
	\; \Ker\Big( N(n)^{n+1} \xrightarrow{\;\; C^t_{n+1} \;\;} N(m+n)^{n+2} \Big)
	\end{array} \mbox{ for } n \ge 1.
	\end{equation}
\end{para}

We are now able to provide a proof for the example.

\begin{proof}[Proof of Example~\ref{exam:reg-Tor-Ext-setup1}]
	In view of \eqref{Tor-n-setup1} and \eqref{Ext-n-setup1}, it suffices to study the regularity of  kernel and cokernel of the following maps :
	\[
		\Phi_n \hspace{-0.05cm}:= \hspace{-0.05cm} N(-m-n+1)^{n+1} \hspace{-0.05cm} \xrightarrow{ C_n } \hspace{-0.05cm} N(-n+1)^n\hspace{-0.05cm},\; \Psi_n \hspace{-0.05cm} := \hspace{-0.05cm} N(n-1)^n \hspace{-0.05cm} \xrightarrow{ C_n^t } \hspace{-0.05cm} N(m+n-1)^{n+1}
	\]
	for all $ n \ge 1 $. Since $ N $ is annihilated by $ (y,z) $, we can substitute $ N $ with $ R:= K[V,W] $, and $ v^m, w^m $ in the entries of the matrices $ C_n $ with $ V^m, W^m $ respectively.
	
	(i) Since $I_n(C^t_n) $, the ideal of maximal minors of $C_n^t $, has depth $ = 2 $, by the Hilbert-Burch Theorem (cf. \cite[Thm. 1.4.17]{BH98}), we have a graded minimal $ R $-free resolution of $ \Coker(\Psi_n) $ :
	\begin{equation}\label{eqn:Hilbert-Burch}
		0 \longrightarrow R(n-1)^n \xrightarrow{\; C^t_n \;} R(m+n-1)^{n+1} \stackrel{\pi}{\longrightarrow} \Coker(\Psi_n) = I_n(C^t_n) \longrightarrow 0,
	\end{equation}
	where $ \pi $ sends the standard basis element $ e_i $ to $ (-1)^i \delta_i $, and $ \delta_i $ denotes the $ n \times n $ minor of $ C_n^t $ with the $ i $th row deleted for $ 1 \le i \le n+1 $. Therefore, for every $ n \ge 1 $, one obtains that $ \Ker(\Psi_n) = 0 $, $ \indeg(\Coker(\Psi_n)) = -m-n+1 $ and $ \reg(\Coker(\Psi_n)) = -n $. Thus it follows from \eqref{Ext-n-setup1} that for every $ n \ge 1 $,
	\begin{align*}
		\indeg(\Ext^n_A(M,N)) &= \min\{ \indeg(\Coker(\Psi_n)), \indeg(\Ker(\Psi_{n+1})) \} \\
		& =  -n-m+1 \quad \mbox{and}\\
		\reg(\Ext^n_A(M,N)) & = \hspace{-0.05cm}\max\{ \reg(\Coker(\Psi_n)), \reg(\Ker(\Psi_{n+1})) \} = -n.
	\end{align*}
	
	(ii) By \eqref{eqn:Hilbert-Burch}, since $ \deg(\delta_i) = mn $, we get an exact sequence of graded $ R $-modules:
	\[
		0 \longrightarrow R(n-1)^n \xrightarrow{ C^t_n } R(m+n-1)^{n+1} \xrightarrow{E_n := \left[-\delta_1 \; \delta_2 \;\cdots \; (-1)^{n+1}\delta_{n+1}\right]} R(mn+m+n-1).
	\]
	Applying $ \Hom_{R}(-,R) $, we obtain a complex
	\begin{equation}\label{eqn:BE}
		0 \longrightarrow R(-mn-m-n+1) \xrightarrow{\; E_n^t \;} R(-m-n+1)^{n+1} \xrightarrow{\; C_n \;} R(-n+1)^n \longrightarrow 0
	\end{equation}
	which is acyclic, due to Buchsbaum-Eisenbud acyclicity criterion \cite[Thm. 1.4.13]{BH98}.	Thus \eqref{eqn:BE} is a graded minimal $ R $-free resolution of $ \Coker(\Phi_n) $, and $ \Ker(\Phi_n) \cong R(-mn-m-n+1) $.

	Hence it follows from \eqref{Tor-n-setup1} that for every $ n \ge 1 $,
	\begin{align}\label{eqn:reg-tor}
		\reg\left(\Tor_n^A(M,N)\right) & = \max \lbrace \reg(\Ker(\Phi_n)), \reg(\Coker(\Phi_{n+1})) \rbrace \\
		& = \max\{ (m+1)n+m-1, (m+1)(n+1)+m-3 \} \nonumber\\
		& =	(m+1)n+(2m-2). \nonumber
	\end{align}

	Thus \eqref{eqn:indeg-tor} and \eqref{eqn:reg-tor} yield the assertion (ii).
\end{proof}

\section{Examples on nonlinearity of regularity}\label{sec:examples:nonlinearity}

The aim of this section is to show that $ \reg\left( \Tor_{2i}^A(M,N) \right) $ and $ \reg\left( \Tor_{2i+1}^A(M,N) \right) $ need not be asymptotically linear in $ i $ even over a complete intersection ring $ A $. We give the following example over a codimension three complete intersection ring in positive characteristic.

\begin{example}\label{exam:reg-Tor-Ext-setup2}
	Let $ Q := K[X,Y,Z,U,V,W] $ be a standard graded polynomial ring over a field $ K $ of characteristic $ 2 $, and $ A := Q/(X^2,Y^2,Z^2) $. We write $ A = K[x,y,z,u,v,w] $, where $ x,y,z,u,v $ and $ w $ are the residue classes of $ X,Y,Z,U,V $ and $ W $ respectively. Set
	\[
	M := \Coker \left( \begin{bmatrix}
	x & y & z & 0 & 0 & 0 \\
	u & v & w & x & y & z 
	\end{bmatrix} : A(-1)^6 \longrightarrow A^2 \right)
	\quad \mbox{and} \quad N:= A/(x,y,z).
	\]
	Then, for every $ n \ge 1 $, we have
	\begin{enumerate}[{\rm (i)}]
		\item $ \indeg\left( \Ext_A^n(M,N)  \right) = -n $ and $ \reg\left( \Ext_A^n(M,N)  \right) = - n $.
		\item $ \indeg\left( \Tor_n^A(M,N)  \right) = n $ and $ \reg\left( \Tor_n^A(M,N)  \right) = n + f(n) $, where
		\[
		f(n) := \left\{ \begin{array}{ll}
		2^{l+1} - 2 & \mbox{if } n = 2^l - 1 \\
		2^{l+1} - 1 & \mbox{if } 2^l \le n \le 2^{l+1} - 2
		\end{array}\right. \quad \mbox{for all integers $ l \ge 1 $}.
		\]
	\end{enumerate}
\end{example}

\begin{remark}
	Example~\ref{exam:reg-Tor-Ext-setup2}(ii) shows that $ \reg( \Tor_{2n}^A(M,N)) $ and $ \reg( \Tor_{2n+1}^A(M,N) ) $ are not asymptotically linear as functions of $ n $. 
	Moreover, one obtains that $ n + 1 \le f(n) \le 2n $ for every $ n \ge 1 $, while $ f(n) = n + 1 $ if $ n = 2^{l+1} - 2 $, and $ f(n) = 2n $ if $ n = 2^l - 1 $ for $ l \ge 1 $. Therefore
	\[
		\liminf_{n \to \infty } \dfrac{\reg(\Tor_n^A(M,N))}{n} = 2 \quad \mbox{and} \quad \limsup_{n \to \infty } \dfrac{\reg(\Tor_n^A(M,N))}{n} = 3.
	\]
	Furthermore, for any $ \alpha \in (2,3) $, by choosing any subsequence $ n_{\alpha}(l) $ such that $ | n_{\alpha}(l) - \lfloor 2^l/(\alpha - 1) \rfloor | $ is bounded for all $ l \ge 1 $,
	\[
		\lim_{l \to \infty } \dfrac{\reg(\Tor_{n_{\alpha}(l)}^A(M,N))}{n_{\alpha}(l)} = \alpha.
	\]
	In particular, $ n_{\alpha}(l) $ can be a sequence of even (resp. odd) integers.
	Thus both
	\[
		\{\reg(\Tor_{2n}^A(M,N))/(2n) : n \ge 1\} \quad \mbox{and} \quad \{\reg(\Tor_{2n+1}^A(M,N))/(2n+1) : n \ge 1\}
	\]
	are dense sets in $ [2,3] $.
\end{remark}

 Before proving the claims in Example~\ref{exam:reg-Tor-Ext-setup2}, we need to setup some notations and provide some preliminary lemmas.
 
\begin{setup}\label{setup2}
	Along with the hypotheses of Example~\ref{exam:reg-Tor-Ext-setup2}, for every integer $ n \ge 1 $, we set the matrices $ B_n $ and $ C_n $ of order $ n \times (n+1) $ as follows:
	\[ 
	B_n :=  \begin{bmatrix}
	y          & z   &  0        & 0	&  \cdots   & 0 \\
	0          & y   & z       & 0		& \cdots   & 0 \\
	0          & 0   &  y        & z    & \cdots   & 0 \\
	\vdots  & \vdots  & \vdots & \ddots  & \ddots & \vdots \\
	0        &   0   &  0		 &    \cdots	& y     & z \\
	\end{bmatrix}  \mbox{  and  }
	C_n := \begin{bmatrix}
	v          & w   &  0        & 0	&  \cdots   & 0 \\
	0          & v   & w       & 0		& \cdots   & 0 \\
	0          & 0   &  v        & w    & \cdots   & 0 \\
	\vdots  & \vdots  & \vdots & \ddots  & \ddots & \vdots \\
	0        &   0   &  0		 &    \cdots	& v     & w \\
	\end{bmatrix}.
	\]
	Setting $ I_n $ as the $ n \times n $ identity matrix, we construct the block matrices $ E_n $ and $ F_n $ both of order $ {n+1 \choose 2} \times {n+2 \choose 2} $ as follows:
	\[ 
	E_n := 
	\left[\begin{array}{c|c|c|c|c}
	x I_1 & B_1 	&      		&  			&    		\\ \hline
	& x I_2   & B_2	 & 			& 			\\ \hline
	&			& \ddots & \ddots &   		\\ \hline
	& 			& 			&  x I_n	&  B_n 
	\end{array}	\right]  \mbox{ and }
	F_n := \left[\begin{array}{c|c|c|c|c}
	u I_1 & C_1 	&     		&  			&    		\\ \hline
	& u I_2 & C_2		& 			& 			\\ \hline
	&			& \ddots & \ddots &   		\\ \hline
	& 			&			&  u I_n	&  C_n 
	\end{array}	\right]
	\]

	Finally, we set the block matrix
	\[
	D_n := \left[\begin{array}{c|c}
	E_n & \\ \hline
	F_n & E_n
	\end{array}	\right]
	\quad \mbox{of order }\; 2{n+1 \choose 2} \times 2{n+2 \choose 2}.
	\]
	Here the empty blocks in $ E_n $, $ F_n $ and $ D_n $ are filled with zero matrices of suitable order.
\end{setup}

In view of Proposition~\ref{prop:BC+CB=0}, replacing $ v^m $ and $ w^m $ by $ v $ and $ w $ respectively, since $ \Char(K) = 2 $, one obtains the following relations.

\begin{remark}\label{rmk:BC+CB=0}
	With Setup~\ref{setup2}, $ B_n C_{n+1} + C_n B_{n+1} = 0 $ for every $ n \ge 1 $.
\end{remark}

A similar relation holds for $ E_n $ and $ F_n $, which helps us to build minimal free resolution of $ M $.

\begin{proposition}\label{prop:EF+FE=0}
	With Setup~\ref{setup2}, $ E_n F_{n+1} + F_n E_{n+1} = 0 $ for every $ n \ge 1 $.
\end{proposition}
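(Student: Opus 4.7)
The strategy is a direct block computation, exploiting the staircase structure of $E_n$ and $F_n$ together with the characteristic-$2$ hypothesis and the already-established identities $B_k C_{k+1} + C_k B_{k+1} = 0$ (Remark~\ref{rmk:BC+CB=0}).

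First, I would index the nonzero blocks explicitly. With row blocks $k = 1,\ldots,n$ and column blocks $j = 1,\ldots,n+1$, the $(k,j)$-block of $E_n$ is $xI_k$ if $j=k$, is $B_k$ if $j=k+1$, and is zero otherwise; similarly for $F_n$ with $u$ and $C$ in place of $x$ and $B$. Since $E_n$ is $\binom{n+1}{2}\times \binom{n+2}{2}$ and $F_{n+1}$ is $\binom{n+2}{2}\times \binom{n+3}{2}$, the product $E_n F_{n+1}$ is of shape $\binom{n+1}{2}\times \binom{n+3}{2}$, matching $F_n E_{n+1}$.

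Next, the plan is to expand the $(k,\ell)$-block of $E_n F_{n+1}$ as a sum $\sum_j (E_n)_{k,j}(F_{n+1})_{j,\ell}$, which receives contributions only from $j \in \{k,k+1\}$ and $\ell \in \{j, j+1\}$. A short enumeration gives
\[
(E_n F_{n+1})_{k,\ell} = \begin{cases} xu\, I_k & \ell=k,\\ xC_k + uB_k & \ell=k+1,\\ B_k C_{k+1} & \ell=k+2,\\ 0 & \text{otherwise,}\end{cases}
\]
and symmetrically
\[
(F_n E_{n+1})_{k,\ell} = \begin{cases} xu\, I_k & \ell=k,\\ uB_k + xC_k & \ell=k+1,\\ C_k B_{k+1} & \ell=k+2,\\ 0 & \text{otherwise.}\end{cases}
\]
Adding the two, the $\ell=k$ blocks contribute $2xu\,I_k$, the $\ell=k+1$ blocks contribute $2(xC_k+uB_k)$, and the $\ell=k+2$ blocks contribute $B_k C_{k+1} + C_k B_{k+1}$.

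The conclusion then follows from two inputs: the first two types of blocks vanish because $\Char(K)=2$ (making every doubled quantity zero), and the third vanishes by Remark~\ref{rmk:BC+CB=0}. The only subtlety to be careful with is bookkeeping: verifying that no additional nonzero contribution arises at the top or bottom of the staircase (when $k=1$ or $k=n$), and confirming that the column-block index $\ell=k+2$ is still within range $\ell \le n+2$, which it is. I do not expect a genuine obstacle; the result is essentially the staircase analogue of Proposition~\ref{prop:BC+CB=0}, now combined with characteristic $2$.
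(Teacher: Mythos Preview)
Your proof is correct and follows essentially the same approach as the paper: both compute the block entries of $E_nF_{n+1}$ and $F_nE_{n+1}$ directly from the staircase structure, obtaining the diagonal blocks $xuI_k$, the superdiagonal blocks $xC_k+uB_k$, and the super-superdiagonal blocks $B_kC_{k+1}$ (resp.\ $C_kB_{k+1}$), and then conclude using $\Char(K)=2$ together with Remark~\ref{rmk:BC+CB=0}. Your version simply makes the block indexing more explicit than the paper's displayed matrices.
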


\begin{proof}
	For every $ n \ge 1 $, the block matrix multiplication yields that
	\begin{align*}
	E_n F_{n+1} & = \left[\begin{array}{c|c|c|c|c|c}
	xu I_1 & x C_1 + u B_1	& B_1 C_2	&      		&    		\\ \hline
	& xu I_2 	& x C_2 + u B_2	& B_2 C_3 &  			\\ \hline
	&			& \ddots & \ddots & \ddots 	\\ \hline
	& 			& 			&  xu I_n & x C_n + u B_n & B_n C_{n+1}
	\end{array}	\right], \\
	F_n E_{n+1} & = \left[\begin{array}{c|c|c|c|c|c}
	ux I_1 & u B_1 + x C_1 & C_1 B_2	&      		&    		\\ \hline
	& ux I_2 	& u B_2 + x C_2	& C_2 B_3 & 			\\ \hline
	&			& \ddots & \ddots & \ddots 	\\ \hline
	& 			& 			&  ux I_n & u B_n + x C_n & C_n B_{n+1}
	\end{array}	\right].
	\end{align*}

	Hence `$ \Char(K) = 2 $' and Remark~\ref{rmk:BC+CB=0} yield that $ E_n F_{n+1} + F_n E_{n+1} = 0 $ for every $ n \ge 1 $.
\end{proof}

We compute $ \Tor_n^A(M,N) $ $ (n \ge 1) $ by constructing a graded minimal free resolution of $ M $.

\begin{lemma}\label{lem:reso-setup2}
	With Setup~\ref{setup2}, the following statements hold true.\\
	{\rm (i)} A graded minimal free resolution of $ N $ over $ A $ is given by $ \mathbb{F}_{\bullet}^N : $
	\[
	\cdots \longrightarrow A(-n)^{{n+2 \choose 2}} \xrightarrow{\; E_n \;} A(-n+1)^{{n+1 \choose 2}} \longrightarrow \cdots \xrightarrow{\; E_2 \;} A(-1)^3 \xrightarrow{\; E_1 \;} A \longrightarrow 0
	\]
	{\rm (ii)} A graded minimal free resolution of $ M $ over $ A $ is given by $ \mathbb{F}_{\bullet}^M : $
	\[
	\cdots \xrightarrow{D_{n+1}} \hspace{-0.2cm}
	\begin{array}{c}
	A(-n)^{{n+2 \choose 2}} \\
	\bigoplus \quad \quad \\
	A(-n)^{{n+2 \choose 2}} 
	\end{array}
	\hspace{-0.2cm}
	\xrightarrow{\; D_n \;} \begin{array}{c}
	A(-n+1)^{{n+1 \choose 2}} \\
	\bigoplus \quad \quad \\
	A(-n+1)^{{n+1 \choose 2}} 
	\end{array}
	 \hspace{-0.2cm}
	\hspace{-0.2cm}
	\longrightarrow \cdots
	\xrightarrow{ D_2 } \hspace{-0.2cm}
	\begin{array}{c}
	A(-1)^3\\
	\bigoplus\\
	A(-1)^3
	\end{array}
	\hspace{-0.2cm} \xrightarrow{\; D_1 \;}
\hspace{-0.1cm}
	\begin{array}{c}
	A\\
	\bigoplus\\
	A
	\end{array}
	\hspace{-0.1cm} \longrightarrow 0
	\]
\end{lemma}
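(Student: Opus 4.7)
The plan is to parallel the two-step strategy of Lemma~\ref{lem:reso-setup1}: build $\mathbb{F}^N_\bullet$ in part~(i) as an iterated tensor product of linear Koszul-type resolutions, and then obtain $\mathbb{F}^M_\bullet$ in part~(ii) as a mapping cone (equivalently, a horseshoe-lemma pasting) associated with the short exact sequence $0 \to N \to M \to N \to 0$ implicit in the presentation of $M$ (the second generator of $M$ is annihilated by $(x,y,z)$, and modding it out one recovers $N$ again).

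For part~(i), first I would show that for each $w \in \{x,y,z\}$, the complex
\[
\mathcal{C}_w \,:\, \cdots \xrightarrow{w} A(-n) \xrightarrow{w} \cdots \xrightarrow{w} A(-1) \xrightarrow{w} A \longrightarrow 0
\]
is a graded minimal free resolution of $A/(w)$; this rests on the identity $\ann_A(w) = (w)$, which follows from the fact that $w$ together with the two other squares forms a regular sequence in $Q$. Next, $\Tor^A_i(A/(x), A/(y,z)) = 0$ for $i \ge 1$ because $\mathcal{C}_x \otimes_A A/(y,z)$ has the same shape as $\mathcal{C}_x$ with $A$ replaced by $A/(y,z) \cong K[x,u,v,w]/(x^2)$, in which $\ann(x) = (x)$ again; hence the tensored complex is still acyclic. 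Iterating with the third factor, the triple tensor $\mathcal{C}_x \otimes_A \mathcal{C}_y \otimes_A \mathcal{C}_z$ is a free resolution of $A/(x,y,z) = N$. A direct bookkeeping then identifies its $n$-th term, of total rank $\sum_{j=0}^{n}(j+1) = \binom{n+2}{2}$, with the block decomposition of $E_n$: the $xI_k$ blocks come from the $\mathcal{C}_x$ factor, while the $B_k$ blocks come from the resolution of $A/(y,z)$ provided by the direct analogue of Lemma~\ref{lem:reso-setup1}(i).

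For part~(ii), I would follow Lemma~\ref{lem:reso-setup1}(ii) closely: set $\mathbb{G} := \mathbb{F}^N_\bullet$ and $\mathbb{H} := \mathbb{G}[1]$, and define a graded chain map $f : \mathbb{H} \to \mathbb{G}$ by $f_n := -F_{n+1}$. After unwinding the shift-sign convention, the chain-map identity $d^{\mathbb{G}} \circ f = f \circ d^{\mathbb{H}}$ reduces exactly to $E_n F_{n+1} + F_n E_{n+1} = 0$, i.e., Proposition~\ref{prop:EF+FE=0}; this is the only place where the characteristic-$2$ hypothesis is essential. The mapping cone $\Cone(f)$ then has $n$-th term $A(-n)^{\binom{n+2}{2}} \oplus A(-n)^{\binom{n+2}{2}}$ with differential exactly $D_n$. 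Since $\mathbb{G}$ and $\mathbb{H}$ are both acyclic above degree~$0$, the cone long exact sequence gives $H_i(\Cone(f)) = 0$ for $i \ge 1$, while $H_0(\Cone(f)) = \Coker(D_1) = M$ by inspection of the defining presentation of $M$. Minimality is automatic since every entry of $D_n$ lies in the unique maximal homogeneous ideal of $A$.

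The main obstacle, as in the proof of Lemma~\ref{lem:reso-setup1}, is combinatorial rather than conceptual: matching the iterated tensor product differential in part~(i) to the precise block form of $E_n$, and tracking the sign conventions in the mapping cone of part~(ii) carefully enough that Proposition~\ref{prop:EF+FE=0} supplies the chain-map condition verbatim. The homological content is entirely routine once these identifications are pinned down.
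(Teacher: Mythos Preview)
Your proposal is correct and follows essentially the same approach as the paper: for (i) the paper tensors the periodic resolution of $A/(x)$ with the resolution of $A/(y,z)$ obtained in Lemma~\ref{lem:reso-setup1}(i) (which is itself $\mathcal{C}_y\otimes_A\mathcal{C}_z$), so your triple tensor product is the same construction unwound one step further; for (ii) the paper uses exactly the mapping-cone argument you describe, with $f_n=(-1)F_{n+1}$ and Proposition~\ref{prop:EF+FE=0} supplying the chain-map identity.
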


\begin{proof}
	The proof is almost same as that of Lemma~\ref{lem:reso-setup1}. So we just mention the steps here.
	
	(i) Set $ N_1 := A/(x) $ and $ N_2 := A/(y,z) $. Then
	\begin{align*}
	& \mathbb{F}_{\bullet}^{N_1} : \;\; \cdots \rightarrow A(-2) \xrightarrow{\; x \;} A(-1) \xrightarrow{\; x \;} A \xrightarrow{\; x \;} 0 \quad \mbox{and} \\
	& \mathbb{F}_{\bullet}^{N_2} : \;\; \cdots \stackrel{B_3}{\longrightarrow}A(-2)^3 \stackrel{B_2}{\longrightarrow} A(-1)^2 \stackrel{B_1}{\longrightarrow} A \rightarrow 0
	\end{align*}
	are graded minimal $ A $-free resolutions of $ N_1 $ and $ N_2 $ respectively, where $ \mathbb{F}_{\bullet}^{N_2} $ is obtained as in Lemma~\ref{lem:reso-setup1}(i). Set $ \mathbb{F}_{\bullet} := \mathbb{F}_{\bullet}^{N_1} \otimes_A \mathbb{F}_{\bullet}^{N_2} $. Hence $ H_i(\mathbb{F}_{\bullet}) = \Tor_i^A(N_1,N_2) = 0 $ for all $ i \ge 1 $ (since $ \mathbb{F}_{\bullet}^{N_1} \otimes_A N_2 $ is acyclic). Therefore $ \mathbb{F}_{\bullet} $ is a free resolution of $ N_1 \otimes_A N_2 = A/(x,y,z) = N $. The assertion follows because $ \mathbb{F}_{\bullet} $ is same as the given free resolution $ \mathbb{F}_{\bullet}^N $.
	
	(ii) Set $ \mathbb{G} := \mathbb{F}_{\bullet}^N $ and $ \mathbb{H} := \mathbb{G}[1] $, i.e., $ \mathbb{H}_n = \mathbb{G}_{n+1} $ and $ d_n^{\mathbb{H}} = (-1) d_{n+1}^{\mathbb{G}} $ for every $ n $. We construct a map $ f : \mathbb{H} \to \mathbb{G} $ as follows: the $ n $th component $ f_n : \mathbb{H}_n \to \mathbb{G}_n $ of $ f $ is defined by $ (-1) F_{n+1} $. By virtue of Proposition~\ref{prop:EF+FE=0}, $ f $ is a homogeneous map of chain complexes. As in the proof of Lemma~\ref{lem:reso-setup1}(ii), the mapping cone of $ f $ provides the desired free resolution $ \mathbb{F}_{\bullet}^M $.
\end{proof}

\begin{para}[Computations of $ \Tor_i^A(M,N) $ and $ \Ext_A^i(M,N) $ with Setup~\ref{setup2}]\label{para:example2-tor-ext}
	In view of Lemma~\ref{lem:reso-setup2}(ii),  by considering the complex $ \mathbb{F}_{\bullet}^M \otimes_A N $ as in \ref{para:example1-tor-ext}, we compute that
	\begin{equation}\label{Tor-n-setup2}
		\Tor_n^A(M,N) = \begin{array}{l}
		\Ker\Big( N(-n)^{{n+2 \choose 2}} \xrightarrow{\; F_n \;}  N(-n+1)^{{n+1 \choose 2}}  \Big) \\
		\;\; \bigoplus \\
		\Coker\Big(  N(-n-1)^{{n+3 \choose 2}} \xrightarrow{\; F_{n+1} \;}  N(-n)^{{n+2 \choose 2}} \Big)
		\end{array} \mbox{ for } n \ge 1.
	\end{equation}
	It follows that $ \Tor_n^A(M,N)_{\mu} = 0 $ for every $ \mu < n $, and $ \Tor_n^A(M,N)_n \neq 0 $. Therefore
	\begin{equation}\label{indeg-tor-setup2}
		\indeg\left( \Tor_n^A(M,N)  \right) = n \quad \mbox{for every } \; n \ge 1.
	\end{equation}
	To compute Ext modules, we consider the complex $ \Hom_A(\mathbb{F}_{\bullet}^M,N) $, which yields that
	\begin{equation}\label{Ext-n-setup2}
		\Ext^n_A(M,N) = \begin{array}{l}
		\Coker\Big( N(n-1)^{{n+1 \choose 2}} \xrightarrow{\;\; F^t_n \;\;} N(n)^{{n+2 \choose 2}} \Big) \\
		\;\; \bigoplus \\
		\; \Ker\Big( N(n)^{{n+2 \choose 2}} \xrightarrow{\;\; F^t_{n+1} \;\;} N(n+1)^{{n+3 \choose 2}} \Big)
		\end{array} \mbox{ for } n \ge 1,
	\end{equation}
	where $ F_n^t $ is the transpose of $ F_n $. It follows from \eqref{Ext-n-setup2} that
	\begin{equation}\label{indeg-ext-setup2}
		 \indeg\left( \Ext_A^n(M,N)  \right) = -n \quad \mbox{for every } \; n \ge 1.
	\end{equation}
\end{para}

 In order to compute regularity of $ \Tor_n^A(M,N) $ and $ \Ext^n_A(M,N) $, we interpret the matrix maps $ F_n $ and $ F_n^t $ in different ways.
 
\begin{definition}
	For a ring $ S $, we denote by $ \delta_X : S[X] \to S[X] $ the $ S $-linear map defined by
	\[
		X^a \longmapsto \left\{ \begin{array}{ll}
			X^{a-1} & \mbox{if } a \ge 1, \\
			0 & \mbox{else}.
		\end{array} \right.
	\]
\end{definition}

\begin{para}[Interpretations of $ F_n $ and $ F_n^t $]\label{para:interpret-Fn}
	Set $ R := K[U,V,W] $, polynomial ring over a field $ K $ of characteristic $ 2 $. Consider the sequences of graded $ R $-linear maps (which are not complexes):
	\[
		\mathbb{F}_{\bullet}^U : \quad \cdots \longrightarrow R(-3) \xrightarrow{\; U \;} R(-2) \xrightarrow{\; U \;} R(-1) \xrightarrow{\; U \;} R \longrightarrow 0,
	\]
	similarly $ \mathbb{F}_{\bullet}^V $ and $ \mathbb{F}_{\bullet}^W $. Set $ \mathbb{F}_{\bullet} := \mathbb{F}_{\bullet}^U \otimes_R \mathbb{F}_{\bullet}^V \otimes_R \mathbb{F}_{\bullet}^W $, which can be defined exactly in the same way as tensor product of complexes is defined. In view of Lemma~\ref{lem:reso-setup2}(i) and its proof, the $ n $th map of the sequence $ \mathbb{F}_{\bullet} $ is given by
	\[
		R(-n)^{\binom{n+2}{2}} \xrightarrow{\; F_n \;} R(-n+1)^{\binom{n+1}{2}},
	\]
	where $ F_n $ is obtained from Setup~\ref{setup2} by replacing $ u, v, w $ with $ U, V, W $ respectively. Identifying the free summand $ R(-n) $ corresponding to $ \mathbb{F}^U_{a_1} \otimes \mathbb{F}^{V}_{a_2} \otimes \mathbb{F}^{W}_{a_3} $ with $ R X^{a_1} Y^{a_2} Z^{a_3} \subseteq R[X,Y,Z]_n $, where $a_1+a_2+a_3=n$ and $a_i \ge 0$, one obtains an $ R $-module isomorphism $ R(-n)^{\binom{n+2}{2}} \xrightarrow{\; \cong \;} R[X,Y,Z]_n $. On the other hand, labeling the basis elements of $ R(-n)^{{n+2 \choose 2}} $ by $ e_{(a_1,a_2,a_3)} $, the action of $F_n$ on $e_{(a_1,a_2,a_3)}$ can be described as follows:
	\[
		F_n \left( e_{(a_1,a_2,a_3)} \right) = \epsilon_1 Ue_{(a_1-1,a_2,a_3)}+\epsilon_2 Ve_{(a_1,a_2-1,a_3)}+\epsilon_3 We_{(a_1,a_2,a_3-1)},
	\]
	where $ \epsilon_i = 1 $ if $ a_i \ge 1 $, else $ \epsilon_i = 0 $.  Hence it can be checked that the diagram
	\begin{equation}\label{Fn-nad-delta-comm-diag}
		\xymatrix{
			R(-n)^{\binom{n+2}{2}} \ar[r]^{F_n \quad} \ar[d]_{\cong} & R(-n+1)^{\binom{n+1}{2}} \ar[d]^{\cong} \\
			R[X,Y,Z]_n \ar[r]^{\delta} & R[X,Y,Z]_{n-1}		}
	\end{equation}
	is commutative, where $ \delta := U \delta_X + V \delta_Y + W \delta_Z $, which is an $ R $-linear map. Dualizing the commutative diagram \eqref{Fn-nad-delta-comm-diag}, or dualizing the above notion, one obtains another commutative diagram
	\begin{equation}\label{Fn-trans-nad-delta-comm-diag}
		\xymatrix{
			R(n)^{\binom{n+2}{2}}  & R(n-1)^{\binom{n+1}{2}} \ar[l]_{F_n^t \quad}  \\
			R[X,Y,Z]_n \ar[u]^{\cong} & R[X,Y,Z]_{n-1}	\ar[l]_{\mu_n}	\ar[u]_{\cong} }
	\end{equation}
	where $ \mu_n $ is an $ R $-linear map defined by multiplication with $ UX+VY+WZ $. Since $ \mu_n $ is injective, it follows that the map given by $ F_n^t $ is an injective map.
\end{para}

The origin of the nonlinear behavior of regularity in Example~\ref{exam:reg-Tor-Ext-setup2}(ii) rely on the behavior of coefficient ideals in positive characteristic.

\begin{lemma}\label{lem:coeff-ideals}
	Set $ R := K[U,V,W] $, where $ \Char(K) = 2 $. For every $ n \ge 1 $, let $ \mathcal{B}_n $ be the set of all monomials in $ U, V, W $ which are the coefficients of $ (UX+VY+WZ)^n $, and $ I_n $ be the ideal of $ R $ generated by $ \mathcal{B}_n $. Then $ \reg(R/I_n) = 3(2^l-1) $ if $ 2^l \le n \le 2^{l+1} - 1 $ for some $ l \ge 0 $.
\end{lemma}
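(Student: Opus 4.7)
The plan is to prove the formula by induction on $n$, using a Frobenius-induced product decomposition of $I_n$ that is available in characteristic $2$. Fix $n\ge 1$, set $l := \lfloor \log_2 n \rfloor$, and write $n = 2^l + m$ with $0 \le m \le 2^l - 1$. Since $\Char K = 2$, we have $(UX+VY+WZ)^{2^l} = U^{2^l}X^{2^l} + V^{2^l}Y^{2^l} + W^{2^l}Z^{2^l}$. Multiplying by $(UX+VY+WZ)^m$ and reading off the coefficients of the $X,Y,Z$-monomials, one gets
$$
\mathcal{B}_n \;=\; \{\, T^{2^l}\nu : T \in \{U,V,W\},\ \nu \in \mathcal{B}_m\,\}.
$$
No cancellations occur, because $n < 2^{l+1}$ forces any $X,Y,Z$-monomial of degree $n$ to have at most one exponent $\ge 2^l$, so each resulting $U,V,W$-monomial appears exactly once. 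This yields the key identity $I_n = J \cdot I_m$, where $J := (U^{2^l}, V^{2^l}, W^{2^l})$ (with the convention $I_0 = R$, consistent with $\mathcal{B}_0 = \{1\}$, so that $m=0$ recovers $I_{2^l} = J$).

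Since $\sqrt{I_n} = (U,V,W)$, the ring $R/I_n$ is artinian and its regularity equals the top nonzero degree: $\reg(R/I_n) = \max\{a+b+c : U^a V^b W^c \notin I_n\}$. The factorization $I_n = J \cdot I_m$ translates membership into a recursive criterion: $M = U^a V^b W^c$ lies in $I_n$ iff there exists $T \in \{U,V,W\}$ with $\deg_T(M) \ge 2^l$ such that $M/T^{2^l} \in I_m$. Equivalently, $M \notin I_n$ iff $M/T^{2^l} \notin I_m$ for every $T$ with $\deg_T(M) \ge 2^l$, a condition that is vacuous when all three exponents of $M$ are strictly below $2^l$.

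The base case $n = 1$ is immediate: $R/I_1 = K$ has regularity $0 = 3(2^0 - 1)$. For the inductive step with $n \ge 2$, the monomial $U^{2^l-1}V^{2^l-1}W^{2^l-1}$ meets the vacuous case and so is not in $I_n$, yielding $\reg(R/I_n) \ge 3(2^l - 1)$. Conversely, take $M = U^a V^b W^c \notin I_n$. If all exponents are below $2^l$, then $a+b+c \le 3(2^l-1)$. Otherwise, after relabeling, $a \ge 2^l$, and the criterion forces $U^{a-2^l}V^bW^c \notin I_m$. Since $m \le 2^l - 1$, when $m \ge 1$ we have $\lfloor \log_2 m \rfloor \le l-1$, so the inductive hypothesis gives
$$
(a-2^l) + b + c \;\le\; \reg(R/I_m) \;\le\; 3(2^{l-1}-1),
$$
whence $a+b+c \le 2^l + 3(2^{l-1}-1) = 5\cdot 2^{l-1} - 3 < 3(2^l-1)$. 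The case $m=0$ is direct, as $I_n = J$ is then a monomial complete intersection of regularity $3(2^l-1)$. Combining the two bounds completes the induction.

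The heart of the argument is the Frobenius identity $I_n = J \cdot I_m$; once it is available, the inductive combinatorics is essentially formal, and I do not foresee any real obstacle beyond the routine verification that the $U,V,W$-monomials do not cancel when expanding the product. This is the only place where $\Char K = 2$ enters the argument, and explains why the statement is specific to this characteristic.
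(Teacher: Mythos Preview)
Your proof is correct. Both you and the paper start from the same characteristic-$2$ Frobenius identity, but the inductive schemes and the method of computing $\reg(R/I_n)$ differ. The paper uses the full base-$2$ factorization $I_n=\prod_{i\in S_n} I_{2^i}$ and proceeds by induction on $l$: it applies Frobenius to a minimal free resolution to get $\reg(R/I_{2n})=2\reg(R/I_n)+3$, and handles odd integers via $I_{2n+1}=\mathfrak{m} I_{2n}$ together with the short exact sequence $0\to I_{2n}/\mathfrak{m} I_{2n}\to R/\mathfrak{m} I_{2n}\to R/I_{2n}\to 0$. You instead peel off only the leading bit, writing $I_n=J\cdot I_m$ with $J=(U^{2^l},V^{2^l},W^{2^l})$ and $m=n-2^l$, and then argue purely combinatorially: since $R/I_n$ is artinian, its regularity is the top socle degree, and your recursive membership criterion reduces this to a direct estimate on exponents. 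Your route is more elementary---it avoids any appeal to free resolutions or exact sequences---while the paper's Frobenius-on-resolutions step is a reusable device that would survive in contexts where the ideal is not transparently monomial.
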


\begin{proof}
	Writing $ n $ in base $ 2 $, $ n = \sum_{i \ge 0} a_i 2^i $ with $ a_i \in \{0,1\} $, set $ l := \max\{ i : a_i \neq 0 \} $ and $ S_n := \{ i : a_i \neq 0 \} $. Since $ \Char(K) = 2 $,
	\begin{align*}
		\sum_{U^a V^b W^c \in \mathcal{B}_n} U^a V^b W^c X^a Y^b Z^c & = (UX+VY+WZ)^n  \\
		& = \prod_{i \in S_n} \left(U^{2^i}X^{2^i}+V^{2^i}Y^{2^i}+W^{2^i}Z^{2^i}\right).
	\end{align*}
	Since $ \sum_{0 \le j \le r} 2^j < 2^{r+1} $ for any $ r \ge 0 $, the above equalities show that the map $ \prod_{i \in S_n} \mathcal{B}_{2^i} \to \mathcal{B}_n $ sending a tuple of monomials to their product is a bijection. Therefore $ I_n = \prod_{a_i = 1} I_{2^i} $. It shows that the minimal number of generators of $ I_n $ is $ 3^{|S_n|} $, a fact that we will not use for the proof.
	
	We now use induction on $ l $. Since $ \reg(R/I_1) = 0 $, for $ l = 0 $, the assertion holds. Suppose $ \reg(R/I_n) = 3(2^l-1) $ if $ 2^l \le n \le 2^{l+1} - 1 $ for some $ l \ge 0 $. Since $ R/I_n $ is Artinian, and the regularity is given by the shifts in the last component of the minimal free resolution $ \mathbb{F}_{\bullet}^{R/I_n} $, applying the Frobenius map, we get $ \mathbb{F}_{\bullet}^{R/I_{2n}} $. So
	\begin{equation}\label{even-reg}
		\reg(R/I_{2n}) = 2(\reg(R/I_n) + 3) - 3 = 3(2^{l+1} - 1) \mbox{ if } 2^{l+1} \le 2n \le 2^{l+2} - 2.
	\end{equation}
	Note that $ I_{2n+1} = \mathfrak{m} I_{2n} $, where $ \mathfrak{m} = (U,V,W) $. Considering the exact sequence
	\[
		0 \longrightarrow I_{2n}/\mathfrak{m} I_{2n} \longrightarrow R/\mathfrak{m} I_{2n} \longrightarrow R/I_{2n} \longrightarrow 0,
	\]
	for every $ 2^{l+1}+1 \le 2n+1 \le 2^{l+2} - 1 $,
	\begin{align}
		\reg(R/I_{2n+1}) & = \max\{ \reg(R/I_{2n}), \reg(I_{2n}/\mathfrak{m} I_{2n}) \} \label{odd-reg}\\
		& = \max\{ 3(2^{l+1} - 1), 2n \} = 3(2^{l+1} - 1). \nonumber
	\end{align}
	Thus the assertion for $ l+1 $ follows from \eqref{even-reg} and \eqref{odd-reg}.
\end{proof}

 Using the interpretation of $ F_n $ given in \ref{para:interpret-Fn}, we now prove the following facts.
 
\begin{lemma}\label{lem:facts-Fn}
	Set $ R := K[U,V,W] $, where $ \Char(K) = 2 $. Then the $ R $-linear map $ \Phi_n : R(-n)^{\binom{n+2}{2}}\xrightarrow{\; F_n \;} R(-n+1) ^{\binom{n+1}{2}} $ has the following properties.
	\begin{enumerate}[{\rm (i)}]
		\item For every $n \ge 1$, $ \Coker(\Phi_n) $ is an Artinian $ R $-module.
		\item For every $n \ge 1$, $\reg (\Coker(\Phi_n)) \le \reg (\Coker(\Phi_{n+1})) - 1 $.
		\item $ \reg (\Coker(\Phi_1 \Phi_2 \cdots \Phi_n)) = 3(2^l - 1) $ if $ 2^l \le n \le 2^{l+1} - 1 $ for some $ l \ge 0 $.
		\item $ \reg (\Coker (\Phi_n))= 2(n-1)$ if $ n=2^l-1 $ for some $ l \ge 1 $.
		\item $ \reg(\Coker(\Phi_n)) = 2(2^l-1) + n - 1 $ if $ 2^l \le n \le 2^{l+1} - 1 $ for some $ l \ge 0 $.
	\end{enumerate}
\end{lemma}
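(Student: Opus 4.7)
My strategy throughout is to work with the identification from \ref{para:interpret-Fn} of $\Phi_n$ with the contraction operator $\delta:=U\delta_X+V\delta_Y+W\delta_Z:R[X,Y,Z]_n\to R[X,Y,Z]_{n-1}$; the three divided-power derivatives $\delta_X,\delta_Y,\delta_Z$ commute pairwise and are $R$-linear, hence each commutes with $\delta$ itself. For part (i), I would show $\Coker(\Phi_n)$ is supported only at the maximal ideal $\mathfrak{m}=(U,V,W)$: for any prime $\mathfrak{p}\not\supseteq\mathfrak{m}$, say $U\notin\mathfrak{p}$, the identity
$$\delta(X^{p+1}Y^qZ^r)=UX^pY^qZ^r+V\mathbb{1}_{q\ge1}X^{p+1}Y^{q-1}Z^r+W\mathbb{1}_{r\ge1}X^{p+1}Y^qZ^{r-1}$$
combined with descending induction on $p$ (base case $\delta(X^n)=UX^{n-1}$) lets one express every monomial $X^pY^qZ^r$ with $p+q+r=n-1$ as an element of $\Image(\delta_n)_{\mathfrak{p}}$, since $U$ is a unit there. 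For part (iii), $\Phi_1\cdots\Phi_n$ identifies with $\delta^n$, and the multinomial theorem yields $\delta^n(X^{\alpha_1}Y^{\alpha_2}Z^{\alpha_3})=\binom{n}{\alpha_1,\alpha_2,\alpha_3}U^{\alpha_1}V^{\alpha_2}W^{\alpha_3}$ for $|\alpha|=n$; hence $\Image(\Phi_1\cdots\Phi_n)=I_n$ and Lemma~\ref{lem:coeff-ideals} supplies the regularity.

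For part (ii), take a nonzero class $\bar F\in\Coker(\Phi_n)$ of top degree $r=\reg(\Coker(\Phi_n))$, with lift $F\in R[X,Y,Z]_{n-1}$ of degree $r$. Then $XF\in R[X,Y,Z]_n$ has degree $r+1$. Suppose, for contradiction, that $XF=\delta H$ for some $H\in R[X,Y,Z]_{n+1}$. Applying $\delta_X$ and invoking $\delta_X\delta=\delta\delta_X$ yields $F=\delta_X(XF)=\delta(\delta_X H)\in\Image(\delta_n)$, contradicting $\bar F\ne 0$. Hence $\overline{XF}\ne 0$ in $\Coker(\Phi_{n+1})$, giving $\reg(\Coker(\Phi_{n+1}))\ge r+1$.

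For parts (iv) and (v), iterating (ii) delivers the lower bound $\reg(\Coker(\Phi_m))\ge\reg(\Coker(\Phi_{2^l}))+(m-2^l)$ on each dyadic interval $2^l\le m\le 2^{l+1}-1$; so once the base values $\reg(\Coker(\Phi_{2^l}))=3(2^l-1)$ are secured, the lower bound in (v) follows, with (iv) recovered as the endpoint $m=2^{l+1}-1$. Establishing the matching upper bound is the main obstacle. I expect the argument to use the short exact sequence $0\to R[X,Y,Z]_{n-1}\xrightarrow{\,T\,}R[X,Y,Z]_n\to M_n\to 0$ with $T=UX+VY+WZ$ to identify $\Coker(\Phi_n)\cong\Ext^1_R(M_n,R)$, then invoke local duality over the Cohen--Macaulay ring $R$ to translate the top degree of $\Coker(\Phi_n)$ into the initial degree of $H^2_\mathfrak{m}(M_n)$. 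The Frobenius identity $\delta^{2^l}=U^{2^l}\delta_X^{2^l}+V^{2^l}\delta_Y^{2^l}+W^{2^l}\delta_Z^{2^l}$, valid in characteristic $2$ and underlying the proof of Lemma~\ref{lem:coeff-ideals}, should anchor the regularity at $n=2^l$ to $3(2^l-1)$, from which (ii) propagates the exact formula throughout each dyadic interval.
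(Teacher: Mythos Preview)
Your arguments for (i), (ii), and (iii) are correct. For (i) and (ii) you take different routes than the paper: the paper proves (i) via the Fitting ideal $I(F_n)$, observing that the block structure forces $(U^{\binom{n+1}{2}},V^{\binom{n+1}{2}},W^{\binom{n+1}{2}})\subset I(F_n)$, whereas your localization argument is more direct. For (ii) the paper uses the block decomposition
\[
F_{n+1}=\begin{bmatrix} F_n & 0\\ * & C_{n+1}\end{bmatrix}
\]
to compare surjectivity of $\Psi_n$ and $\Psi_{n+1}$ degree by degree; your commutator identity $\delta_X\delta=\delta\delta_X$ together with $\delta_X(XF)=F$ is a clean and arguably more conceptual substitute. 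Part (iii) is essentially the paper's argument.

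The genuine gap is in (iv) and (v). You correctly identify that (ii) propagates a lower bound across each dyadic interval, but you have not established either the base value $\reg(\Coker(\Phi_{2^l}))\ge 3(2^l-1)$ or any upper bound, and you explicitly flag the latter as ``the main obstacle'' without resolving it. The local-duality reformulation $\Coker(\Phi_n)\cong\Ext^1_R(M_n,R)$ is valid but does not by itself produce a bound; you would still need to control $H^2_\mathfrak{m}(M_n)$, and you give no mechanism for this. The paper handles both issues with concrete devices you are missing. For the lower bound at $n=2^l$: since $g(n):=\reg(\Coker(\Phi_1\cdots\Phi_n))$ jumps at $n=2^l$ by (iii), the composite $\Phi_1\cdots\Phi_{2^l-1}$ is onto in degree $g(2^l)$ while $\Phi_1\cdots\Phi_{2^l}$ is not, forcing $\Phi_{2^l}$ itself to fail surjectivity there; this gives $\reg(\Coker(\Phi_{2^l}))\ge 3(2^l-1)$. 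For the upper bound, the paper proves (iv) directly by writing down an explicit length-three free resolution
\[
0\to R(-2n-1)^{\binom{n+1}{2}}\xrightarrow{F_n^t} R(-2n)^{\binom{n+2}{2}}\xrightarrow{G_n} R(-n)^{\binom{n+2}{2}}\xrightarrow{F_n} R(-n+1)^{\binom{n+1}{2}}\to 0
\]
when $n=2^l-1$, where $G_n$ is the (symmetric) matrix of $\delta^n$ in suitable dual bases; the vanishing $\delta^{n+1}=0$ for $n=2^l-1$ makes this a complex, and Buchsbaum--Eisenbud acyclicity (checking depth of the relevant minor ideals) shows it is exact. This yields $\reg(\Coker(\Phi_{2^l-1}))=2(2^l-2)$ exactly, and then (ii) squeezes the whole interval $[2^l,2^{l+1}-1]$ from both ends to give (v). Your sketch does not supply any replacement for this resolution, so (iv) and the upper half of (v) remain unproved.
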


\begin{proof}
	(i) Let $ I(F_n) $ be the ideal of maximal minors of $F_n$.  By construction of $ F_n $, and changing the role of $ U, V $ and $ W $, one can see that $ \big( U^{\binom{n+1}{2}}, V^{\binom{n+1}{2}}, W^{\binom{n+1}{2}} \big) \subset I(F_n) $. Therefore the assertion follows from the fact that $ \Supp(\Coker(\Phi_n)) \subseteq \Supp(R/I(F_n)) $, which is shown in \cite[20.4 and 20.7.a]{Eis95}. 
	
	(ii) By virtue of (i), $ \reg(\Coker(\Phi_n)) $ is the smallest number $ r $ such that $ \Phi_n $ is surjective on the graded components $ \ge r + 1 $. Set $ \Psi_n : R(-1)^{\binom{n+2}{2}} \xrightarrow{\; F_n \;} R^{\binom{n+1}{2}} $, which is same as $ \Phi_n $ but the grading is shifted by $ n - 1 $. So $ \reg(\Coker(\Phi_n)) = \reg(\Coker(\Psi_n)) + n - 1 $. It can be derived from
	\[
		F_{n+1} = \left[\begin{array}{cccc|c}
			&	 		&  	F_n  & 				& 0 \\ \hline
		0	& \cdots & 0 	& U I_{n+1} & C_{n+1}
		\end{array} \right] : {\begin{array}{lll}
			R(-1)^{\binom{n+2}{2}} & \xrightarrow{F_n} & R^{\binom{n+1}{2}} \\
			\;\; \bigoplus & \searrow & \;\bigoplus \\
			R(-1)^{n+2} & \longrightarrow & R^{n+1}
			\end{array}}
	\]
	that $ \reg(\Coker(\Psi_n)) \le \reg(\Coker(\Psi_{n+1})) $, and hence 
	\[
		\reg(\Coker(\Phi_n)) \le \reg(\Coker(\Phi_{n+1})) - 1.
	\]
	
	(iii) In view of the diagram \eqref{Fn-nad-delta-comm-diag}, the composition $ \Phi_1 \Phi_2 \cdots \Phi_n $ can be interpreted by the map $ \delta^n : R[X,Y,Z]_n \longrightarrow R $, where $ \delta = U \delta_X + V \delta_Y + W \delta_Z $. Therefore, by the dual diagram \eqref{Fn-trans-nad-delta-comm-diag}, $ \Image(\Phi_1 \Phi_2 \cdots \Phi_n) $ is equal to the coefficient ideal of $ (U X + V Y + W Z)^n $.
	Hence the result follows from Lemma~\ref{lem:coeff-ideals}.
	
	(iv) Let $ \mathcal{B}'_n := \left\lbrace m_1,\dots, m_{{n+2 \choose 2}} \right\rbrace $ be the set of monomial generators of $ R[X,Y,Z]_n $ ordered by lex with $ X \succ Y \succ Z $. Let $ \mathcal{A}_n $ be the $R$-submodule of $ R(X,Y,Z) $ generated by the ordered set
	\[
		\mathcal{B}''_n := \left\lbrace \dfrac{X^nY^nZ^n}{m_i} : 1 \le i \le {n+2 \choose 2} \right\rbrace.
	\]
	Clearly, $ R[X,Y,Z]_n $ and $ \mathcal{A}_n $ both are free $ R $-modules of same rank with ordered bases $ \mathcal{B}'_n $ and $ \mathcal{B}''_n $ respectively. Consider the $ R $-linear map $ \delta^n : \mathcal{A}_n \rightarrow R[X,Y,Z]_{n} $ defined by acting $ \delta^n $ on the basis elements of $ \mathcal{A}_n $, where $ \delta = U \delta_X + V \delta_Y + W \delta_Z $. Let $ G_n $ be the matrix representation of $ \delta^n $ with respect to the described bases. Thus we have a commutative diagram
	\begin{equation}\label{comm-diag-Gn-Fn}
		\xymatrix{
		R(-2n)^{\binom{n+2}{2}} \ar[r]^{G_n} \ar[d]^{\cong} & R(-n)^{\binom{n+2}{2}} \ar[r]^{F_n} \ar[d]^{\cong} & R(-n+1)^{\binom{n+1}{2}} \ar[d]^{\cong} \\
		\mathcal{A}_n \ar[r]^{\delta^n} & R[X,Y,Z]_n \ar[r]^{\delta} & R[X,Y,Z]_{n-1}.
	}
	\end{equation}
	Since $ n = 2^l - 1 $, the composition $ \delta^{n+1} : \mathcal{A}_n \rightarrow R[X,Y,Z]_{n-1}$ is a zero map. It follows that the top row of \eqref{comm-diag-Gn-Fn} is also a complex.
	Writing $ m_i = X^{a_{i_1}} Y^{a_{i_2}} Z^{a_{i_3}} $ for $ 1 \le i \le {n+2 \choose 2} $, the matrix $ G_n $ can be expressed as
	\begin{equation*}
		(G_n)_{(i,j)} = \epsilon_{(i,j)} U^{n-a_{i_1}-a_{j_1}} V^{n-a_{i_2}-a_{j_2}} W^{n-a_{i_3}-a_{j_3}},
	\end{equation*}
	where $ \epsilon_{(i,j)} = 1 $ if $ U^{n-a_{i_1}-a_{j_1}} V^{n-a_{i_2}-a_{j_2}} W^{n-a_{i_3}-a_{j_3}} \in \mathcal{B}_n $ as defined in Lemma~\ref{lem:coeff-ideals}, and  $ \epsilon_{(i,j)} = 0 $ else. Therefore $ G_n $ is a symmetric matrix. Hence
	\begin{align}
		0 \longrightarrow R(-2n-1)^{\binom{n+1}{2}} & \xrightarrow{\; F_n^t \;} R(-2n)^{\binom{n+2}{2}} \label{res-coker-Fn}\\
		& \xrightarrow{\; G_n \;} R(-n)^{\binom{n+2}{2}} \xrightarrow{\; F_n \;} R(-n+1)^{\binom{n+1}{2}} \longrightarrow 0\nonumber
	\end{align}
	is a complex. Note that the ideal of maximal minors of $ F_n $ has depth $ 3 $. On the other hand, choosing the $ (n+1) $ rows and columns of $ G_n $ indexed by
	\[
		\left\{ X^{n-i} Y^i : 0 \le i \le n \right\} \quad \mbox{and} \quad \left\{ X^n Y^n Z^n/X^{n-j} Y^j : 0 \le j \le n \right\} 
	\]
	respectively, the corresponding submatrix is antidiagonal with entries $ W^n $ on the antidiagonal. Similarly, one may consider suitable minors for $ U $ and $ V $. Thus the ideal $ I_{n+1}(G_n) $ of all $ (n+1) $ minors of $ G_n $ contains pure powers of $ U $, $ V $ and $ W $. So $ \depth(I_{n+1}(G_n),R) = 3 $. Therefore, by Buchsbaum-Eisenbud acyclicity criterion \cite[Thm. 1.4.13]{BH98}, \eqref{res-coker-Fn} is acyclic. So $ \reg(\Coker(\Phi_n)) = 2(n-1) $.
	
	(v) Set $ g(n) := \reg (\Coker(\Phi_1 \Phi_2 \cdots \Phi_n)) $. It follows from (i) that every $ \Phi_n $ is surjective on all high enough graded components. Let $ n = 2^l $. Then, by (iii), $ g(n) > g(n-1) $, which implies that the component $ [\Phi_1 \Phi_2 \cdots \Phi_{n-1}]_{g(n)} $ is onto, but $ [\Phi_1 \Phi_2 \cdots \Phi_n]_{g(n)} $ is not onto. Therefore $ [\Phi_n]_{g(n)} $ is not onto, and hence $ \reg(\Coker(\Phi_n)) \ge g(n) = 3(n-1) $ by (iii). Along with this inequality,  the statements (ii) and (iv) yield that
	\begin{align*}
		3(n - 1) & \le \reg(\Coker(\Phi_n)) \\
		& \le \reg(\Coker(\Phi_{n+1})) - 1 \nonumber \\ 
		& \;\; \vdots  \nonumber \\
		& \le \reg(\Coker(\Phi_{2n - 1})) - (n-1) \nonumber \\
		& =  2(2n - 2) - (n-1) = 3(n-1).\nonumber
	\end{align*}
	Therefore all the above inequalities must be equalities, and it follows that
	\[
		\reg(\Coker(\Phi_m)) = 2(2^l-1) + m - 1 \mbox{ if } 2^l \le m \le 2^{l+1} - 1 \mbox{ for some } l \ge 0.
	\]
\end{proof}

	With all the ingredients in Lemma~\ref{lem:facts-Fn}, we are now able to compute the regularity of Ext and Tor modules in Example~\ref{exam:reg-Tor-Ext-setup2}.

\begin{proof}[Proof of Example~\ref{exam:reg-Tor-Ext-setup2}]
	The expressions for $ \indeg $ are shown in \eqref{indeg-tor-setup2} and \eqref{indeg-ext-setup2}. In view of \eqref{Tor-n-setup2} and \eqref{Ext-n-setup2}, it requires to compute the regularity of  kernel and cokernel of the following maps :
	\begin{align}
		\Phi_n & :=  N(-n)^{{n+2 \choose 2}} \xrightarrow{\;\; F_n \;\;}  N(-n+1)^{{n+1 \choose 2}} \quad \mbox{and} \label{Phi-n-Psi-n-Fn} \\ 
		\Psi_n & := N(n-1)^{{n+1 \choose 2}} \xrightarrow{\;\; F^t_n \;\;} N(n)^{{n+2 \choose 2}} \nonumber
	\end{align}
	for all $ n \ge 1 $. Since $ N $ is annihilated by $ (x,y,z) $, we can substitute $ N $ with $ R:= K[U,V,W] $, and the entries $ u,v,w $ in the matrices $ F_n $ with $ U, V, W $ respectively.
	
	(i) By the observations made in \ref{para:interpret-Fn}, the complex
	\[
		0 \longrightarrow R(n-1)^{{n+1 \choose 2}} \xrightarrow{\; F^t_n \;} R(n)^{{n+2 \choose 2}} \longrightarrow 0
	\]
	is acyclic, and it provides a graded minimal $ R $-free resolution of $ \Coker(\Psi_n) $. Therefore, $ \Ker(\Psi_n) = 0 $ and $ \reg(\Coker(\Psi_n)) = -n $ for every $ n \ge 1 $. Hence the assertion follows from \eqref{Ext-n-setup2}.
	
	(ii) It follows from the Koszul complex of $ U, V, W $ over $ R $ that regularities of $ \Coker(\Phi_1) $ and $ \Ker(\Phi_1) $ are $ 0 $ and $ 2 $ respectively. So we need to focus on $ n \ge 2 $. By virtue of Lemma~\ref{lem:facts-Fn}(v),
	\begin{equation}\label{reg-coker-phi-n}
		\reg(\Coker(\Phi_n)) = 2(2^l-1) + n - 1 \quad \mbox{if } 2^l \le n \le 2^{l+1} - 1.
	\end{equation}
	Thus, for every $ n \ge 2 $, since $ \reg(\Coker(\Phi_n)) > n - 1 $, in view of \eqref{Phi-n-Psi-n-Fn},
	\begin{equation}\label{coker-ker-Phi-n}
		\reg(\Coker(\Phi_n)) = \max\{ n - 1, \reg(\Ker(\Phi_n)) - 2 \} = \reg(\Ker(\Phi_n)) - 2.
	\end{equation}
	Therefore \eqref{reg-coker-phi-n} and \eqref{coker-ker-Phi-n} yield that
	\begin{equation}\label{reg-ker-Phi-n}
		\reg(\Ker(\Phi_n)) = 2(2^l-1) + n + 1 \quad \mbox{if } 2^l \le n \le 2^{l+1} - 1.
	\end{equation}
	It follows from \eqref{Tor-n-setup2}, \eqref{reg-coker-phi-n} and \eqref{reg-ker-Phi-n} that
	\begin{align*}
		& \reg\left( \Tor_n^A(M,N)  \right) = \max\left\{ \reg(\Ker(\Phi_n)), \reg(\Coker(\Phi_{n+1})) \right\} =\\
		& \left\{ \hspace{-0.2cm}\begin{array}{ll}
		\max\left\{ 2(2^l-1) + n + 1,  2(2^l-1) + n \right\} = 2^{l+1} - 1 + n & \mbox{if } 2^l \le n \le 2^{l+1} - 2\\
		\max\left\{ 2(2^l-1) + n + 1,  2(2^{l+1}-1) + n \right\} = 2^{l+2} - 2 + n & \mbox{if } n = 2^{l+1} - 1.
		\end{array}\right. \nonumber
	\end{align*}
	Hence, computing $ \reg(\Tor_1^A(M,N)) = 3 $ separately, the assertion follows.
\end{proof}

\begin{remark}\label{rmk:never-fg-tor}
	Note that by \eqref{Tor-n-setup2} and Lemma~\ref{lem:facts-Fn}(i),
	\[
		H_{A_+}^0 \hspace{-0.15cm}\left( \Tor_{\star}^A(M,N)\right)\hspace{-0.1cm}^{\vee} = \bigoplus_{n \ge 1} (\Coker(\Phi_n))^{\vee}.
	\]
	Hence Lemma~\ref{lem:facts-Fn}(i) and (v) yield that
	\[
		\indeg\left( H_{A_+}^0 \hspace{-0.15cm}\left( \Tor_{n}^A(M,N)\right)\hspace{-0.1cm}^{\vee} \right) = - 2(2^l-1) - n + 1 \mbox{ if } 2^l \le n \le 2^{l+1} - 1.
	\]
	Therefore, by Proposition~\ref{prop:BCH}, one cannot make $ H_{A_+}^0 \hspace{-0.15cm}\left( \Tor_{\star}^A(M,N)\right)\hspace{-0.1cm}^{\vee} $ 
	a finitely generated module over any Noetherian $ \mathbb{Z} $-graded algebra $ A[z_1,\ldots,z_r] $.
\end{remark}

\section*{Acknowledgments}
	
	This work was done during the three months postdoctoral visit of the second named author. He would like to thank LIA Indo-French CNRS Program in Mathematics for their financial support. Computations with Macaulay2 \cite{M2} helped us to find Examples~\ref{exam:reg-Tor-Ext-setup1} and \ref{exam:reg-Tor-Ext-setup2}.
	
\bibliographystyle{acm} 
\bibliography{CGN-v22-final}

\end{document}